\def\+{\oplus}
\newcommand{\dt}{\Delta t}
\newcommand{\R}{{\mathbb R}}
\newcommand{\ch}{{\mathcal H}}
\newcommand{\cI}{{\mathcal I}}
\newcommand{\cJ}{{\mathcal J}}
\newcommand{\cC}{{\mathcal C}}
\newcommand{\cH}{{\mathcal H}}
\newcommand{\cT}{{\mathcal T}}
\newcommand{\cK}{{\mathcal K}}
\newcommand{\diver}{{\rm{div}}}
\newcommand{\T}{{\mathbb{T}}}
\def\vfi{\varphi}
\def\de{\delta}
\def\la{\lambda}
\def\rife#1{(\ref{#1})}
\newcommand{\ds}{\displaystyle}
\newtheorem{remark}{\textbf{Remark}}
\title{Convergence of a finite difference scheme to weak solutions of the system of partial differential equation arising
 in  mean field games}
 \author{Yves Achdou \thanks { Univ. Paris Diderot, Sorbonne Paris Cit{\'e}, Laboratoire Jacques-Louis Lions, UMR 7598, UPMC, CNRS, F-75205 Paris, France. 
  achdou@ljll-univ-paris-diderot.fr} \and
 Alessio Porretta  \thanks{ Universit{\`a} di Roma Tor Vergata,
Dipartimento di Matematica,
Via della Ricerca Scientifica 1,
00133 Roma, Italy. porretta@axp.mat.uniroma2.it
}
 }
\begin{document}

\maketitle
\begin{abstract}
Mean field type models describing the limiting behavior of stochastic differential games as the 
number of players tends to $+\infty$, have been recently introduced by J-M. Lasry and P-L. Lions.
Under suitable assumptions, they lead to a system of two coupled partial differential equations,
 a forward Bellman equation and a backward Fokker-Planck equations.
Finite difference schemes  for the approximation of such systems have been proposed in previous works. 
Here, we prove the convergence of these schemes towards a  weak solution of the system  of partial differential equations.  
\end{abstract}

 \begin{keywords}
 Mean field games, finite difference schemes, convergence.
 \end{keywords}

 \begin{AMS}
 65M06,65M012,9108,91A23,49L25
 \end{AMS}

\section {Introduction}
Mean field type models describing the asymptotic behavior of  stochastic differential games (Nash equilibria) as the number of players
tends to $+\infty$ have recently been introduced by J-M. Lasry and P-L. Lions
 \cite{MR2269875,MR2271747,MR2295621}, and termed {\sl mean field games} by the same authors.
 Related ideas have been developed independently in the engineering
literature  by Huang-Caines-Malham{\'e}, see for example \cite{MR2352434}. For brevity, the acronym {\sl MFG} will sometimes be used for {\sl mean field games}.
Examples of MFG models with applications in economics and social sciences are proposed in \cite{MR2762362,ABLLM14}.
\\
\\  The simplest MFG model lead
 to systems of  evolutive partial differential equations involving  two unknown scalar functions: the density of the agents in a given state $x \in \R^d$, 
namely  $m=m(t,x)$ and the value function  $u=u(t,x)$.
 The present work is devoted to  finite difference schemes for the systems of partial differential equations. 
 Although the methods and the theoretical results obtained below can be easily  generalized,
 the present work focuses on the two-dimensional case for the following reasons:
1) the one dimensional case is easier and allows too special arguments; 2) in dimension two,  the description of the discrete methods discussed below remain fairly simple.
 Besides, several important applications of the mean field games theory are two-dimensional, in particular those related to crowd dynamics.
\\ \\
In the state-periodic setting, typical MFG  model comprises  the following system of  partial differential equations in $(0,T)\times  \T^2$
  \begin{eqnarray}
  \label{eq:1}
&\frac {\partial  u}{\partial t} (t,x) -\nu \Delta  u  (t,x) + H(x,\nabla  u (t,x))  = 
 F(m(t,x)), \\
\label{eq:2}
& \frac {\partial  m}{\partial t}  (t,x)+\nu \Delta  m (t,x) +\diver\left(  m (t,x) 
\frac {\partial H} {\partial p} (x,\nabla  u(t,x) )\right)   = 0 ,
\end{eqnarray}with the initial and terminal conditions
\begin{equation}
  \label{eq:3} u(0,x)=u_0(x),\quad  m(T,x) = m_T(x),\quad \hbox{in } \T^2,
\end{equation}
given a  cost function $u_0$ and a probability density  $m_T$.
\\
Here, we denote by $\T^2=[0,1]^2$ the $2-$dimensional unit torus,
 and by $\Delta$, $\nabla$ and $\diver$, respectively, the Laplace, the gradient
and the divergence operator acting on the state variable $x$.
The parameter $\nu$ is the diffusion coefficient.
Hereafter, we will always assume that $\nu>0$. 
The system also involves the scalar Hamiltonian $H(x,p)$, 
which is assumed to be continuous, convex  and  $\cC^1$ regular with respect to $p$.
 The notation $\frac{\partial H}{\partial p}(x,q)$ is used for the gradient of $p\mapsto H(x,p)$ at $p=q$. Finally, in  the term  $F(m(t,x))$,  $F$ is a continuous real valued function defined on $\R_+$.    Hereafter the notation  $Q$ will be used for the space-time cylinder $(0,T)\times \T^2$.
\\
\\
We have chosen to focus on the case when the cost $u_0$  depends directly on $x$. In some realistic situations, 
the final cost may depend on the density of the players, i.e. $u_{|t=0}= \Phi_0[m_{|t=0}](x)$, where 
 $\Phi_0$ is an operator acting on probability densities, which may be local or not.  We will not tackle this aspect, in order to keep the discussion as simple as possible. 
Similarly, by working on the torus $\T^2$, we avoid the discussion of the boundary conditions,  but other boundary value problems with for example Dirichlet or Neumann conditions could be considered.  It is also possible to consider different initial conditions than in (\ref{eq:3}): if there is a condition of the type $m(t=0,\cdot)=m_0$ instead of $u(t=0,\cdot)=u_0$, then the system models a planning problem, see \cite{PLL} for a description of the model and mathematical results, and  \cite{MR3090129,MR3195848} for new existence and uniqueness results. 
\\
System (\ref{eq:1})-(\ref{eq:2}) consists then of a forward Bellman equation coupled with a backward Fokker-Planck equation. The forward-backward structure is an important feature of  this system, which makes it necessary to design new strategies for its mathematical analysis (see \cite{MR2271747,MR2295621}) and  for numerical approximation.
The main results on the mathematical analysis of  (\ref{eq:1})-(\ref{eq:2}) are contained in the pioneering articles \cite{MR2271747,MR2295621}, but 
many important aspects  of the theory developed by  J-M. Lasry and P-L. Lions on MFG are not published in journals or books. They  can nevertheless be found in the videos of  the lectures of P-L. Lions (in French) at Coll{\`e}ge de France:  see \cite{PLL}.  A very good introduction is also given in the notes by P. Cardaliaguet, \cite{cardaliaguet2010}, with a special emphasis on the deterministic case, i.e. $\nu=0$ in (\ref{eq:1})-(\ref{eq:2}). The survey of Gomes et al \cite{MR3195844} also addresses  interesting extensions of the model, 
and the so-called {\sl master equation} first introduced in \cite{PLL}. 

 Depending on the data and on $F$ and $H$, different notions of solutions can be relevant for (\ref{eq:1})-(\ref{eq:3}): indeed, if the right hand side of (\ref{eq:1}) is replaced by $\Phi[m(t,\cdot)](x)$ where $\Phi$ is a nonlocal smoothing operator,  mapping probability measures on $\T^2$ to  $\cC ^1$ functions, if $H$ depends smoothly on $x$ and if the data $u_0$ and $m_T$ are smooth, then classical solutions can be found, see  \cite{MR2271747,MR2295621}.
The same is true if e.g. $H$ is Lipschitz continuous w.r.t. its second argument $p$ and $F$ in (\ref{eq:1}) is a continuous function.  The situation is different in the case when $H$ has a strictly superlinear growth with respect to $p$ and $F$ is a continuous function: in this case, one has to look for weak solutions, see \cite{MR2295621} and the recent article \cite{MR3305653} which is devoted to  weak solutions to Fokker-Planck equations and to the system (\ref{eq:1})-(\ref{eq:2}).
\\
 Since the (semi-)analytic solutions of the MFG system do not exist in general,   any attempt to apply MFG models and to  get  qualitative/quantitative information from them
 must rely on numerical simulations and scientific computing. 
Therefore,  the research has also been active on numerical methods for approximating (\ref{eq:1})-(\ref{eq:3}):
 a numerical method based  on the reformulation of the model as an optimal control problem 
for the Fokker-Planck equation  with an application in economics was proposed in  \cite{MR2647032}. Discrete time, finite state space mean  field games
were discussed in  \cite{MR2601334}. We also refer to \cite{MR2974160,MR2928382}
 for a specific constructive approach when the Hamiltonian is quadratic.
 Finally,   semi-Lagrangian approximations have been studied in \cite{MR3148086,CS2014}.\\
 The finite difference method described and studied  below has first been proposed and discussed in \cite{MR2679575,MR2888257}.
It will be reviewed in \S~\ref{sec:finite-diff-schem}. The numerical scheme  basically relies on monotone approximations of the Hamiltonian and on a suitable weak formulation of the Fokker-Planck equation. It has several important features:  
\\
$\phantom{aaa}\bullet$ existence and uniqueness for the discretized problems can be obtained by similar arguments as those used in the continuous case,\\
$\phantom{aaa}\bullet$ they are robust when $\nu\to 0$ (the deterministic limit of the models),\\
$\phantom{aaa}\bullet$ bounds on the solutions, which are uniform in the grid step, can be proved under reasonable assumptions on the data.\\
A first result on  the convergence to classical solutions was given in \cite{MR2679575}. The issue of convergence  was studied with more details in \cite{MR3097034,MR3135339}:
in these works, the starting point/assumption was the existence of a classical solution of (\ref{eq:1})-(\ref{eq:3}).  The proof of convergence mainly consisted in plugging the classical solution into the system of equations arising from the finite difference method, and use the consistency and stability properties of the scheme in order to get estimates and pass to the limit.
\\
In the present work, the goal is different: we wish to prove that as the grid steps tend to zero, the solution of the discretized MFG system converges to a weak solution of (\ref{eq:1})-(\ref{eq:3}), without assuming the existence of the latter;  so this work  will supply as a by-product a new strategy for proving the existence of weak solutions. One key step will be to obtain a priori estimates on the solutions to the discrete systems, and these will mainly come from the fact that the structure of the MFG system is preserved by the chosen finite difference method. This step will be achieved in \S~\ref{sec:priori-estimates}.
Note that \S~\ref{sec:priori-estim-discr} is concerned with a priori estimates for the discrete version of the Fokker-Planck equation (\ref{eq:2}). These estimates may have their own interest, independently from MFG models, and  may be put in relation with recent works of Gallou{\"e}t et al \cite{MR2904585} in the context of finite volume methods. Once these estimates are obtained, the most important difficulty will be to pass to the limit in the discrete Bellman equation. The strategy to that purpose is to first prove some $L^1 $ compactness of the sequence of state-gradients of the discrete solutions,   then to adapt some techniques proposed by Boccardo, Murat and Puel, see \cite{MR766873}, for studying weak solutions of (\ref{eq:1}). This will done in \S~\ref{l1-comp} and \ref{sec:dis-to-con}.  Here also, we think that the passage to the limit in the discrete Bellman equation may have an interest for  itself.

 \section{Finite difference schemes}
\label{sec:finite-diff-schem}
In the present paragraph, we discuss the finite difference method originally proposed in \cite{MR2679575}. 
\\
 Let $N_T$ be a positive integer and $\dt=T/{N_T}$, $t_n= n \dt$, $n=0,\dots, N_T$.
 Let $\T^2_h$ be a uniform grid on the torus with mesh step $h$, (assuming that $1/{h}$ is an integer $N_h$),
 and $x_{ij}$ denote a generic point in  $\T^2_h$.
The values of $u$ and $m$  at $(x_{i,j},t_n)$ are respectively approximated by $u^n_{i,j}$ and $m^n_{i,j}$. 
Let $u^n$ (resp. $m^n$) be the vector containing the values $u^n_{i,j}$  (resp. $m^n_{i,j}$), for  $0\le i,j< N_h$ indexed in the lexicographic order.
Hereafter, such vectors will be termed {\sl grid functions  on $\T_{h}^2$} or simply grid functions.
For all grid functions $z$, all  $i$ and $j$, we agree that $z_{i,j}= z_{(i \hbox{ mod } N_h), (j \hbox{ mod }N_h)}$.
\paragraph{Elementary finite difference operators}
Let us introduce the elementary  finite difference operators 
  \begin{equation}
\label{eq:8}
 (D_1^+ u )_{i,j} = \frac{ u_{i+1,j}-u_{i,j}   } {h} \quad \hbox{and }\quad  (D_2^+ u )_{i,j} = \frac{ u_{i,j+1}-u_{i,j}   } {h},
\end{equation}
and define $ D_h u$ as the grid function with values in $\R^2$: 
\begin{equation}
  \label{eq:36}
 (D_h u)_{i,j} =\Bigl( (D_1^+ u )_{i,j}, (D_2^+ u )_{i,j} \Bigr) \in \R^2.
\end{equation}
Let
$[\nabla_h u]_{i,j}$ be the collection of the four possible one sided finite differences at $x_{i,j}$:
\begin{equation}
  \label{eq:9}
 [\nabla_h u]_{i,j} =\Bigl((D_1^+ u )_{i,j} , (D_1^+ u )_{i-1,j}, (D_2^+ u )_{i,j}, (D_2^+ u )_{i,j-1}\Bigr) \in \R^4.
\end{equation}
We will also need the standard five point discrete Laplace operator 
\begin{displaymath}
(\Delta_h u)_{i,j}=  -\frac 1 {h^2} (4u_{i,j} -u_{i+1,j}-u_{i-1,j}-u_{i,j+1}-u_{i,j-1}).  
\end{displaymath}
For a set $v= (v^n)_{n=0,\dots,N_T}$,
where $v^n$ is grid functions  on $\T_{h}^2$,
 it will be convenient to define the family of grid functions:
 \begin{equation}
   \label{eq:35}
\partial_{t,\dt} v\equiv  \left(\frac {v^{n+1}-v^n} {\dt}\right)_{n=0,\dots, N_T-1}.
 \end{equation}
\paragraph{Numerical Hamiltonian}
In order to approximate the term $H(x, \nabla u)$ in (\ref{eq:1}), we consider a  numerical Hamiltonian $g: \T^2 \times \R^4\to \R$,  $(x,q_1,q_2,q_3,q_4)\mapsto g\left(x,q_1,q_2,q_3,q_4\right)$.
Hereafter we will often assume that the following conditions hold:
\begin{description}
\item ($\mathbf{g_1}$)  \emph{monotonicity}: $g$ is nonincreasing with respect to $q_1$ and $q_3$  and nondecreasing with respect to $q_2$ and $q_4$.
\item ($\mathbf{g_2}$)  \emph{consistency:}
  $g\left(x,q_1,q_1,q_2,q_2\right)=H(x,q), \quad \forall x\in \T^2, \forall q=(q_1,q_2)\in \R^2. $
\item ($\mathbf{g_3}$) \emph{regularity}: $g$ is  continuous and of class $\cC^1$ w.r.t.   $(q_1,q_2,q_3,q_4)$.
\item ($\mathbf{g_4}$)  \emph{convexity} : $(q_1,q_2,q_3,q_4)\mapsto g\left(x,q_1,q_2,q_3,q_4\right)$ is convex.
\end{description}
We will approximate $H(\cdot, \nabla u) (x_{i,j})$ by $g(x_{i,j}, [\nabla_h u]_{i,j} )$.\\
 Standard examples of numerical Hamiltonians fulfilling these requirements are provided by  Lax-Friedrichs or upwind schemes, see \cite{MR2679575}. 
For Hamiltonians of the form  $H(x,p)= \ch(x) + |p|^\beta$, $\beta\in (1,\infty)$,  we may choose 
\begin{equation}\label{eq:10}
  g(x,q)= \ch(x)+ G(q_1^-,q_2^+, q_3^-, q_4^+),
\end{equation}
where, for a real number $r$, $r^+=\max(r,0)$ and $r^-=\max(-r,0)$ and where
$G:(\R_+)^4\to \R_+$ is given by
\begin{equation}\label{eq:11}
  G(p)= |p|^{\beta}=(p_1^2+p_2^2+p_3^2+p_4^2)^{\frac \beta 2}.
\end{equation}
\paragraph{Discrete Bellman equation}
The discrete version of the Bellman equation is obtained by applying a semi-implicit Euler scheme to (\ref{eq:1}),  
\begin{equation}\label{eq:18}
\ds   \frac {u^{n+1}_{i,j}- u^{n}_{i,j}} {\dt}  -\nu (\Delta_h u^{n+1})_{i,j} +
 g(x_{i,j}, \left[\nabla_h u^{n+1}\right]_{i,j} )=   F(m^{n}_{i,j}),
\end{equation}
for all points in $\T_h^2$ and all $n$, $0\le n < N_T$, where all the discrete operators have been introduced above. 
Given $(m^n)_{n=0,\dots, N_T-1}$, 
(\ref{eq:18}) and the initial condition $u_{i,j}^0= u_0(x_{i,j})$ for all $(i,j)$ completely characterizes  $(u^n)_{0\le n\le N_T}$.
\paragraph{Discrete Fokker-Planck equation}
In order to approximate equation \eqref{eq:2}, it is convenient to  consider its weak formulation which involves in particular the term
\[\ds \int_{\T^2} \diver\left(m \frac {\partial H}{\partial p}(x,\nabla u) \right)  w(x)\, dx.\]
By periodicity, 
\[\ds  \int_{\T^2} \diver\left(m \frac {\partial H}{\partial p}(x,\nabla u) \right)   w(x)\, dx= - \int_{\T^2} m(x) \frac {\partial H}{\partial p}(x,\nabla u(x)) \cdot \nabla w(x)\, dx\]
holds for any test function $w$. The right hand side in the identity above  will be approximated by
\[ - h^2\sum_{i,j}    m_{i,j} \nabla_q g (x_{i,j},[\nabla_h u]_{i,j}) \cdot [\nabla_h w]_{i,j}
= h^2 \sum_{i,j} \cT_{i,j}(u,m) w_{i,j},\]
where the  transport  operator $\cT$ is defined as follows:
\begin{equation}
\label{eq:19}
  \begin{split}
  &  \cT_{i,j}(u,m)= \\ & \frac 1 h\left(
  \begin{array}[c]{l}
\ds
\left(
  \begin{array}[c]{l}
   \ds  m_{i,j}  \frac {\partial g} {\partial q_1} (x_{i,j}, [\nabla_h u]_{i,j})
     - m_{i-1,j}  \frac {\partial g} {\partial q_1}(x_{i-1,j}, [\nabla_h u]_{i-1,j}) \\ \ds
     +m_{i+1,j}  \frac {\partial g} {\partial q_2} (x_{i+1,j},[\nabla_h u]_{i+1,j})
-   \ds  m_{i,j}  \frac {\partial g} {\partial q_2} (x_{i,j}, [\nabla_h u]_{i,j})
  \end{array}
\right)
\\ +\\
\ds 
\left(
  \begin{array}[c]{l}
\ds  m_{i,j}  \frac {\partial g} {\partial q_3} (x_{i,j}, [\nabla_h u]_{i,j})
-  \ds  m_{i,j-1}\frac {\partial g} {\partial q_3} (x_{i,j-1}, [\nabla_h u]_{i,j-1}) \\
+ \ds  m_{i,j+1}\frac {\partial g} {\partial q_4} (x_{i,j+1}, [\nabla_h u]_{i,j+1})
-  m_{i,j}  \frac {\partial g} {\partial q_4} (x_{i,j}, [\nabla_h u]_{i,j})
  \end{array}
\right)
  \end{array}
\right).
  \end{split}
\end{equation}
The discrete version of equation (\ref{eq:2}) is chosen as follows:
\begin{equation}
\label{eq:20}
\ds   \frac {m^{n+1}_{i,j}- m^{n}_{i,j}} {\dt} +\nu (\Delta_h m^{n})_{i,j}
+\cT_{i,j}(u^{n+1},m^n)= 0,
\end{equation}
for all $n=0,\dots, N_T-1$.
This scheme is implicit w.r.t. to $m$ and explicit w.r.t. $u$ because the considered Fokker-Planck equation is backward.
Given $u$ this is a system of linear equations for $m$.
We introduce the compact and convex set
\begin{equation}
\label{eq:12}
    \cK_h=\{ (m_{i,j})_{ 0\le i,j <N_h}:  h^2 \sum_{i,j} m_{i,j}=1;\quad   m_{i,j}\ge 0 \}
\end{equation}
which can be viewed as the set of the discrete probability measures.
It is easy to see that if $m^n$ satisfies (\ref{eq:20}) for $0\le n<N_T$ and if $m^{N_T}\in \cK_h$, then $m^n\in \cK_h$ for all $n$,  $0\le n<N_T$.
\begin{remark}\label{sec:numer-schem-refeq:7-2}
An important property of $\cT$ is that the operator $m\mapsto  \bigl(-\nu (\Delta_h m)_{i,j} - \cT_{i,j}(u,m)\bigr)_{i,j}$
 is the adjoint of the linearized version of the operator
 $u\mapsto   \bigl( -\nu (\Delta_h u)_{i,j} + g(x_{i,j}, [\nabla_h u]_{i,j} )\bigr)_{i,j}$.
\\ This property implies that the structure of (\ref{eq:1})-(\ref{eq:2}) is preserved in the discrete version (\ref{eq:18})-(\ref{eq:20}). In particular,
 it implies the uniqueness result stated in Theorem~\ref{sec:summary-1} below.
\end{remark}
\paragraph{Summary}
The fully discrete scheme
 for  system (\ref{eq:1}),(\ref{eq:2}),(\ref{eq:3})
is therefore the following: for all $0\le i,j< N_h$ and $ 0\le k < N_T$
\begin{equation}
\label{eq:21}
\left\{
  \begin{array}[c]{llr}
\frac {u^{k+1}_{i,j}- u^{k}_{i,j}} {\dt}  -\nu (\Delta_h u^{k+1})_{i,j}
 + g(x_{i,j}, \left[\nabla_h u^{k+1}\right]_{i,j} )&= F(m^{k}_{i,j}),\\
\frac {m^{k+1}_{i,j}- m^{k}_{i,j}} {\dt} +\nu (\Delta_h m^{k})_{i,j}
+\cT_{i,j}(u^{k+1},m^k)&=0,
\end{array}\right.
\end{equation}
with the initial and terminal conditions
\begin{equation}
  \label{eq:22}
u_{i,j}^0=u_0(x_{i,j}),\quad \quad m_{i,j}^{N_T}=  \frac 1 {h^2} \int_{|x-x_{i,j}|_{\infty}\le h/2} m_T(x) dx,\quad\quad   0\le i,j< N_h.\end{equation}
The following theorem was proved in \cite{MR2679575} (using essentially Brouwer's fixed point theorem and estimates on the solutions of the discrete Bellman equation):
\begin{theorem}
  \label{sec:exist-discr-probl-1}
Assume   that ($\mathbf{g_1}$)--($\mathbf{g_3}$) hold, that $u_0$ is  continuous on $\T^2$ and that $m_T \in L^1 (\T^2)$ is a probability density, i.e. $m_T\ge 0$ and $\int_{\T ^ 2}   m_T(x) dx=1$;  then (\ref{eq:21})--(\ref{eq:22}) has a solution such that $m^n\in \cK_h$,  $\forall n$.
\end{theorem}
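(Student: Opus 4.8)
The plan is to recast \eqref{eq:21}--\eqref{eq:22} as a fixed-point problem on the compact convex set $\prod_{n=0}^{N_T-1}\cK_h$ and to invoke Brouwer's theorem. Given a sequence $(m^n)_{0\le n<N_T}$ with each $m^n\in\cK_h$, I would first solve the discrete Bellman equation \eqref{eq:18} with the initial datum $u^0_{i,j}=u_0(x_{i,j})$, proceeding forward in $n$, to produce $(u^n)_{0\le n\le N_T}$; I would then feed the resulting values $(u^{n+1})$ into the discrete Fokker--Planck equation \eqref{eq:20} with the terminal datum \eqref{eq:22} to produce a new sequence $(\tilde m^n)_{0\le n<N_T}$. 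Writing $\Phi$ for this composite map, a fixed point of $\Phi$ is exactly a solution of \eqref{eq:21}--\eqref{eq:22}.

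First I would show that the Bellman step is well posed and continuous. For each $n$, \eqref{eq:18} can be rewritten as
\[
u^{n+1}-\dt\,\nu(\Delta_h u^{n+1})+\dt\,g(x,[\nabla_h u^{n+1}]) = u^n+\dt\,F(m^n),
\]
and the operator $w\mapsto w-\dt\,\nu(\Delta_h w)+\dt\,g(x,[\nabla_h w])$ is continuous on the finite-dimensional space of grid functions: the zeroth-order term $w$, the monotonicity of $-\Delta_h$, and assumption ($\mathbf{g_1}$) together make it strongly monotone and coercive, while ($\mathbf{g_3}$) gives continuity. Hence it is a homeomorphism (e.g.\ by Minty--Browder), so solving successively for $n=0,1,\dots,N_T-1$ determines $(u^n)$ uniquely; continuity of $F$ then makes the map $(m^n)\mapsto(u^n)$ continuous.

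The heart of the argument is the Fokker--Planck step. Given $(u^{n+1})$, equation \eqref{eq:20} is the backward linear recursion $L_n\,m^n=m^{n+1}$, where $L_n = I-\dt\,\nu\Delta_h-\dt\,\cT(u^{n+1},\cdot)$ is linear in $m$. I would first check mass conservation: summing \eqref{eq:20} over the grid, $\sum_{i,j}(\Delta_h m^n)_{i,j}$ vanishes by periodicity and $\sum_{i,j}\cT_{i,j}(u^{n+1},m^n)$ vanishes because $\cT$ is in discrete divergence form, so $h^2\sum_{i,j}m^{n+1}_{i,j}=h^2\sum_{i,j}m^n_{i,j}$; equivalently the columns of $L_n$ sum to one. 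Next, positivity: since $-\Delta_h$ has positive diagonal and nonpositive off-diagonal entries, and since the monotonicity assumption ($\mathbf{g_1}$) on $\partial g/\partial q_1,\dots,\partial g/\partial q_4$ forces the off-diagonal entries of $-\dt\,\cT(u^{n+1},\cdot)$ to be nonpositive, the matrix $L_n$ has positive diagonal and nonpositive off-diagonal entries. Together with unit column sums this exhibits $L_n$ as a column diagonally dominant $M$-matrix, hence invertible with $L_n^{-1}\ge0$. Consequently $m^{N_T}\in\cK_h$ propagates backward to $m^n\in\cK_h$ for every $n$, so $\Phi$ maps $\prod_{n}\cK_h$ into itself; continuity of $(u^n)\mapsto(m^n)$ follows from ($\mathbf{g_3}$), which makes the entries of $L_n^{-1}$ depend continuously on $u$.

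Finally, $\Phi$ is a continuous self-map of the compact convex set $\prod_{n=0}^{N_T-1}\cK_h$, so Brouwer's fixed-point theorem yields a fixed point, which is precisely a solution of \eqref{eq:21}--\eqref{eq:22} with $m^n\in\cK_h$ for all $n$. I expect the main obstacle to be the invariance $\Phi\bigl(\prod_n\cK_h\bigr)\subseteq\prod_n\cK_h$, and specifically the preservation of positivity: this is where the monotonicity structure ($\mathbf{g_1}$) must be exploited to identify $L_n$ as an $M$-matrix. Mass conservation, by contrast, is a routine discrete integration by parts, and the well-posedness and continuity of the Bellman step are standard facts about strongly monotone operators.
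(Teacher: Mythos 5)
Your overall architecture — a Brouwer fixed point for the composite map on $\prod_n\cK_h$, obtained by solving the Bellman equation forward and then the Fokker--Planck equation backward — is exactly the strategy of the proof the paper refers to (it only cites \cite{MR2679575}, describing it as ``Brouwer's fixed point theorem and estimates on the solutions of the discrete Bellman equation''). Your treatment of the Fokker--Planck step is correct and complete: the sign structure coming from ($\mathbf{g_1}$) does make $L_n=I-\dt\,\nu\Delta_h-\dt\,\cT(u^{n+1},\cdot)$ a Z-matrix with unit column sums, hence a nonsingular M-matrix with nonnegative inverse, and this is precisely how one justifies the paper's remark that $m^{N_T}\in\cK_h$ propagates to all $m^n$.

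The genuine gap is in the Bellman step. The operator $u\mapsto g(\cdot,[\nabla_h u])$ is \emph{not} monotone in the variational (Minty--Browder) sense under ($\mathbf{g_1}$)--($\mathbf{g_3}$), so your strong monotonicity claim fails. Take the one-dimensional analogue $g(q_1,q_2)=(q_2^+)^2$ (upwind scheme for $H(p)=(p^+)^2$, which satisfies ($\mathbf{g_1}$)--($\mathbf{g_4}$)), two grid points, $u=(-10R,R)$, $v=(0,2R)$: then
\begin{displaymath}
\bigl\langle g(\cdot,[\nabla_h u])-g(\cdot,[\nabla_h v]),\,u-v\bigr\rangle = -117\,R^3/h^2,
\end{displaymath}
which for $R$ large overwhelms the $O(R^2)$ contributions of the identity and of $-\nu\Delta_h$, no matter how small $\dt$ is; the superlinearity of $g$ means the defect of monotonicity is not Lipschitz-controlled. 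The property that ($\mathbf{g_1}$) actually delivers is the \emph{order-theoretic} monotonicity of the scheme: the map $u\mapsto u-\dt\,\nu(\Delta_h u)+\dt\,g(\cdot,[\nabla_h u])$ is nonincreasing in each off-diagonal unknown and commutes with addition of constants, whence a discrete comparison principle. Existence for each implicit step then requires a separate argument — e.g.\ Perron's method between the constant sub/supersolution $c\,\mathbf{1}$ with $c=\min_{i,j}(b_{i,j}-\dt\,g(x_{i,j},0))$, resp.\ $\max$, or an $L^\infty$ a priori bound from comparison combined with a degree/homotopy argument — and the comparison principle also gives the nonexpansiveness in $\ell^\infty$ that you need for the continuity of $(m^n)\mapsto(u^n)$. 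These are exactly the ``estimates on the solutions of the discrete Bellman equation'' alluded to in the paper; with that substitution your proof goes through.
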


Since (\ref{eq:21})-(\ref{eq:22}) has exactly the same structure as the continuous problem (\ref{eq:1})-(\ref{eq:3}), uniqueness has been obtained in \cite{MR2679575} 
with the same arguments as in \cite{MR2271747}:
\begin{theorem}\label{sec:summary-1}
  Assume  that  ($\mathbf{g_1}$)--($\mathbf{g_4}$) hold and that $F$ is nondecreasing 
then  (\ref{eq:21})--(\ref{eq:22}) has a unique solution.
\end{theorem}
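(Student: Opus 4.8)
The plan is to transpose to the discrete level the monotonicity argument of Lasry and Lions \cite{MR2271747}, using the adjoint structure recorded in Remark~\ref{sec:numer-schem-refeq:7-2} as a substitute for integration by parts. Suppose $(u,m)$ and $(\wtU,\wtM)$ are two solutions of (\ref{eq:21})--(\ref{eq:22}), and set $\bar u = u-\wtU$, $\bar m = m-\wtM$. First I would subtract the two discrete Bellman equations (\ref{eq:18}) and the two discrete Fokker--Planck equations (\ref{eq:20}). Then I would pair the difference of the Bellman equations at level $n$ with $\bar m^n_{i,j}$, pair the difference of the Fokker--Planck equations with $\bar u^{n+1}_{i,j}$, multiply both by $h^2\dt$, and sum over all grid nodes $(i,j)$ and over $n=0,\dots,N_T-1$.

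The heart of the computation is to show that the \emph{linear} contributions cancel. The five-point operator $\Delta_h$ is self-adjoint for the inner product $\langle a,b\rangle = h^2\sum_{i,j}a_{i,j}b_{i,j}$, so its contributions from the two equations, carrying opposite signs $-\nu$ and $+\nu$ by the forward--backward structure, cancel. The discrete time-difference terms telescope: for each node the scalar $(\bar u^{n+1}-\bar u^n)\bar m^n + (\bar m^{n+1}-\bar m^n)\bar u^{n+1}$ equals $\bar u^{n+1}\bar m^{n+1} - \bar u^n\bar m^n$, so after summation in $n$ only the endpoint contribution $\langle \bar u^{N_T},\bar m^{N_T}\rangle - \langle\bar u^0,\bar m^0\rangle$ survives, and this vanishes because $\bar u^0=0$ and $\bar m^{N_T}=0$ by the data (\ref{eq:22}). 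Finally the transport terms are rewritten by the adjoint identity of Remark~\ref{sec:numer-schem-refeq:7-2}, which by definition (\ref{eq:19}) reads $h^2\sum_{i,j}\cT_{i,j}(v^{n+1},w^n)\,\phi_{i,j} = -h^2\sum_{i,j} w^n_{i,j}\,\nabla_q g(x_{i,j},[\nabla_h v^{n+1}]_{i,j})\cdot[\nabla_h \phi]_{i,j}$; applied to both solutions and combined with the Hamiltonian terms of the Bellman equations, the linearized part of the transport cancels the linear part of $g(x_{i,j},[\nabla_h u^{n+1}])-g(x_{i,j},[\nabla_h\wtU^{n+1}])$.

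What is left is an exact discrete analogue of the Lasry--Lions identity. Writing $p=[\nabla_h u^{n+1}]_{i,j}$, $\tilde p=[\nabla_h\wtU^{n+1}]_{i,j}$ and introducing the Bregman remainders $E^n_{i,j}=g(x_{i,j},\tilde p)-g(x_{i,j},p)-\nabla_q g(x_{i,j},p)\cdot(\tilde p-p)$ and $\tilde E^n_{i,j}=g(x_{i,j},p)-g(x_{i,j},\tilde p)-\nabla_q g(x_{i,j},\tilde p)\cdot(p-\tilde p)$, both nonnegative by the convexity assumption ($\mathbf{g_4}$), the recombination yields
\[
\dt\sum_{n}h^2\sum_{i,j}\bigl(F(m^n_{i,j})-F(\wtM^n_{i,j})\bigr)\bigl(m^n_{i,j}-\wtM^n_{i,j}\bigr)
+\dt\sum_{n}h^2\sum_{i,j}\bigl(m^n_{i,j}E^n_{i,j}+\wtM^n_{i,j}\tilde E^n_{i,j}\bigr)=0.
\]
Since $m^n,\wtM^n\in\cK_h$ are nonnegative the second sum is nonnegative, and the first is nonnegative because $F$ is \emph{nondecreasing}; a sum of two nonnegative quantities vanishes only if each does.

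I would conclude as follows. From the vanishing of the first sum, $\bigl(F(m^n_{i,j})-F(\wtM^n_{i,j})\bigr)\bigl(m^n_{i,j}-\wtM^n_{i,j}\bigr)=0$ at every node; since $F$ is nondecreasing this forces $F(m^n_{i,j})=F(\wtM^n_{i,j})$ for all $(i,j)$ and $n$ (whether or not $m^n_{i,j}=\wtM^n_{i,j}$ there). Hence $u$ and $\wtU$ satisfy the \emph{same} discrete Bellman equation (\ref{eq:18}) with identical right-hand side and identical initial datum, and since that forward recursion is uniquely solvable at each step (by the monotonicity ($\mathbf{g_1}$), as recalled after (\ref{eq:18})) we get $u=\wtU$. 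Then the transport operators $\cT_{i,j}(u^{n+1},\cdot)=\cT_{i,j}(\wtU^{n+1},\cdot)$ coincide, so $m$ and $\wtM$ solve the same linear Fokker--Planck system (\ref{eq:20}) with the same terminal condition in (\ref{eq:22}), whence $m=\wtM$. I expect the main obstacle to be the careful bookkeeping of the summation by parts in the second step---verifying that the staggered indices $u^{n+1}$ versus $m^n$ of the semi-implicit scheme match up so that the time, diffusion and linearized transport terms cancel exactly, leaving precisely the monotone and convex remainders.
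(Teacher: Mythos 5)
Your argument is correct and is precisely the one the paper has in mind: it defers the proof to \cite{MR2679575}, where uniqueness is obtained by the same discrete transposition of the Lasry--Lions cross-duality argument, exploiting the adjoint structure of Remark~\ref{sec:numer-schem-refeq:7-2} exactly as you do. The bookkeeping you flag (pairing the Bellman difference at level $n$ with $\bar m^n$ and the Fokker--Planck difference with $\bar u^{n+1}$) indeed makes the time, diffusion and linearized transport terms cancel, and your conclusion via $F(m^n_{i,j})=F(\wtM^n_{i,j})$ followed by unique solvability of the two decoupled recursions is the standard and correct way to finish.
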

\begin{remark}
  \label{sec:summary-2}
Efficient algorithms for solving system (\ref{eq:21})-(\ref{eq:22}) require special efforts, essentially because of the forward-backward structure already discussed above.
We refer to \cite{MR2679575}  for the description of possible algorithms and numerical results.
\end{remark}

\section{Running assumptions and statement of the main result}
\label{sec:statement}

We now summarize the assumptions that will be made in the whole work.
\begin{itemize}
\item $u_0$ is a continuous function on $\T^2$
\item $m_T$  is a  nonnegative function in $L^\infty (\T^2)$ such that $\int_{\T^2} m_T(x)dx=1$
\item $F$ is a continuous function on $\R^+$, which is bounded from below.
\item The Hamiltonian\footnote{il carattere $C^1$ rispetto a  $x$ dove viene realmente usato ? Idem per la $g$} $(x,p)\mapsto H(x,p)$ is assumed to be convex with respect to $p$ and  $\cC^1$ regular w.r.t. $x$ and $p$.
\item The discrete Hamiltonian $g$ satisfies  ($\mathbf{g_1}$)-($\mathbf{g_4}$) and  the further assumption
\begin{description}
\item ($\mathbf{g_5}$) There exist positive constants $c_1, c_2,c_3,c_4$ such that
  \begin{eqnarray}
    \label{eq:27}
g_q(x,q)\cdot q -g(x,q)&\ge& c_1 |g_q(x,q)|^2 - c_2,\\
\label{eq:31}
|g_q(x,q)|&\le &c_3 |q| + c_4.
  \end{eqnarray}
\end{description}
\end{itemize}

\noindent Take for example $g$ as in (\ref{eq:10})~(\ref{eq:11}). It is clear that $g_q(x,q)\cdot q= \beta G(q_1^-,q_2^+,q_3^-,q_4^+ )$,
hence $g_q(x,q)\cdot q -g(x,q)= (\beta -1 )  G(q_1^-,q_2^+,q_3^-,q_4^+ ) - \cH(x)$. Since 
$|g_q(x,q)|^2= \beta^2  \left( G(q_1^-,q_2^+,q_3^-,q_4^+ )\right) ^ {2 \frac {\beta-1} \beta}$, 
 we see that ($\mathbf{g_5}$) holds if $1\leq \beta\le 2$.

\vskip0.5em
We can now state the main result of this article, which establishes the convergence of the  solutions of the finite difference scheme towards a weak solution of the continuous mean field games system.

\begin{theorem}\label{main} Let $(u^n), (m^n)$ be  a solution of the discrete system (\ref{eq:21})-(\ref{eq:22}) and $u_{h,\dt}$, $m_{h,\dt}$ be the piecewise constant functions which take  the values $u_{i,j}^{n+1} $ and $m_{i,j}^{n}$, respectively, in $(t_n,t_{n+1})\times (ih - h/2, ih+h/2) \times (jh -h/2, jh+h/2)$.  There exists a subsequence of $h$ and $\dt$ (not relabeled) and   functions $\tilde u$, $\tilde m$, which belong to $L^\alpha(0,T;W^{1,\alpha}(\T^2))$ for any $\alpha\in [1,\frac43)$, such that $u_{h,\dt} \to \tilde u$ and $m_{h,\dt} \to \tilde m$ in   $L^\beta(Q)$ for all $\beta\in [1,2)$,  and $(\tilde u, \tilde m)$ is a weak solution to the system (\ref{eq:1})-(\ref{eq:3}) in the following sense:
\begin{itemize}
\item[(i)] $H(\cdot, D\tilde u) \in L^1(Q)$,  $\tilde mF(\tilde m)\in L^1(Q)$,  $\tilde m[H_p(\cdot ,D\tilde u)\cdot D\tilde u-H(\cdot,D\tilde u)]\in L^1(Q)$
\item[(ii)] $(\tilde u, \tilde m)$ satisfies (\ref{eq:1})-(\ref{eq:2})  in the sense of distributions
\item[(iii)]  $\tilde u, \tilde m \in C^0([0,T]; L^1(\T^2))$ and $\tilde u|_{t=0}= u_0$, $\tilde m|_{t=T}=m_T$ .
\end{itemize}


\vskip0.4em
\begin{remark} 
We recall that, if $F$ is nondecreasing and $p\mapsto H(x,p)$ is strictly convex at infinity, it is proved in \cite{MR3305653} that weak solutions are unique whenever $H$ satisfies the structure conditions
$$
 \begin{array}{rl}
  H_p(t,x,p)\cdot p  \geq r\, H(t,x,p)  - \, \gamma &
\\
|H_p(t,x,p)| \leq \beta\,  (1+ |p|^{r-1})  &  
\\
H(t,x,p) \geq \alpha |p|^r- \gamma  &
  \end{array}
$$
for some $r\in (1,2]$ and some positive constant $\alpha,\beta,\gamma$. 

Therefore, in this case   the convergence established in  the above theorem holds for the whole sequence, and not only for a subsequence. 
\end{remark}

\end{theorem}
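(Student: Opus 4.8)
The plan is to carry out the four-step program sketched in the introduction: derive a priori estimates uniform in $(h,\dt)$, upgrade them to space-time Sobolev bounds and extract a weakly convergent subsequence, prove strong $L^1$ compactness of the discrete gradients, and finally pass to the limit in each discrete equation and recover the data.

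\emph{A priori estimates.} First I would establish the discrete analogue of the Lasry--Lions energy identity. Multiplying the discrete Bellman equation \eqref{eq:18} by $\dt\, h^2 m^n_{i,j}$ and the discrete Fokker--Planck equation \eqref{eq:20} by $\dt\, h^2 u^{n+1}_{i,j}$, summing over $i,j$ and $n$, and using the adjointness recorded in Remark~\ref{sec:numer-schem-refeq:7-2} to cancel the discrete Laplacian and transport contributions, I obtain the discrete counterpart of $\int m_T\,u(T)-\int m(0)\,u_0=\iint m\,(H_p\cdot Du-H)+\iint m\,F(m)$. The left-hand side is controlled through the comparison principle for the monotone scheme \eqref{eq:18} (using that $H$ and $F$ are bounded below, which gives one-sided $L^\infty$ bounds on $u$) together with $u_0$ continuous and $m^n\in\cK_h$. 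The right-hand side contains the nonnegative ``energy'' $\sum\dt\,h^2\,m^n_{i,j}\,(g_q\cdot q-g)$ and the coupling $\sum\dt\,h^2\,m^n\,F(m^n)$; invoking \eqref{eq:27} from ($\mathbf{g_5}$) turns the energy into the two key bounds $\sum\dt\,h^2\,m^n|g_q|^2\le C$ and $\|m\,F(m)\|_{L^1}\le C$, whence also $\|F(m)\|_{L^1}\le C$ since $F$ is bounded below and $m$ has unit mass. Closing the two-sided control on $u$, i.e.\ bounding $\int m_T\, u^{N_T}$ from above so that the energy estimate genuinely closes, is the delicate point here, and is where the lower bound on $F$ and the $L^\infty$ bound on $m_T$ enter.

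\emph{Sobolev bounds and compactness.} Next I would prove the discrete Fokker--Planck estimates: conservation of mass gives $m_{h,\dt}$ bounded in $L^\infty(0,T;L^1)$, and testing \eqref{eq:20} against powers of $m$, using the drift bound \eqref{eq:31} and the energy, yields $m_{h,\dt}$ bounded in $L^2(Q)$ and a discrete Sobolev bound placing it in $L^\alpha(0,T;W^{1,\alpha})$ for every $\alpha<\tfrac43$; the exponent is the classical one for parabolic problems with $L^1$ data in space dimension two, $\alpha<\tfrac{d+2}{d+1}$. The same exponent governs $u$: reading \eqref{eq:18} as a discrete heat equation with right-hand side $F(m^n)-g(x_{i,j},[\nabla_h u^{n+1}])$, which is bounded in $L^1$ once $\|g(\cdot,[\nabla_h u])\|_{L^1}\le C$ is derived by summing \eqref{eq:18} and using the previous bounds, the $L^1$-data regularity estimate places $u_{h,\dt}$ in $L^\alpha(0,T;W^{1,\alpha})$, $\alpha<\tfrac43$, with $\partial_{t,\dt}u$ and $\partial_{t,\dt}m$ bounded in suitable dual norms. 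From these I extract a subsequence with $u_{h,\dt}\rightharpoonup\tilde u$, $m_{h,\dt}\rightharpoonup\tilde m$ weakly in $L^\alpha(W^{1,\alpha})$ and, by a discrete Aubin--Lions argument, strongly in $L^\beta(Q)$ for $\beta<2$.

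\emph{Gradient compactness and passage to the limit.} The core of the argument, and the step I expect to be the main obstacle, is to upgrade the weak convergence of the gradients to strong (a.e.) convergence, which is indispensable for passing to the limit in the nonlinear terms $g(x,[\nabla_h u])$ and $m\,g_q(x,[\nabla_h u])$. Adapting the method of Boccardo--Murat--Puel, I would test the discrete Bellman equation against $\dt\,h^2\,T_k(u^{n+1}-\varphi^{n+1})$, with $T_k$ a truncation and $\varphi$ smooth approximating $\tilde u$, and exploit the convexity ($\mathbf{g_4}$) together with the coercivity in \eqref{eq:27} to show that $D_h u_{h,\dt}\to D\tilde u$ strongly in $L^\gamma(Q)$ for $\gamma<\tfrac43$. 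The difficulty is to control the discrete chain-rule and truncation errors, to cope with the semi-implicit time stepping and the four-point structure of $[\nabla_h u]$, and to show that the ``energy defect'' vanishes in the limit uniformly in $(h,\dt)$. Once a.e.\ convergence of the gradients is available, consistency ($\mathbf{g_2}$) and regularity ($\mathbf{g_3}$) give $g(x,[\nabla_h u])\to H(\cdot,D\tilde u)$ and $g_q(x,[\nabla_h u])\to H_p(\cdot,D\tilde u)$ a.e., and the uniform $L^1$ bounds with equi-integrability (Vitali) let me pass to the limit in \eqref{eq:18} and \eqref{eq:20}, yielding \eqref{eq:1}--\eqref{eq:2} in the sense of distributions and the integrability statements in~(i). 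Finally, the bounds on $\partial_{t,\dt}u$ and $\partial_{t,\dt}m$ give $\tilde u,\tilde m\in C^0([0,T];L^1(\T^2))$, and passing to the limit in \eqref{eq:22} recovers $\tilde u|_{t=0}=u_0$ and $\tilde m|_{t=T}=m_T$, establishing~(iii).
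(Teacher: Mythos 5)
Your overall architecture (energy identity, entropy/Sobolev bounds, gradient compactness, passage to the limit) matches the paper's, and the first two steps are essentially the paper's Lemmas~\ref{sec:priori-estim-discr-4}--\ref{sec:priori-estim-discr-3} and Theorem~\ref{sec:priori-estim-from-1} (minor quibbles: the paper tests the Fokker--Planck equation with $\ln m$ rather than powers of $m$, and obtains $m$ in $L^\alpha(Q)$ only for $\alpha<2$, not in $L^2(Q)$). But there is a genuine gap in your final step. You claim that once a.e.\ convergence of the discrete gradients is in hand, ``the uniform $L^1$ bounds with equi-integrability (Vitali)'' let you pass to the limit in $g(x,[\nabla_h u])$. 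Equi-integrability of this term is precisely what is \emph{not} available: the only uniform information is the $L^1$ bound (\ref{eq:41}), while $g$ may grow quadratically in the gradient and the gradients are bounded only in $L^\alpha$, $\alpha<\tfrac43$. (The same mechanism does work for $F(m_{h,\dt})$, because $mF(m)\in L^1$ supplies the missing equi-integrability; it does not for the Hamiltonian.) The paper resolves this by proving two one-sided inequalities: convexity of $g$ plus Fatou gives $\liminf\,h^2\dt\sum g\,\vfi\ge\int_Q H(x,D\tilde u)\vfi$, and the reverse inequality is obtained by testing the discrete Bellman equation with $e^{-\la u^{n+1}_{i,j}}\vfi$, using the growth bound (\ref{ggrow}) so that the exponential weight absorbs the Hamiltonian, exploiting the sign of the resulting discrete second-order terms to apply Fatou again, and finally removing the exponential with the test function $\xi\,\frac1h\int_{t-h}^{t}e^{\la T_k(\tilde u)}$. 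None of this appears in your proposal, and without it the limit equation is not identified.

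A secondary point: you attribute the strong compactness of the gradients to convexity ($\mathbf{g_4}$) and coercivity (\ref{eq:27}) via a Boccardo--Murat truncation $T_k(u-\vfi)$ with $\vfi$ a generic smooth approximation of $\tilde u$. In the paper the gradient compactness has nothing to do with the structure of $g$: the Bellman equation is read as a discrete heat equation with right-hand side bounded in $L^1$, and the comparison function is not arbitrary but the discrete solution $U_\de$ of a regularized problem, so that $e=u-U_\de$ itself satisfies a discrete equation whose $T_\sigma(e)$-energy is controlled by $\sigma$ times $L^1$ norms; optimizing in $\sigma$ yields $\|\,|D_h e|^s\|_{L^1}\le c\|D_he\|_{L^1}^{\theta s}\|e\|_{L^1}^{\theta(1-s)}$ and hence a.e.\ convergence. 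Your truncation idea is in the right spirit, but with a generic $\vfi$ you have no equation for $u-\vfi$ and the estimate does not close.
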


\section{A priori estimates}
\label{sec:priori-estimates}

\subsection{Norms and semi-norms}
\label{sec:some-notation-first}
It is useful to define the following norms and semi-norms:
\\
for a grid function $v\equiv (v_{i,j})_{i,j}$, we define 
  \begin{eqnarray}
\label{eq:17}
\|v\|_{L^s(\T_h^2)}&=&  \left(h^2 \sum_{i,j} |v_{i,j}|^s\right)^{\frac 1 s}, \\
\label{eq:23}
|v|_{W^{1,s}(\T_h^2)}&=&  \left(h^2 \sum_{i,j}  \left((D^+_1 v_{i,j})^2+ (D^+_2 v_{i,j})^{2}\right)^{\frac s  2} \right)^{\frac 1 s}
\\
\|v\|_{W^{1,s}(\T_h^2)}&=&\left(\|v\|_{L^s(\T_h^2)}^s+ |v|_{W^{1,s}(\T_h^2)}^s \right)^{\frac 1  s},
  \end{eqnarray}
where $D^+_1 v$ and $D^+_2 v$ are defined in (\ref{eq:8}).
We shall also write $|v|_{H^{1}(\T_h^2)}=|v|_{W^{1,2}(\T_h^2)}$, $\|v\|_{H^{1}(\T_h^2)}=\|v\|_{W^{1,2}(\T_h^2)}$, and define the  discrete $L^2$ scalar product:
\begin{displaymath}
  (v,w)_{L^2(\T_h^2)}=  h^2 \sum_{i,j} v_{i,j}w_{i,j}.
\end{displaymath}

We recall the discrete Sobolev inequality: for any $s<\infty$, there exists a constant $C$ such that for any grid function $v$,
\begin{displaymath}
  \|v\|_{L^s(\T_h^2)}\le C\left ( \|v\|_{L^2(\T_h^2)} + |v|_{H^1(\T_h^2)}\right)  .
\end{displaymath}
For $s> 1$, we define the  dual norm  $\|v\|_{W^{-1,s'}(\T_h^2)}$,  $\frac 1 s +\frac 1 {s'}=1$ by
 \begin{displaymath}
   \|v\|_{W^{-1,s'}(\T_h^2)}= \sup_{w\not=0} \frac { (v,w)_{L^2(\T_h^2)} }{\|w\|_{W^{1,s}(\T_h^2)}}.  
 \end{displaymath}
Define $Q_{h,\dt}= \dt\{ 0,\dots, N_T-1\}\times \T_h^2$. 
For a function $w$ defined on $Q_{h,\dt}$, $w\equiv (w^n_{i,j})_{i,j}$, $0\le n\le N_T$, we define for $s\in [1,+\infty)$,
\begin{eqnarray}
  \label{eq:24}
\|w\|_{L^s(Q_{h,\dt})}&= &\left(\dt \sum_{n=0}^{N_T} \|w^n\|_{L^s(\T_h^2)}^s\right)^{\frac 1 s}.
\end{eqnarray}


\subsection{First estimates}
\label{sec:first-estimates}
Hereafter, the constants appearing in the a priori estimates, for example $c$, $C$,  are independent of $h$ and $\dt$.
In this paragraph, we state the first a priori estimates stemming from the structure of the system.
Although we have already given the set of running assumptions,
 we think that it may be useful to specify which assumptions are really required by each  particular result.

\begin{lemma}\label{sec:priori-estim-discr-4}
  Under Assumptions ($\mathbf{g_1}$) and ($\mathbf{g_3}$), if $F$ is  
bounded from below by a constant $\underline F$, $u_0$ is continuous on $\T ^2$,
then for all $i,j,n$,
\begin{equation}
  \label{eq:40}
u_{i,j}^n \ge  \underline u - T \left( \underline F - \max_{x\in \T^2} H(x,0)\right)^- ,
\end{equation}
 where $ \underline u= \min_{x\in \T^2} u_0(x)$.
\end{lemma}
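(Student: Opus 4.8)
The plan is to prove the lower bound by establishing a discrete comparison principle for the Bellman scheme (\ref{eq:18}). The key observation is that the right-hand side satisfies $F(m^n_{i,j}) \ge \underline F$, so $u^{n+1}$ is a supersolution of a discrete equation whose right-hand side is the constant $\underline F$. I would first identify the discrete comparison principle that follows from the monotonicity assumption ($\mathbf{g_1}$): if $u$ and $v$ are two grid functions with $u_{i,j}^0 \ge v_{i,j}^0$ and the scheme operator applied to $u$ dominates that applied to $v$ at every node, then $u_{i,j}^n \ge v_{i,j}^n$ for all $n$. The proof of this comparison is the standard argument: at an index where $u^{n+1}-v^{n+1}$ attains its minimum over the grid, the discrete Laplacian term $-\nu(\Delta_h(u^{n+1}-v^{n+1}))_{i,j}$ is nonpositive, and by ($\mathbf{g_1}$) together with ($\mathbf{g_3}$) the difference of the numerical Hamiltonian values is controlled by the gradient differences in the right direction, so one can absorb these terms and conclude that the minimum cannot decrease too fast from one time step to the next.

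Concretely, I would construct an explicit subsolution of affine-in-time form. Set $v^n_{i,j} = \underline u - (N_T - n)\,\dt\, A$ for a constant $A \ge 0$ to be chosen, i.e. $v$ is spatially constant at each time level and increases linearly in $n$. Because $v^n$ is spatially constant, all its finite differences vanish, so $(\Delta_h v^{n+1})_{i,j}=0$ and $[\nabla_h v^{n+1}]_{i,j}=0$. Plugging $v$ into the left-hand side of (\ref{eq:18}) gives $A - \nu\cdot 0 + g(x_{i,j},0)$. By the consistency assumption ($\mathbf{g_2}$), $g(x_{i,j},0)=H(x_{i,j},0)$, so the scheme operator applied to $v$ equals $A + H(x_{i,j},0)$. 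Choosing $A = \bigl(\underline F - \max_{x}H(x,0)\bigr)^-$ (which is nonnegative), I would check that $A + H(x_{i,j},0) \le \underline F \le F(m^n_{i,j})$ at every node, so $v$ is indeed a discrete subsolution, while the scheme operator applied to the true solution $u$ equals exactly $F(m^n_{i,j})$.

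Next I would verify the initial condition: at $n=0$ one has $v^0_{i,j} = \underline u - N_T \dt A = \underline u - T A$, whereas $u^0_{i,j}=u_0(x_{i,j}) \ge \underline u = v^0_{i,j} + T A \ge v^0_{i,j}$, so $u^0 \ge v^0$ on the grid. Applying the comparison principle then yields $u^n_{i,j} \ge v^n_{i,j} = \underline u - (N_T-n)\dt A \ge \underline u - T A$, which is exactly (\ref{eq:40}) after substituting the value of $A$.

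The main obstacle is the careful handling of the numerical Hamiltonian term in the comparison step. One must show that at a spatial minimum of $w^{n+1} := u^{n+1}-v^{n+1}$, the quantity $g(x_{i,j},[\nabla_h u^{n+1}]_{i,j}) - g(x_{i,j},[\nabla_h v^{n+1}]_{i,j})$ has the correct sign or is controllable. Since $v$ is spatially constant here, $[\nabla_h v^{n+1}]_{i,j}=0$, and at the minimizing index the forward differences $(D_1^+ w)_{i,j}, (D_2^+ w)_{i,j}$ are nonnegative while $(D_1^+ w)_{i-1,j}, (D_2^+ w)_{i,j-1}$ are nonpositive; combined with the monotonicity directions in ($\mathbf{g_1}$), this forces $g(x_{i,j},[\nabla_h u^{n+1}]_{i,j}) \le g(x_{i,j},0) = H(x_{i,j},0)$, which is precisely what closes the inequality. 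This simplification, arising from choosing a constant-in-space comparison function, is what makes the argument clean, so the only genuine care needed is writing the sign analysis of the four one-sided differences correctly.
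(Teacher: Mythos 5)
Your overall strategy---a discrete comparison principle obtained from ($\mathbf{g_1}$) at a spatial minimum, together with a spatially constant, affine-in-time barrier---is the right one (the paper states this lemma without proof, as a standard consequence of the monotonicity of the scheme), and your sign analysis at the minimizing index is correct: there $q_1,q_3\ge 0$ and $q_2,q_4\le 0$, so ($\mathbf{g_1}$) and ($\mathbf{g_2}$) give $g(x_{i,j},[\nabla_h u^{n+1}]_{i,j})\le g(x_{i,j},0)=H(x_{i,j},0)$, and $-\nu(\Delta_h u^{n+1})_{i,j}\le 0$. However, your explicit barrier has the wrong time orientation, and the verification that it is a subsolution fails in the only nontrivial case. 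With $v^n_{i,j}=\underline u-(N_T-n)\,\dt\,A$ the discrete time derivative is $+A$, so the scheme operator applied to $v$ equals $A+H(x_{i,j},0)$, which you claim is $\le \underline F$. But with the paper's convention $r^-=\max(-r,0)$ one has $A=(\underline F-\max_{x}H(x,0))^-=\max\bigl(\max_{x}H(x,0)-\underline F,\,0\bigr)$. If $A>0$, i.e.\ $\max_{x}H(x,0)>\underline F$, then at a grid point near the maximizer of $H(\cdot,0)$ the quantity $A+H(x_{i,j},0)$ is close to $2\max_{x}H(x,0)-\underline F>\underline F$, so the subsolution inequality is false; if $A=0$ your barrier is constant and the bound is trivial. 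Thus the comparison hypothesis is violated exactly when the lemma has content.

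The fix is immediate: take the barrier \emph{decreasing} in time, $v^n_{i,j}=\underline u-n\,\dt\,A$. Then $v^0=\underline u\le u^0$, the discrete time derivative is $-A$, and the scheme operator applied to $v$ is $-A+H(x_{i,j},0)\le -A+\max_{x}H(x,0)\le \underline F\le F(m^n_{i,j})$, precisely because $A\ge \max_{x}H(x,0)-\underline F$. Your comparison principle then yields $u^n_{i,j}\ge v^n_{i,j}=\underline u-n\dt A\ge \underline u-TA$, which is (\ref{eq:40}). Equivalently, you can dispense with the two-function comparison altogether: at the index where $u^{n+1}$ attains its grid minimum, the scheme gives $\min_{i,j}u^{n+1}_{i,j}\ge \min_{i,j}u^{n}_{i,j}+\dt\bigl(\underline F-\max_{x}H(x,0)\bigr)$, and iterating over $n$ gives the stated bound directly.
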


\begin{lemma}\label{sec:priori-estim-discr-6}
Under Assumptions ($\mathbf{g_1}$),  ($\mathbf{g_3}$) and  ($\mathbf{g_5}$),  if
$F$ is   bounded from below by  $\underline F$,
$u_0$ is continuous on $\T ^2$ and $m_T$ is bounded from above by $\bar m_T$,
then there exists a constant $C$ such that 
\begin{eqnarray}
   \label{eq:28}
h^2 \dt \sum_{k=0}^{ N_T-1} \sum_{i,j}  m^k_{i,j}   \left | g_q(x_{i,j}, \left[\nabla_h u^{k+1}\right]_{i,j}) \right|^2 \le C,
\\\label{eq:41}
h^2 \dt \sum_{k=0}^{ N_T-1} \sum_{i,j}  g(x_{i,j}, \left[\nabla_h u^{k+1}\right]_{i,j}) \le C,
 \\
\label{eq:42}
 h^2 \dt \sum_{k=0}^{ N_T-1} \sum_{i,j}  m^k_{i,j}  F (m^k_{i,j}) \le C.
\end{eqnarray}
\end{lemma}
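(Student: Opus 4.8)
\emph{Plan.} The whole lemma rests on the discrete energy identity produced by the adjoint structure of Remark~\ref{sec:numer-schem-refeq:7-2}. I would test the discrete Bellman equation (\ref{eq:18}) with $\dt\,h^2 m^k_{i,j}$ and the discrete Fokker--Planck equation (\ref{eq:20}) with $\dt\,h^2 u^{k+1}_{i,j}$, sum over $i,j$ and over $k=0,\dots,N_T-1$, and add the two. By self-adjointness of $\Delta_h$ the two diffusion contributions cancel exactly, and by the duality relation defining $\cT$ — namely $h^2\sum_{i,j}\cT_{i,j}(u,m)\,w_{i,j}=-h^2\sum_{i,j}m_{i,j}\,g_q(x_{i,j},[\nabla_h u]_{i,j})\cdot[\nabla_h w]_{i,j}$, here with $w=u^{k+1}$ — the transport term becomes $-h^2\sum_{i,j} m^k_{i,j}\,g_q\cdot[\nabla_h u^{k+1}]$. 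The two discrete time derivatives telescope, since $(m^k,u^{k+1}-u^k)+(u^{k+1},m^{k+1}-m^k)=(m^{k+1},u^{k+1})-(m^k,u^k)$ in the $L^2(\T_h^2)$ scalar product. This yields the exact identity
\begin{equation*}
(m^{N_T},u^{N_T})_{L^2(\T_h^2)}-(m^0,u^0)_{L^2(\T_h^2)} = A + B,
\end{equation*}
where $A=\dt\,h^2\sum_{k,i,j} m^k_{i,j}\bigl(g_q\cdot[\nabla_h u^{k+1}]-g\bigr)$ (arguments $(x_{i,j},[\nabla_h u^{k+1}]_{i,j})$) and $B$ is the left-hand side of (\ref{eq:42}).

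Next I would invoke the structure assumption (\ref{eq:27}) of ($\mathbf{g_5}$): since $h^2\sum_{i,j}m^k_{i,j}=1$ for every $k$ ($m^k\in\cK_h$), one has $A\ge c_1 E-c_2T$, where $E$ denotes the left-hand side of (\ref{eq:28}). As $F\ge\underline F$ and $m^k\ge0$ give $B\ge\underline F\,T$, and $(m^0,u^0)$ is bounded ($u_0$ continuous, $m^0\in\cK_h$), the identity collapses to a single master inequality
\begin{equation*}
c_1 E + B \;\le\; (m^{N_T},u^{N_T})_{L^2(\T_h^2)} + C .
\end{equation*}
Everything therefore reduces to an \emph{upper} bound on the terminal coupling $(m^{N_T},u^{N_T})_{L^2(\T_h^2)}$ (the discrete analogue of $\int_{\T^2} m_T\,u(T)$); note that the lower bound of Lemma~\ref{sec:priori-estim-discr-4} controls this quantity only from below, which is the wrong direction here.

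Estimate (\ref{eq:41}) I would obtain cheaply and in parallel: summing (\ref{eq:18}) against $h^2$ over $i,j$, using $\sum_{i,j}(\Delta_h u^{k+1})_{i,j}=0$ and telescoping in time gives $\dt\,h^2\sum_{k,i,j} g = h^2\sum_{i,j}(u^0-u^{N_T})+\dt\,h^2\sum_{k,i,j}F(m^k)$. The first term is bounded above by the lower bound of Lemma~\ref{sec:priori-estim-discr-4}, and the \emph{unweighted} sum of $F(m^k)$ is compared to $B$ via the elementary inequality (valid because $F$ is continuous and bounded below) $\dt\,h^2\sum F(m^k)\le B + C$, obtained by splitting the grid into $\{m^k\ge1\}$, where $F(r)\le rF(r)+C_F$, and $\{m^k<1\}$, where $F$ is bounded. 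Hence (\ref{eq:41}) follows as soon as $B$ is controlled, and so do $E$ and $B$ via the master inequality once the terminal term is controlled.

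The hard part is exactly that terminal term, and the difficulty is that it is genuinely \emph{coupled} to $B$ through the possible superlinear growth of $F$: the final value $u^{N_T}$ can be bounded from above only in terms of the accumulated forcing $F(m^k)-g$, which is not a priori bounded. Pairing with $m_T$, using $0\le m^{N_T}\le\bar m_T$ and again $F(r)\le rF(r)+C$, one only reaches $(m^{N_T},u^{N_T})_{L^2(\T_h^2)}\le \bar m_T\,B + C$; since $\bar m_T\ge\int_{\T^2}m_T=1$, substituting this back into the master inequality is self-referential and does not close. I expect the decisive step to be breaking this loop by bringing in the regularity of the discrete Fokker--Planck equation: the drift energy $E$ controls $m$ in $L^p(Q_{h,\dt})$ for a suitable $p$ (a discrete Gagliardo--Nirenberg / duality bound, the counterpart of the Fokker--Planck estimates of \S\ref{sec:priori-estim-discr}), which in turn bounds $B$ by a \emph{sublinear} power of $E$. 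Reconciling $E\lesssim 1+B$ with $B\lesssim 1+E^{\theta}$, $\theta<1$ — precisely in the borderline range left open by ($\mathbf{g_5}$) — and absorbing through Young's inequality to get uniform bounds on $E$ and $B$ (hence on (\ref{eq:41})) is, I anticipate, the technical heart of the proof.
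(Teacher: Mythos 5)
Your setup is the right one and your diagnosis of where it breaks is accurate: testing the Bellman equation with $m^k$ and the Fokker--Planck equation with $u^{k+1}$ does produce the identity $(m^{N_T},u^{N_T})_{L^2(\T_h^2)}-(m^0,u^0)_{L^2(\T_h^2)}=A+B$, and the terminal coupling $(m^{N_T},u^{N_T})_{L^2(\T_h^2)}$ is indeed the obstruction, since Lemma~\ref{sec:priori-estim-discr-4} controls it only from below. What is missing is the elementary but decisive idea that closes the loop: the paper does not test with $m^k$ and $u^{k+1}$ but with the \emph{shifted} quantities $m^k-\bar m_T$ and $u^{k+1}-\tilde u^{k+1}$, where $\tilde u^n_{i,j}=n\,\dt\,F(\bar m_T)$ is the explicit solution of the discrete heat equation with constant right-hand side $F(\bar m_T)$ and zero initial datum. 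This is exactly where the hypothesis $m_T\le\bar m_T$ enters: since $m^{N_T}_{i,j}$ is a cell average of $m_T$, one has $m^{N_T}-\bar m_T\le 0$ pointwise with $h^2\sum_{i,j}|m^{N_T}_{i,j}-\bar m_T|\le 1+\bar m_T$, so the terminal term becomes $(m^{N_T}-\bar m_T,\,u^{N_T}-T F(\bar m_T))_{L^2(\T_h^2)}$, the pairing of a nonpositive function of bounded mass with a function bounded from below --- and this is bounded from \emph{above} using precisely the ``wrong-direction'' estimate of Lemma~\ref{sec:priori-estim-discr-4} that you discarded. The price of the shift is the two extra terms $\bar m_T\sum g$ and $\sum(m^k-\bar m_T)(F(m^k)-F(\bar m_T))$ on the left of (\ref{eq:39}); the latter is handled by $F(t)\le\tfrac1{2\bar m_T}tF(t)+c$, which keeps $\tfrac12\sum m^kF(m^k)$ with a good sign, and the former delivers (\ref{eq:41}) essentially for free.

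By contrast, the bootstrap you propose to break the self-reference cannot work under the running assumptions. $F$ is only required to be continuous and bounded below, with no growth restriction, so $B=h^2\dt\sum m^k_{i,j}F(m^k_{i,j})$ is not controlled by $\|m\|_{L^p(Q_{h,\dt})}$ for any finite $p$ (take $F(r)=e^{e^r}$); hence no inequality of the form $B\lesssim 1+E^{\theta}$ with $\theta<1$ is available. Moreover, the Fokker--Planck estimates of \S~\ref{sec:priori-estim-discr} that you would invoke (Lemma~\ref{sec:priori-estim-discr-3}) bound $\|m\|_{L^\alpha}$ in terms of $E$ itself, so that loop runs in the wrong direction. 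Your derivation of (\ref{eq:41}) from the spatially averaged Bellman equation is fine as a conditional statement, but everything hinges on the terminal term, and the shift by $\bar m_T$ and by $n\,\dt\,F(\bar m_T)$ is the one idea that resolves it.
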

\begin{proof}
Consider  $\tilde u_{i,j}^n= n \dt F(\bar m_T)$ 
for all $i,j,n$.
We get immediately
\begin{equation}
   \label{eq:37}
\frac {\tilde u^{n+1}_{i,j}- \tilde u^{n}_{i,j}} {\dt} 
 -\nu (\Delta_h \tilde u^{n+1})_{i,j}
=F(\bar m_T) 
\end{equation}
Subtract (\ref{eq:37}) from (\ref{eq:18}) and multiply the resulting equation by
 $ m_{i,j}^n-\bar m_T$.  
Similarly, multiply (\ref{eq:20})  
 by $ u^{n+1}_{i,j}-\tilde u^{n+1}_{i,j}$. Adding the two 
resulting identities and summing with respect to $n$, one gets:
\begin{equation}
   \label{eq:39}
   \begin{split}
 & \ds h^2 \dt \sum_{k=0}^{ N_T-1} \sum_{i,j}  m^k_{i,j}    \left (
g_q(x_{i,j}, \left[\nabla_h u^{k+1}\right]_{i,j})\cdot \left[\nabla_h u^{k+1}\right]_{i,j}
- g(x_{i,j}, \left[\nabla_h u^{k+1}\right]_{i,j})  \right ) \\
& \ds +h^2 \dt \sum_{k=0}^{ N_T-1} \sum_{i,j}  \bar m_T   g(x_{i,j}, \left[\nabla_h u^{k+1}\right]_{i,j}) \\
& \ds +h^2 \dt \sum_{k=0}^{ N_T-1} \sum_{i,j} 
 (m^k_{i,j} -  \bar m_T )      ( F (m^k_{i,j})- F( \bar m_T )) \\
= & (m^{N_T}-\bar m_T , u^{N_T} - T\, F(\bar m_T))_{L^2(\T_h^2)}
 - (m^{0} -  \bar m_T, u^{0} )_{L^2(\T_h^2)}.
\end{split}
\end{equation}
\begin{enumerate}
\item Since $m^{N_T}-\bar m_T$ is nonpositive with a bounded mass, 
 and since $u^n$ is bounded from below, see (\ref{eq:40}), the term 
$ (m^{N_T}-\bar m_T , u^{N_T} - T\, F(\bar m_T))_{L^2(\T_h^2)}$
 in the right hand side of (\ref{eq:39}) is bounded from above by a  constant independent of $h$ and $\dt$.
\item It is straightforward to see that
 $(m^{0} -  \bar m_T, u^{0} )_{L^2(\T_h^2)}\le (1+ \bar m_T) \|u_0\|_{\infty}$.
\item Since $F$ is continuous, there exists a constant $c$ such that 
$F(t)\le \frac 1{2\bar m_T} t F(t)+c$, $\forall t\ge 0$. Hence,
\begin{displaymath}
h^2 \dt \sum_{k=0}^{ N_T-1} \sum_{i,j}   (m^k_{i,j} -  \bar m_T )       F (m^k_{i,j})\ge
 \frac 1 2 h^2 \dt \sum_{k=0}^{ N_T-1} \sum_{i,j}  m^k_{i,j} F (m^k_{i,j}) -c.
\end{displaymath}
\item Finally, $h^2 \dt \sum_{k=0}^{ N_T-1} \sum_{i,j} 
 (m^k_{i,j} -  \bar m_T )     F( \bar m_T ) = T (1-\bar m_T)F( \bar m_T ) $
\end{enumerate}
From these observations, (\ref{eq:28}), (\ref{eq:41}) and (\ref{eq:42}) follow from (\ref{eq:39}) and  (\ref{eq:27}).
\end{proof}

\subsection{A priori estimates from the discrete Fokker-Planck equation}
\label{sec:priori-estim-discr}
The following estimates for the  Fokker-Planck equation may have their own interest:
\begin{lemma}
Assume ($\mathbf{g_1}$) and $(\mathbf{g_3})$.
  Let $\psi$ be a non decreasing and concave function defined on $\R_+$. 
For any grid function $v= (v_{i,j})$, any positive grid function $m=(m_{i,j})$ and any positive number $\eta$,
\begin{equation}
  \label{eq:4}
  \begin{array}[c]{l}
    \ds \sum_{i,j} \cT_{i,j}(v,m) \psi(m_{i,j}) 
\le  \ds \frac \eta {2} \sum_{i,j} [\nabla_h \psi(m)]_{i,j}\cdot [\nabla_h m]_{i,j}  \\
 \ds + \frac 1 {2\eta} \sum_{i,j}  m^2_{i,j}   \psi'(m_{i,j}) \left( \frac {\partial g} {\partial q_1} (x_{i,j}, [\nabla_h v]_{i,j}) \right)^2  1_{\{ m_{i+1,j}> m_{i,j}\} } \\
 \ds + \frac 1 {2\eta} \sum_{i,j}  m^2_{i,j}   \psi'(m_{i,j}) \left( \frac {\partial g} {\partial q_2} (x_{i,j}, [\nabla_h v]_{i,j}) \right)^2  1_{\{ m_{i-1,j}> m_{i,j}\} }\\
 \ds + \frac 1 {2\eta} \sum_{i,j}  m^2_{i,j}   \psi'(m_{i,j}) \left( \frac {\partial g} {\partial q_3} (x_{i,j}, [\nabla_h v]_{i,j}) \right)^2  1_{\{ m_{i,j+1}> m_{i,j}\} } \\
 \ds + \frac 1 {2\eta} \sum_{i,j}  m^2_{i,j}   \psi'(m_{i,j}) \left( \frac {\partial g} {\partial q_4} (x_{i,j}, [\nabla_h v]_{i,j}) \right)^2  1_{\{ m_{i,j-1}> m_{i,j} \}}.
  \end{array}
\end{equation}
In particular, if $m$ does not vanish, then for $\psi(z)=\ln(z)$, 
\begin{equation}
\label{eq:5}
  \begin{array}[c]{ll}
    &\ds \sum_{i,j} \cT_{i,j}(v,m) \ln(m_{i,j}) 
\\
\le & \ds \frac \eta {2} \sum_{i,j} \left[\nabla_h \ln(m)\right]_{i,j}\cdot \left[\nabla_h m\right]_{i,j} +
  \frac 1 {2\eta} \sum_{i,j}   m_{i,j} \left | g_q(x_{i,j}, [\nabla_h v]_{i,j}) \right|^2.
  \end{array}
\end{equation}
\end{lemma}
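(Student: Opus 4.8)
The plan is to reduce the left-hand side of \eqref{eq:4} to a sum of four one-directional contributions by discrete summation by parts (Abel summation) on the torus, freely shifting indices thanks to periodicity, and then to treat each contribution by a weighted Young inequality combined with the concavity of $\psi$.

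First I would expand $\cT_{i,j}(v,m)$ from its definition \eqref{eq:19}, writing $h\,\cT_{i,j}(v,m)$ as the sum of the four paired terms involving $\frac{\partial g}{\partial q_1},\dots,\frac{\partial g}{\partial q_4}$, multiply by $\psi(m_{i,j})$, and sum over $(i,j)$. Reindexing by periodicity (e.g. $\sum_{i,j} m_{i-1,j}\frac{\partial g}{\partial q_1}(x_{i-1,j},[\nabla_h v]_{i-1,j})\psi(m_{i,j}) = \sum_{i,j} m_{i,j}\frac{\partial g}{\partial q_1}(x_{i,j},[\nabla_h v]_{i,j})\psi(m_{i+1,j})$), the $q_1$-pair collapses to $\frac1h\sum_{i,j} m_{i,j}\frac{\partial g}{\partial q_1}(x_{i,j},[\nabla_h v]_{i,j})\bigl(\psi(m_{i,j})-\psi(m_{i+1,j})\bigr)$, and analogously for $q_2,q_3,q_4$. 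Then I would invoke the monotonicity assumption $(\mathbf{g_1})$: since $\frac{\partial g}{\partial q_1}\le 0$ and $\frac{\partial g}{\partial q_2}\ge 0$ (and likewise $\frac{\partial g}{\partial q_3}\le0$, $\frac{\partial g}{\partial q_4}\ge0$), while $m_{i,j}\ge0$ and $\psi$ is nondecreasing, the sign of each summand is governed by the sign of $m_{i+1,j}-m_{i,j}$ (resp. $m_{i-1,j}-m_{i,j}$, etc.). Discarding the nonpositive summands retains exactly the terms carrying the indicators $1_{\{m_{i+1,j}>m_{i,j}\}}$, $1_{\{m_{i-1,j}>m_{i,j}\}}$, $1_{\{m_{i,j+1}>m_{i,j}\}}$, $1_{\{m_{i,j-1}>m_{i,j}\}}$ that appear on the right-hand side of \eqref{eq:4}.

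On a retained $q_1$-summand, writing $\delta\psi=\psi(m_{i+1,j})-\psi(m_{i,j})\ge0$ and $\delta m=m_{i+1,j}-m_{i,j}>0$, I would apply the weighted Young inequality $\frac1h\,m_{i,j}\bigl(-\tfrac{\partial g}{\partial q_1}\bigr)\delta\psi \le \tfrac{\eta}{2}\tfrac{\delta\psi\,\delta m}{h^2} + \tfrac1{2\eta}\tfrac{m_{i,j}^2(\partial g/\partial q_1)^2\,\delta\psi}{\delta m}$ (i.e. $XY\le\tfrac\eta2X^2+\tfrac1{2\eta}Y^2$ with $X=\tfrac{\sqrt{\delta\psi\,\delta m}}{h}$, $Y=\tfrac{m_{i,j}(-\partial g/\partial q_1)\sqrt{\delta\psi}}{\sqrt{\delta m}}$). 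The concavity of $\psi$ then enters decisively: $\delta\psi\le\psi'(m_{i,j})\,\delta m$, so $\delta\psi/\delta m\le\psi'(m_{i,j})$ on the retained set, converting the second piece into $\tfrac1{2\eta}m_{i,j}^2\psi'(m_{i,j})(\partial g/\partial q_1)^2$, exactly the first source term of \eqref{eq:4}, while the first piece is $\tfrac{\eta}{2}(D_1^+\psi(m))_{i,j}(D_1^+m)_{i,j}$. Summing the four directions, and using periodic reindexing to recognize $\sum_{i,j}[\nabla_h\psi(m)]_{i,j}\cdot[\nabla_h m]_{i,j}=2\sum_{i,j}\bigl[(D_1^+\psi(m))_{i,j}(D_1^+m)_{i,j}+(D_2^+\psi(m))_{i,j}(D_2^+m)_{i,j}\bigr]$, the diffusion pieces assemble precisely into $\tfrac{\eta}{2}\sum_{i,j}[\nabla_h\psi(m)]_{i,j}\cdot[\nabla_h m]_{i,j}$, yielding \eqref{eq:4}. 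Finally \eqref{eq:5} follows by taking $\psi=\ln$, for which $m_{i,j}^2\psi'(m_{i,j})=m_{i,j}$, and then summing the four squared partials into $|g_q|^2$ after discarding the indicators (all four source terms being nonnegative).

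I expect the main obstacle to be the bookkeeping of the second and third steps: calibrating the weighting in Young's inequality so that the diffusion factor is exactly $\delta\psi\,\delta m/h^2$, and matching each indicator set and each $\partial g/\partial q_\alpha$ to its counterpart in \eqref{eq:4}. There are also two minor technical caveats to record, namely that on the retained sets $\delta m\neq0$, so the division by $\delta m$ is legitimate, and that $\psi'$ is to be read as the right derivative wherever the concave function $\psi$ fails to be of class $\cC^1$.
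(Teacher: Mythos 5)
Your proposal is correct and follows essentially the same route as the paper: sum by parts via the definition of $\cT$ to get the four one-directional sums, discard the nonpositive summands using ($\mathbf{g_1}$) to obtain the indicator sets, then combine the concavity bound $\delta\psi/\delta m\le\psi'(m_{i,j})$ with a weighted Young inequality to split each retained term into the diffusion piece and the source piece. The only cosmetic difference is that the paper applies concavity before Young's inequality (writing $(\delta\psi)_+\le(\psi'(m_{i,j})\,\delta\psi\,\delta m)^{1/2}$) whereas you apply it afterwards, which is equivalent.
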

\begin{proof}
 By the definition of $\cT$, we can split the sum $S= \sum_{i,j} \cT_{i,j}(v,m) \psi(m_{i,j})$ as follows:
 \begin{displaymath}
   S= - \sum_{i,j}    m_{i,j} \nabla_q g (x_{i,j},[\nabla_h v]_{i,j}) \cdot [\nabla_h \psi(m)]_{i,j}=S_1+S_2+S_3+S_4,
 \end{displaymath}
where
\begin{displaymath}
  \begin{array}[c]{rcl}
    S_1&=&\ds  -\frac 1 h \sum_{i,j}    m_{i,j} 
\frac {\partial g} {\partial q_1} (x_{i,j}, [\nabla_h v]_{i,j})  (\psi(m_{i+1,j})-\psi(m_{i,j})),
\\
 S_2&=&\ds - \frac 1 h \sum_{i,j}    m_{i,j} 
 \frac {\partial g} {\partial q_2} (x_{i,j}, [\nabla_h v]_{i,j})  (\psi(m_{i,j})-\psi(m_{i-1,j})) ,
\\
  S_3&=&\ds  -\frac 1 h \sum_{i,j}    m_{i,j} 
\frac {\partial g} {\partial q_3} (x_{i,j}, [\nabla_h v]_{i,j})  (\psi(m_{i,j+1})-\psi(m_{i,j})),
\\
 S_4&=&\ds  -\frac 1 h \sum_{i,j}    m_{i,j} 
 \frac {\partial g} {\partial q_4} (x_{i,j}, [\nabla_h v]_{i,j})  (\psi(m_{i,j})-\psi(m_{i,j-1})) .
\end{array}
\end{displaymath}
It is enough to focus on $S_1$  since the same arguments can be used for the other sums.
Since $g$ is nonincreasing w.r.t. $q_1$, 
\begin{displaymath}
    S_1 \le \ds  -\frac 1 h \sum_{i,j}    m_{i,j} 
\frac {\partial g} {\partial q_1} (x_{i,j}, [\nabla_h v]_{i,j})  (\psi(m_{i+1,j})-\psi(m_{i,j}))_+ .
\end{displaymath}
Since $\psi$ is nondecreasing, if $m_{i+1,j}>m_{i,j} $, the factor $  (\psi(m_{i+1,j})-\psi(m_{i,j}))_+$ can be rewritten
\[ \left(\frac{\psi(m_{i+1,j})-\psi(m_{i,j})} {m_{i+1,j}-m_{i,j}}\right)^{\frac 1 2}  \Bigl(
(\psi(m_{i+1,j})-\psi(m_{i,j})) (m_{i+1,j}-m_{i,j})  \Bigr)^{\frac 1 2}. \]
 Since $\psi$ is nondecreasing and concave, $m_{i+1,j}>m_{i,j} $ implies that
\[0\le \frac{\psi(m_{i+1,j})-\psi(m_{i,j})} {m_{i+1,j}-m_{i,j}}\le \psi'(m_{i,j} ) .\]
Hence, if  $m_{i+1,j}\ge m_{i,j} $, then
\[  (\psi(m_{i+1,j})-\psi(m_{i,j}))_+ 
\le   \Bigl( \psi'(m_{i,j} )  
(\psi(m_{i+1,j})-\psi(m_{i,j})) (m_{i+1,j}-m_{i,j})  \Bigr)^{\frac 1 2},\]
which implies that
\begin{displaymath}
  \begin{array}[c]{rcl}
       S_1 &\le &\ds   \frac 1 {2\eta } \sum_{i,j}  m^2_{i,j}   \psi'(m_{i,j})    
\left( \frac {\partial g} {\partial q_1} (x_{i,j}, [\nabla_h v]_{i,j}) \right)^2  1_{\{ m_{i+1,j}> m_{i,j}\} }    \\ & &\ds
+    \frac \eta {2h^2} \sum_{i,j}    (\psi(m_{i+1,j})-\psi(m_{i,j}))  (m_{i+1,j}-m_{i,j}) 1_{\{ m_{i+1,j}> m_{i,j}\} }.
  \end{array}
\end{displaymath}
\end{proof}

\begin{lemma}
  \label{sec:priori-estim-discr-1}
Assume ($\mathbf{g_1}$) and $(\mathbf{g_3})$.
  Let $\psi$ be a non decreasing and concave function defined on $\R_+$. 
For any positive grid functions $(m^k_{i,j})$, $k=0,\dots,N_T$,
\begin{equation}
\label{eq:6}
 \sum_{k=n+1}^{ N_T}  \sum_{i,j}   m^k_{i,j}     (\psi(m^{k}_{i,j}) - \psi(m^{k-1}_{i,j}))
 \ge  -\sum_{k=n+1}^{ N_T}  \sum_{i,j}   m^k_{i,j}  \psi'( m^k_{i,j} ) (m^{k-1}_{i,j} -m^{k}_{i,j})_+.
\end{equation}
If $m^k \in \cK_h$ for all $k\in \{0,\dots, N_T\}$ and does not vanish, then 
\begin{equation}
  \label{eq:7}
  \begin{split}
  h^2 \sum_{k=n}^{N_T-1} \sum_{i,j} \left( m^{k+1}_{i,j} -m^{k}_{i,j}\right) \ln(m^k_{i,j})
  \le  h^2   \sum_{i,j}  m^{N_T}_{i,j}\ln(m^{N_T}_{i,j}) -  h^2   \sum_{i,j}  m^{n}_{i,j}\ln(m^{n}_{i,j}) +1  .
  \end{split}
\end{equation}
\end{lemma}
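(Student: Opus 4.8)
The plan is to derive both inequalities from elementary one-dimensional convexity facts applied pointwise in $(i,j)$ and then summed in time; no structural property of the scheme is needed here, only the sign conditions on $\psi$ and, for \eqref{eq:7}, the mass normalization $m^k\in\cK_h$.

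For \eqref{eq:6} I would argue termwise. Fix $i,j$ and $k$ and abbreviate $a=m^{k-1}_{i,j}$, $b=m^{k}_{i,j}$. Since $\psi$ is concave, its graph lies below every tangent line, so $\psi(a)\le \psi(b)+\psi'(b)(a-b)$, that is $\psi(b)-\psi(a)\ge -\psi'(b)(a-b)$. Multiplying by $b\ge 0$ gives $m^{k}_{i,j}(\psi(m^{k}_{i,j})-\psi(m^{k-1}_{i,j}))\ge -m^{k}_{i,j}\psi'(m^{k}_{i,j})(m^{k-1}_{i,j}-m^{k}_{i,j})$. Because $\psi$ is nondecreasing we have $m^{k}_{i,j}\psi'(m^{k}_{i,j})\ge 0$, and since $x\le x_+$ for every real $x$, replacing the difference $(m^{k-1}_{i,j}-m^{k}_{i,j})$ by its positive part only lowers the right-hand side. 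Summing over $i,j$ and over $k$ from $n+1$ to $N_T$ then yields \eqref{eq:6} exactly.

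For \eqref{eq:7} I would exploit the sharper fact that $f(z)=z\ln z$ is convex (not just that $\ln$ is concave). Its tangent-line inequality at $z=m^{k}_{i,j}$ reads $f(m^{k+1}_{i,j})-f(m^{k}_{i,j})\ge (1+\ln m^{k}_{i,j})(m^{k+1}_{i,j}-m^{k}_{i,j})$, which rearranges into the pointwise bound $(m^{k+1}_{i,j}-m^{k}_{i,j})\ln m^{k}_{i,j}\le f(m^{k+1}_{i,j})-f(m^{k}_{i,j})-(m^{k+1}_{i,j}-m^{k}_{i,j})$. I would then sum over $i,j$ and over $k=n,\dots,N_T-1$ and multiply by $h^2$. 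The first two terms telescope into the entropy difference $h^2\sum_{i,j}(m^{N_T}_{i,j}\ln m^{N_T}_{i,j}-m^{n}_{i,j}\ln m^{n}_{i,j})$, while the last term contributes $-h^2\sum_{k}\sum_{i,j}(m^{k+1}_{i,j}-m^{k}_{i,j})$, which is $0$ because $h^2\sum_{i,j}m^{k}_{i,j}=1$ for every $k$ (this is precisely where $m^k\in\cK_h$ enters). This already gives the assertion with $0$ in place of the additive $1$, so \eqref{eq:7} follows a fortiori.

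The one genuine subtlety is the source of the additive constant. A naive attempt to deduce \eqref{eq:7} from \eqref{eq:6} does not work: after the discrete Abel summation that rewrites $\sum_{k}(m^{k+1}-m^{k})\ln m^{k}$ as the entropy difference minus $\sum_{k=n+1}^{N_T} m^{k}(\ln m^{k}-\ln m^{k-1})$, the concavity bound \eqref{eq:6} controls that remaining sum only by $\sum_{k}\sum_{i,j}(m^{k-1}_{i,j}-m^{k}_{i,j})_+$, which is essentially the total variation in time and is not uniformly bounded. The remedy I would stress is that for $\psi=\ln$ one can afford the stronger convexity inequality for $z\ln z$ rather than the generic concavity estimate; combined with the exact conservation of mass it turns the error into a nonpositive relative-entropy-type quantity, so the constant $1$ is merely harmless slack.
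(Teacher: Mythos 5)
Your argument for (\ref{eq:6}) is essentially the paper's: the tangent-line inequality for the concave nondecreasing $\psi$ at $m^k_{i,j}$, plus the sign of $m^k_{i,j}\psi'(m^k_{i,j})$ to pass to the positive part (the paper phrases this as first discarding the nonnegative terms where $m^{k}_{i,j}\ge m^{k-1}_{i,j}$ and then applying concavity to the rest, which is the same computation). For (\ref{eq:7}) your route is genuinely different, and in fact sharper. The paper proves (\ref{eq:7}) by Abel summation followed by (\ref{eq:6}) applied to $\psi(z)=\ln(z+\epsilon)$, and then asserts that the resulting error term $h^2\sum_{k=n+1}^{N_T}\sum_{i,j}(m^{k-1}_{i,j}-m^{k}_{i,j})_+$ is at most $1$ because all $m^k\in\cK_h$; as you point out, that bound is not justified by membership in $\cK_h$ alone, since each time slice can contribute up to $h^2\sum_{i,j}m^{k-1}_{i,j}=1$ and there are $N_T-n$ slices, so the quantity is really a temporal total variation. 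Your replacement --- the tangent-line inequality for the convex function $z\mapsto z\ln z$ at $m^k_{i,j}$, telescoping in $k$, and exact mass conservation $h^2\sum_{i,j}m^k_{i,j}=1$ to annihilate the term $-h^2\sum_k\sum_{i,j}(m^{k+1}_{i,j}-m^{k}_{i,j})$ --- is correct, avoids the $\epsilon$-regularization entirely, and yields (\ref{eq:7}) with $0$ in place of the additive constant $1$. (The paper's strategy can also be repaired without your idea: keeping the signed increment, $\ln x\le x-1$ gives $-m^k_{i,j}\ln\frac{m^k_{i,j}+\epsilon}{m^{k-1}_{i,j}+\epsilon}\le\frac{m^k_{i,j}}{m^k_{i,j}+\epsilon}\,(m^{k-1}_{i,j}-m^{k}_{i,j})$, whose sum is $O(\epsilon(N_T-n))$ after using mass conservation and hence vanishes as $\epsilon\to0$ at fixed $h,\dt$; but your direct argument is cleaner.) In short, the proposal is correct, coincides with the paper on the first assertion, and improves on the paper's written argument for the second.
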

\begin{proof}
Since $\psi$ is non decreasing, 
  \begin{displaymath}
     \sum_{k=n+1}^{ N_T}  \sum_{i,j}   m^k_{i,j}   ( \psi(m^{k}_{i,j}) - \psi(m^{k-1}_{i,j})) \ge 
 \sum_{k=n+1}^{ N_T}  \sum_{i,j}   m^k_{i,j}   ( \psi(m^{k}_{i,j}) - \psi(m^{k-1}_{i,j})) 1_{\{m^{k}_{i,j}< m^{k-1}_{i,j} \} }.
  \end{displaymath}
From the concavity of $\psi$, if $m^{k}_{i,j}<m^{k-1}_{i,j}$, then $\psi(m^{k}_{i,j}) - \psi(m^{k-1}_{i,j})\ge \psi'(m^{k}_{i,j}) (m^{k}_{i,j} -m^{k-1}_{i,j})$. Then (\ref{eq:6}) follows from the last two points. \\
Let us  turn to (\ref{eq:7}): for any $\epsilon>0$, 
  \begin{displaymath}
     \begin{split}
  h^2  \sum_{k=n}^{N_T-1} \sum_{i,j} \left( m^{k+1}_{i,j} -m^{k}_{i,j}\right) \ln(m^k_{i,j}+\epsilon)
 = &h^2   \sum_{i,j}  m^{N_T}_{i,j}\ln(m^{N_T}_{i,j}+\epsilon) -  h^2   \sum_{i,j}  m^{n}_{i,j}\ln(m^{n}_{i,j}+\epsilon) \\ 
&
 -h^2   \sum_{k=n+1}^{ N_T}  \sum_{i,j}   m^k_{i,j}     (\ln(m^{k}_{i,j}+\epsilon) - \ln(m^{k-1}_{i,j}+\epsilon)),
\end{split}
\end{displaymath}
and (\ref{eq:6}) with $\psi(z)=\ln(z+\epsilon)$ yields  
\begin{displaymath}
  \begin{split}
 -h^2   \sum_{k=n+1}^{ N_T}  \sum_{i,j}   m^k_{i,j}     (\ln(m^{k}_{i,j}+\epsilon) - \ln(m^{k-1}_{i,j}+\epsilon)) &\le  
h^2 \sum_{k=n+1}^{ N_T}  \sum_{i,j}   \frac {m^{k}_{i,j} }{ m^{k}_{i,j}+\epsilon} (m^{k-1}_{i,j} -m^{k}_{i,j})_+ \\ &\le  
h^2 \sum_{k=n+1}^{ N_T}  \sum_{i,j}   (m^{k-1}_{i,j} -m^{k}_{i,j})_+ \le 1,    
  \end{split}
\end{displaymath}
where the last estimate comes from the fact that the grid functions $m^k$ all belong to $\cK_h$.
Hence, 
\begin{displaymath}
  h^2 \sum_{k=n}^{N_T-1} \sum_{i,j} \left( m^{k+1}_{i,j} -m^{k}_{i,j}\right) \ln(m^k_{i,j} +\epsilon)
  \le  h^2   \sum_{i,j}  m^{N_T}_{i,j}\ln(m^{N_T}_{i,j}+\epsilon) -  h^2   \sum_{i,j}  m^{n}_{i,j}\ln(m^{n}_{i,j}+\epsilon) +1  .
\end{displaymath} and
 (\ref{eq:7}) is obtained by letting $\epsilon$ tend to $0$.
\end{proof}
\begin{lemma}
  \label{sec:priori-estim-discr-3}
 If $m^{N_T}\in \cK_h$  and ($\mathbf{g_1}$) ($\mathbf{g_3}$) hold, then
there exists a constant $C$ such that,
 for any number $\eta$, $0<\eta<\nu$, 
a solution $ (m^n_{i,j})$ of (\ref{eq:20})  satisfies
\begin{equation}
  \label{eq:14}
  \begin{array}[c]{ll}
    & \ds
 \max_{n}  h^2   \sum_{i,j}  m^{n}_{i,j} |\ln(m^{n}_{i,j})| + 
(\nu -\eta)   
\dt \sum_{k=0}^{ N_T-1} \left| \sqrt{m^k}\right|^2_{H^1( \T_h^2)}
 \\ \le & \ds C + h^2   \sum_{i,j}  m^{N_T}_{i,j} |\ln(m^{N_T}_{i,j})| 
 + \frac {h^2 \dt} {2\eta}  \sum_{k=0}^{ N_T-1} \sum_{i,j}  m^k_{i,j}   \left | g_q\left(x_{i,j}, \left[\nabla_h u^{k+1}\right]_{i,j}\right) \right|^2.
  \end{array}
\end{equation}
For all $\alpha\in [1,2)$, there exists a constant $c$ such that 
\begin{equation}
  \label{eq:16}
  \begin{split}
 & \|m\|_{L^\alpha(Q_{h,\dt})}^{\alpha}
 \\ \le &c\left(1+ h^2   \sum_{i,j}  m^{N_T}_{i,j} |\ln(m^{N_T}_{i,j})| +h^2 \dt \sum_{k=0}^{ N_T-1} \sum_{i,j}  m^k_{i,j}   
\left | g_q(x_{i,j}, \left[\nabla_h u^{k+1}\right]_{i,j}) \right|^2 \right)  .
  \end{split}
\end{equation}
\end{lemma}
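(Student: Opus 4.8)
The plan is to test the discrete Fokker--Planck equation (\ref{eq:20}) against the logarithmic entropy density $\ln(m^k)$. Concretely, I would multiply (\ref{eq:20}) by $h^2\dt\,\ln(m^k_{i,j})$ and sum over $i,j$ and over $k=n,\dots,N_T-1$. Since $m^k$ may vanish, the rigorous version replaces $\ln(m^k)$ by $\ln(m^k+\epsilon)$ and lets $\epsilon\to 0$ at the end, exactly as in the proof of Lemma~\ref{sec:priori-estim-discr-1}; I will suppress this regularization below. The identity $T_1+T_2+T_3=0$ then splits into: a discrete time-derivative term $T_1=h^2\sum_{k}\sum_{i,j}(m^{k+1}_{i,j}-m^k_{i,j})\ln(m^k_{i,j})$, to which the upper bound (\ref{eq:7}) applies, producing the terminal entropy $h^2\sum_{i,j}m^{N_T}_{i,j}\ln(m^{N_T}_{i,j})$, minus the current entropy $h^2\sum_{i,j}m^{n}_{i,j}\ln(m^{n}_{i,j})$, and the harmless $+1$; a diffusion term $T_2$, which a discrete summation by parts rewrites as the nonpositive quantity $-\nu\dt\sum_k h^2\sum_{i,j}[\nabla_h\ln(m^k)]_{i,j}\cdot[\nabla_h m^k]_{i,j}$; and a transport term $T_3$, which (\ref{eq:5}) (applied with $v=u^{k+1}$, $m=m^k$) controls. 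Substituting $-T_1=T_2+T_3$ into (\ref{eq:7}) gives $h^2\sum_{i,j}m^n_{i,j}\ln(m^n_{i,j})\le h^2\sum_{i,j}m^{N_T}_{i,j}\ln(m^{N_T}_{i,j})+1+T_2+T_3$.

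The key algebraic cancellation is then between $T_2$ and the first term on the right-hand side of (\ref{eq:5}): both are multiples of $\dt\sum_k h^2\sum_{i,j}[\nabla_h\ln(m^k)]\cdot[\nabla_h m^k]$, the latter carrying coefficient $\tfrac{\eta}{2}$ and the former $-\nu$, so that choosing $0<\eta<\nu$ leaves the strictly positive coefficient $(\nu-\eta)$ in front of this nonnegative Fisher-information-type quantity, while (\ref{eq:5}) contributes the extra term $\frac{1}{2\eta}h^2\dt\sum_k\sum_{i,j}m^k_{i,j}|g_q(x_{i,j},[\nabla_h u^{k+1}]_{i,j})|^2$. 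To recognize this quantity as the $H^1$ seminorm of $\sqrt{m^k}$, I would invoke the elementary pointwise inequality $(\ln b-\ln a)(b-a)\ge 4(\sqrt b-\sqrt a)^2$ for $a,b>0$, which applied edge by edge gives $h^2\sum_{i,j}[\nabla_h\ln(m^k)]\cdot[\nabla_h m^k]\ge c\,|\sqrt{m^k}|^2_{H^1(\T_h^2)}$. Finally, to pass from the signed entropy to its absolute value as in (\ref{eq:14}), I use $z\ln z\ge-1/e$ together with $m^n\in\cK_h$ (total mass one), which bounds $h^2\sum_{i,j}(m^n_{i,j}\ln(m^n_{i,j}))^-$ by a universal constant. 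Taking the maximum over $n$ on the left --- the right-hand side being independent of $n$, the diffusion sum for a general $n$ being nonnegative, and the full diffusion sum from $k=0$ being recovered by the choice $n=0$ --- yields (\ref{eq:14}).

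For the $L^\alpha$ estimate (\ref{eq:16}), the plan is to interpolate. From (\ref{eq:14}) I extract two facts about $w^k:=\sqrt{m^k}$: the uniform bound $\|w^k\|_{L^2(\T_h^2)}^2=\|m^k\|_{L^1(\T_h^2)}=1$ from $m^k\in\cK_h$, and the time-averaged bound $\dt\sum_k|w^k|^2_{H^1(\T_h^2)}\le C$, so that $\dt\sum_k\|w^k\|^2_{H^1(\T_h^2)}$ is controlled by the right-hand side of (\ref{eq:16}). Writing $\|m^k\|_{L^\alpha(\T_h^2)}^\alpha=\|w^k\|_{L^{2\alpha}(\T_h^2)}^{2\alpha}$ and applying a two-dimensional discrete Gagliardo--Nirenberg inequality (a consequence of the discrete Sobolev inequality recalled above), $\|w\|_{L^{2\alpha}(\T_h^2)}\le C\|w\|_{L^2(\T_h^2)}^{1-\theta}\|w\|_{H^1(\T_h^2)}^{\theta}$ with $\theta=1-1/\alpha$, the normalization $\|w^k\|_{L^2}=1$ reduces the spatial factor to $\|w^k\|^{2(\alpha-1)}_{H^1(\T_h^2)}$. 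Since $\alpha<2$ forces the exponent $2(\alpha-1)<2$, a Hölder inequality in the time index against the bounded quantity $\dt\sum_k\|w^k\|^2_{H^1}$ closes the estimate and produces (\ref{eq:16}).

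The main obstacle, and the crux of the argument, is the diffusion/transport cancellation in the second paragraph: it is essential that the coefficient $\tfrac{\eta}{2}$ produced by the transport estimate (\ref{eq:5}) can be made strictly smaller than the coefficient $\nu$ of the genuine diffusion, which is exactly why the hypothesis $\eta<\nu$ appears, and it is here that one must be careful to match the combinatorial factors relating the $\R^4$ dot product $[\nabla_h\ln(m)]\cdot[\nabla_h m]$ in (\ref{eq:5}) to the seminorm $|\sqrt{m}|_{H^1(\T_h^2)}$. Once this cancellation is secured, the remaining ingredients --- the summation by parts, the entropy sign correction via $z\ln z\ge -1/e$, and the Gagliardo--Nirenberg interpolation --- are routine.
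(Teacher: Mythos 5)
Your proposal is correct and follows essentially the same route as the paper: multiply (\ref{eq:20}) by $\ln(m^k+\epsilon)$, combine (\ref{eq:5}) and (\ref{eq:7}), absorb the $\tfrac{\eta}{2}$ Fisher-information term into the diffusion term, pass to the $H^1$ seminorm of $\sqrt{m}$ by an elementary pointwise inequality (the paper uses $(\sqrt{1+z}-1)^2\le z\ln(1+z)$, you use the equivalent $(b-a)(\ln b-\ln a)\ge 4(\sqrt b-\sqrt a)^2$), and interpolate in space and time for (\ref{eq:16}). The only detail to fix is the bookkeeping you yourself flag: summing $\nu\Delta_h$ by parts against the four-component dot product $[\nabla_h m]\cdot[\nabla_h\ln m]$ yields the coefficient $-\nu/2$ (each edge is counted twice), not $-\nu$, and it is this $1/2$ that makes the final constant come out as $\nu-\eta$ exactly as in (\ref{eq:14}).
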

\begin{proof}
\paragraph{Step 1}
Take $\epsilon>0$ and consider $\hat m_{i,j}^n=m_{i,j}^n+\epsilon$. Note that $\hat m_{i,j}^n>0$ for all $i,j,n$.
  Multiply the second equation of (\ref{eq:21}) by $\ln(\hat m_{i,j}^n)$ and sum for all $i,j$ and $k=n,\dots, N_T-1$:
  \begin{displaymath}
    \begin{split}
0= &       h^2 \sum_{k=n}^{N_T-1} \sum_{i,j} \left( \hat m^{k+1}_{i,j} -\hat m^{k}_{i,j}\right) \ln(\hat m^k_{i,j})
-\frac {\nu h^2 \dt} 2 \sum_{k=n}^{N_T-1} \sum_{i,j} \left[\nabla_h \hat m^k\right]_{i,j}\cdot \left[\nabla_h \ln(\hat m^k)\right]_{i,j}
\\ & + h^2 \dt  \sum_{k=n}^{N_T-1} \sum_{i,j} \cT_{i,j}(u^{k+1}, m^k) \ln( \hat m^k_{i,j}) .
    \end{split}
  \end{displaymath}
From (\ref{eq:5}) and (\ref{eq:7}), we deduce that
\begin{displaymath}
  \begin{split}
&    h^2   \sum_{i,j}  \hat m^{n}_{i,j}\ln(\hat m^{n}_{i,j}) +  
(\nu-\eta) 
\frac {h^2 \dt} 2 \sum_{k=n}^{N_T-1} \sum_{i,j} \left[\nabla_h \hat m^k\right]_{i,j}\cdot \left[\nabla_h \ln(\hat m^k)\right]_{i,j} \\
\le  &  1+ h^2   \sum_{i,j}  \hat m^{N_T}_{i,j}\ln(\hat m^{N_T}_{i,j})  
 + \frac { h^2 \dt} {2\eta} \sum_{k=n}^{N_T-1} \sum_{i,j}  \hat m^k_{i,j}   \left | g_q(x_{i,j}, \left[\nabla_h u^{k+1}\right]_{i,j}) \right|^2,
  \end{split}
\end{displaymath}
and since $\hat m_{i,j}^k \ln(\hat m^{k}_{i,j})\ge -e^{-1} $,
\begin{equation}
  \label{eq:15}
  \begin{array}[c]{ll}
    & \ds
 \max_{n}  h^2   \sum_{i,j}  \hat m^{n}_{i,j} |\ln(\hat m^{n}_{i,j})| + \frac {\nu -\eta} 2
 h^2 \dt   \sum_{k=0}^{ N_T-1}  \sum_{i,j} \left[ \nabla_h \hat m^k\right]_{i,j}\cdot \left[\nabla_h \ln(\hat m^k)\right]_{i,j}
 \\ \le & \ds C + h^2   \sum_{i,j}  \hat m^{N_T}_{i,j} |\ln(\hat m^{N_T}_{i,j})| 
 + \frac {h^2 \dt} {2\eta}  \sum_{k=0}^{ N_T-1} \sum_{i,j}  \hat m^k_{i,j}   \left | g_q(x_{i,j}, \left[\nabla_h u^{k+1}\right]_{i,j}) \right|^2.
  \end{array}
\end{equation}
Consider now the quantity $| \sqrt{\hat m^k}|^2_{H^1(\T_h^2)}$, i.e.
\begin{displaymath}
 | \sqrt{\hat m^k}|^2_{H^1(\T_h^2)}=  \sum_{i,j} \left(\sqrt{\hat m^k_{i+1,j}}-\sqrt{\hat m^k_{i,j}}\right)^2 +  
\sum_{i,j} \left(\sqrt{\hat m^k_{i,j+1}}-\sqrt{\hat m^k_{i,j}}\right)^2.
\end{displaymath}
Since $\hat m^k_{i,j}>0$, we can write
  $\left(\sqrt{\hat m^k_{i+1,j}}-\sqrt{\hat m^k_{i,j}}\right)^2 = \hat m^k_{i,j} \left( \sqrt{1+ h \frac {(D_1^+ \hat m^k)_{i,j}} {\hat m^k_{i,j}}}-1 \right)^2$ where 
$(D_1^+ \hat m^k)_{i,j}$ is defined in (\ref{eq:8}). Since the inequality $ (\sqrt{1+z}-1)^2\le z \ln (1+z)$ holds for any number $z\ge -1$, we infer that  \begin{displaymath}
  \begin{split}
  \left(\sqrt{\hat m^k_{i+1,j}}-\sqrt{\hat m^k_{i,j}}\right)^2  &\le  h(D_1^+ \hat m^k)_{i,j}  \ln(1+ h \frac {(D_1^+ \hat m)_{i,j}} {\hat m^k_{i,j}})\\
  &=     h(D_1^+ \hat m^k)_{i,j} \left(  \ln(\hat m^k_{i+1,j} )- \ln(\hat m^k_{i,j})\right) \\ &=   h^2(D_1^+ \hat m^k)_{i,j} (D_1^+ \ln(\hat m^k))_{i,j}.
  \end{split}
\end{displaymath}
Since  the same kind of estimate holds for $ \left(\sqrt{\hat m^k_{i,j+1}}-\sqrt{\hat m^k_{i,j}}\right)^2$, we obtain that
\begin{equation}
  \label{eq:13}
 | \sqrt{\hat m^k}|^2_{H^1(\T_h^2)}\le h^2 \sum_{i,j} \left(D_h \hat m^k\right)_{i,j}\cdot \left(D_h \ln(\hat m^k)\right)_{i,j}  ,
\end{equation}
and the fact that $\hat m$ satisfies (\ref{eq:14}) follows from (\ref{eq:15}) and (\ref{eq:13}).\\
Let us now prove  (\ref{eq:16}): consider $\alpha\in [1,2)$: 
there exists a unique number $p\ge 1$ such that $\frac 1 \alpha = \frac 1 2 + \frac 1 {2p}$: 
we have the interpolation inequality $\|\hat m^k\|_{L^ \alpha (\T_h^2)}\le \|\hat m^k\|^{\frac 1 2}_{L^1 (\T_h^2)} 
 \|\hat m^k\|^{\frac 1 2}_{L^p (\T_h^2)}$. But $ \|\hat m^k\|_{L^1 (\T_h^2)}=1+\epsilon$ and 
$ \|\hat m^k\|^{\frac 1 2}_{L^p (\T_h^2)} =   \|\sqrt{\hat m^k}\|_{L^{2p} (\T_h^2)}$.
From the discrete Sobolev inequalities, we deduce that
\begin{displaymath}
  \|\hat m^k\|^\alpha_{L^ \alpha (\T_h^2)}\le  (1+\epsilon) ^{\frac \alpha 2} \|\sqrt{\hat m^k}\|^\alpha_{L^{2p} (\T_h^2)}\le
 C \|\sqrt{\hat m^k}\|^\alpha_{H^{1} (\T_h^2)} \le C\left (1 + \|\sqrt{\hat m^k}\|^2_{H^{1} (\T_h^2)}   \right),
\end{displaymath}
which yields that $\hat m$ satisfies (\ref{eq:16}) by summing for all $k$ and using (\ref{eq:14}).
 \paragraph{Step 2}
We obtain that $m$ satisfies  (\ref{eq:14}) and (\ref{eq:16}) by letting $\epsilon$ tend to $0$.
\end{proof}
\begin{corollary}
  \label{sec:priori-estim-discr-2}
With the same assumptions as in Lemma~\ref{sec:priori-estim-discr-3}, for any $\alpha\in [1,4/3)$,
there exists a constant $c$ such that 
\begin{equation}
\label{eq:26}
  \begin{split}
 & \|D_h m\|_{L^{\alpha} (Q_{h,\dt})}^{\alpha} +
\dt \sum_{k=0}^{N_T-1} \left \| \frac {m^{k+1}-m ^k} {\dt} \right \|^\alpha _{W^{-1,\alpha}(\T_h^2) )} 
  \\ \le & c\left(1+ h^2   \sum_{i,j}  m^{N_T}_{i,j} |\ln(m^{N_T}_{i,j})| +h^2 \dt \sum_{k=0}^{ N_T-1} \sum_{i,j}  m^k_{i,j}   
\left | g_q(x_{i,j}, \left[\nabla_h u^{k+1}\right]_{i,j}) \right|^2 \right)  .
  \end{split}
\end{equation}
\end{corollary}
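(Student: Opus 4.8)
The plan is to bound the two terms on the left of \eqref{eq:26} separately, reducing both to quantities already controlled in Lemma~\ref{sec:priori-estim-discr-3}: the time-summed bound $\dt\sum_k|\sqrt{m^k}|_{H^1(\T_h^2)}^2$ from \eqref{eq:14} and the space-time bound $\|m\|_{L^\beta(Q_{h,\dt})}^\beta$ from \eqref{eq:16}, valid for every $\beta<2$. Write $\cE$ for the right-hand side of \eqref{eq:26}; since $\cE\ge 1$, products of powers of $\cE$ with exponents summing to $1$ are again bounded by $C\cE$. I would fix $\eta=\nu/2$ in \eqref{eq:14}.

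First I would estimate $\|D_h m\|_{L^\alpha(Q_{h,\dt})}^\alpha$. The key is the discrete chain rule $(D_1^+ m^k)_{i,j}=(D_1^+\sqrt{m^k})_{i,j}\,(\sqrt{m^k_{i+1,j}}+\sqrt{m^k_{i,j}})$ (and likewise for $D_2^+$), which splits the gradient of $m$ into a factor carrying the $H^1$ regularity of $\sqrt{m^k}$ and a factor that is a power of $m^k$. Hölder in space with exponents $2/\alpha$ and $2/(2-\alpha)$ gives, for each $k$,
\[ h^2\sum_{i,j}|(D_h m^k)_{i,j}|^\alpha\le C\,|\sqrt{m^k}|_{H^1(\T_h^2)}^{\alpha}\Big(h^2\sum_{i,j}(m^k_{i,j})^{r}\Big)^{\frac{2-\alpha}2},\qquad r=\frac{\alpha}{2-\alpha}, \]
after using periodicity to replace $(\sqrt{m^k_{i+1,j}}+\sqrt{m^k_{i,j}})^{2r}$ by a multiple of $(m^k)^r$. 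A second Hölder, now in the time index with the same exponents, together with \eqref{eq:14} and \eqref{eq:16} applied with $\beta=r$, yields $\|D_h m\|_{L^\alpha(Q_{h,\dt})}^\alpha\le C\,\cE^{\alpha/2}\cE^{(2-\alpha)/2}=C\cE$. This is legitimate exactly when $r<2$, i.e. $\alpha<4/3$, which is where the admissible range first enters.

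Next I would estimate the dual norm of the time increment. From the discrete Fokker--Planck equation \eqref{eq:20},
\[ \frac{m^{k+1}-m^{k}}{\dt}=-\nu(\Delta_h m^{k})-\cT(u^{k+1},m^{k}), \]
so it suffices to bound each summand in $W^{-1,\alpha}(\T_h^2)$, pairing against grid functions $w$ with $\|w\|_{W^{1,\alpha'}(\T_h^2)}\le 1$, $\tfrac1\alpha+\tfrac1{\alpha'}=1$. For the Laplacian, discrete summation by parts gives $(-\nu\Delta_h m^{k},w)_{L^2(\T_h^2)}=\nu\,(D_h m^{k},D_h w)_{L^2(\T_h^2)}$, whence Hölder yields $\|\nu\Delta_h m^{k}\|_{W^{-1,\alpha}(\T_h^2)}\le\nu\|D_h m^{k}\|_{L^\alpha(\T_h^2)}$; summed in time this is controlled by the gradient bound just obtained. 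For the transport term, the defining identity of $\cT$ behind \eqref{eq:19} gives
\[ (\cT(u^{k+1},m^{k}),w)_{L^2(\T_h^2)}=-h^2\sum_{i,j}m^{k}_{i,j}\,g_q(x_{i,j},[\nabla_h u^{k+1}]_{i,j})\cdot[\nabla_h w]_{i,j}, \]
and, after relabelling the shifted one-sided differences in $[\nabla_h w]$ by periodicity and applying Hölder, $\|\cT(u^{k+1},m^{k})\|_{W^{-1,\alpha}(\T_h^2)}\le C\big(h^2\sum_{i,j}(m^{k}_{i,j})^{\alpha}|g_q(x_{i,j},[\nabla_h u^{k+1}]_{i,j})|^{\alpha}\big)^{1/\alpha}$.

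To sum the transport contribution in time I would use the algebraic splitting $(m^{k})^{\alpha}|g_q|^{\alpha}=(m^{k}|g_q|^2)^{\alpha/2}(m^{k})^{\alpha/2}$ followed by Hölder in $(k,i,j)$ with exponents $2/\alpha$ and $2/(2-\alpha)$; this isolates $h^2\dt\sum m^{k}|g_q|^2$, one of the terms in $\cE$, from $h^2\dt\sum (m^{k})^{r}=\|m\|_{L^{r}(Q_{h,\dt})}^{r}$, which is again controlled by \eqref{eq:16} provided $r<2$. Collecting the Laplacian and transport contributions and invoking the gradient bound gives the second term of \eqref{eq:26}, completing the proof. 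The main obstacle is the exponent: both halves of the argument rest on controlling $m$ in $L^{r}(Q_{h,\dt})$ with $r=\alpha/(2-\alpha)$, and \eqref{eq:16} supplies such a bound only for $r<2$; this single requirement is what pins the range to $\alpha\in[1,4/3)$. Beyond that, the work is careful bookkeeping: keeping the summation by parts and the periodic relabelling of shifted differences consistent, and matching each Hölder pairing of $L^\alpha$ gradients against $W^{1,\alpha'}$ test functions to the dual-norm definition.
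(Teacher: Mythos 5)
Your argument is correct and follows essentially the same route as the paper: both halves reduce to the entropy/dissipation bound \eqref{eq:14} and the $L^{r}$ bound \eqref{eq:16} with $r=\alpha/(2-\alpha)<2$, and the time-increment is handled through the weak form of \eqref{eq:20} exactly as in the paper. The only differences are cosmetic --- you factor $D_1^+m=(D_1^+\sqrt{m})(\sqrt{m_{i+1,j}}+\sqrt{m_{i,j}})$ where the paper writes $|D_1^+m|^{\alpha}$ via the ratio $(D_1^+m)/(D_1^+\ln m)\le m_{i,j}+m_{i+1,j}$ (the two are linked by the paper's own inequality \eqref{eq:13}), and you apply H\"older to the transport term globally in $(k,i,j)$ rather than slice by slice.
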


\begin{proof}
Take $\alpha\in [1,4/3)$.
 We start by observing that \[\|D_h m\|_{L^{\alpha} (Q_{h,\dt})}^{\alpha} \le
 C h^2 \dt \sum_{k=0}^{N_T -1} \sum_{i,j} |D_1^+ m^k|_{i, j}^{\alpha} +|D_2^+ m^k|_{i, j}^{\alpha}. \]
Let us  estimate $ \sum_{k=0}^{N_T -1} \sum_{i,j} |D_1^+ m^k|_{i, j}^{\alpha}$ by some quantity depending on
\\ $ \sum_{k=0}^{N_T -1} \sum_{i,j} (D_1^+ m^k)_{i,j}(D_1^+ \ln (m^k))_{i,j}$: a H{\"o}lder inequality yields that
 \begin{displaymath}
   \begin{split}
       &  \sum_{k=0}^{N_T -1} \sum_{i,j} |D_1^+ m^k|_{i, j}^{\alpha} \\ \le &\left(  \sum_{k=0}^{N_T -1} \sum_{i,j}
 |D_1^+ m^k|_{i, j}^2 \frac {(D_1^+ \ln(m^k))_{i, j}}{(D_1^+ m^k)_{i, j}} \right)^{\frac \alpha 2}  \left(   
\sum_{k=0}^{N_T -1} \sum_{i,j}
\left (\frac {(D_1^+ m^k)_{i, j}}{(D_1^+ \ln(m^k))_{i, j}}\right)^{\frac \alpha {2-\alpha} }    \right)^{1-\frac \alpha 2}\\
 =&  \left(  \sum_{k=0}^{N_T -1} \sum_{i,j} (D_1^+ m^k)_{i, j}(D_1^+ \ln(m^k))_{i, j} \right)^{\frac \alpha 2} 
 \left(   
\sum_{k=0}^{N_T -1} \sum_{i,j}
\left (\frac{(D_1^+ m^k)_{i, j}} {(D_1^+ \ln(m^k))_{i, j}}\right)^{\frac \alpha {2-\alpha} }    \right)^{1-\frac \alpha 2}.
\end{split}
\end{displaymath}
Standard calculus yields that
 $  \frac {(D_1^+ m^k)_{i, j}} {(D_1^+ \ln(m^k))_{i, j}} \le \max ( m^k_{i,j}, m^k_{i+1,j} )\le  m^k_{i,j}+  m^k_{i+1,j} $, therefore
\begin{displaymath}
 \begin{split}
       & h^2 \dt  \sum_{k=0}^{N_T -1} \sum_{i,j} |D_1^+ m^k|_{i, j}^{\alpha} \\ \le &
 h^2 \dt 
 \left(  \sum_{k=0}^{N_T -1} \sum_{i,j} (D_1^+ m^k)_{i, j}(D_1^+ \ln(m^k))_{i, j} \right)^{\frac \alpha 2} 
 \left(   
\sum_{k=0}^{N_T -1} \sum_{i,j} 
( m^k_{i,j}+ m^k_{i+1,j})^{\frac \alpha {2-\alpha} }     \right)^{1-\frac \alpha 2}\\
\le &  C \|m\|_{L^{\frac \alpha {2-\alpha}     }(Q_{h,\dt})}^{ \frac \alpha 2}
\left(  h^2 \dt  \sum_{k=0}^{N_T -1} \sum_{i,j} (D_1^+ m^k)_{i, j}(D_1^+ \ln(m^k))_{i, j} \right)^{\frac \alpha 2}  .
\end{split}
\end{displaymath}
Note that $1\le \frac \alpha {2-\alpha}  <2$.
The estimate on $ \|D_h m\|_{L^{\alpha} (Q_{h,\dt})}^{\alpha}$ follows by using the same argument for bounding $ \sum_{k=0}^{N_T -1} \sum_{i,j} |D_2^+ m^k|_{i, j}^
\alpha$ and then  (\ref{eq:15}) and (\ref{eq:16}).\\
From (\ref{eq:20}), we deduce that for all grid functions $w$, 
\begin{displaymath}
\begin{split}
  &  h^2 \sum_{i,j}    w_{i,j} \frac {m^{n+1}_{i,j}- m^{n}_{i,j}} {\dt} 
\\ = & \nu  h^2  \sum_{i,j}  (\nabla_h m^{n})_{i,j}\cdot  (\nabla_h w)_{i,j}
  +   h^2  \sum_{i,j}    m^{n}_{i,j}  g_q(x_{i,j}, [\nabla_h u^{n+1}]_{i,j}) \cdot  [\nabla_h w]_{i,j}.    
  \end{split}  
\end{displaymath}
  Hence
\begin{displaymath}
  \begin{split}
&\left (\frac {m^{n+1}- m^{n}} {\dt}, w\right )_{L^2  (\T_h^2) } 
 \le      \nu  |m^n|_{W^{1, \alpha}(\T_h^2)} |w|_{W^{1, \alpha'}(\T_h^2)}\\ & + 
 \left( h^2 \sum_{i,j}    m^{n}_{i,j}  |g_q(x_{i,j}, [\nabla_h u^{n+1}]_{i,j})|^2\right)^{\frac 1 2} 
\|\sqrt {m^n}\|_{ L^{\frac {2\alpha}{2-\alpha}} (\T_h^2 ) }  |w|_{W^{1, \alpha'}  (\T_h^2 )},
  \end{split}
\end{displaymath}
and
\begin{equation}
  \label{eq:32}
  \begin{split}
   &\left( \dt \sum_{k=0}^{N_T-1} \left \| \frac {m^{k+1}-m ^k} {\dt} \right \|^\alpha _{W^{-1,\alpha}(\T_h^2) } \right)^{\frac 1 \alpha}
\le \nu \|D_h m\|_{L^{\alpha} (Q_{h,\dt})}\\ & +
 \left( h^2 \dt  \sum_{k=0}^{ N_T-1} 
\sum_{i,j}  m^k_{i,j}   \left | g_q\left(x_{i,j}, \left[\nabla_h u^{k+1}\right]_{i,j}\right) \right|^2
\right)^{\frac 1 2} \| m\|^{\frac 1 2}_{{L^{\frac {\alpha}{2-\alpha}} (Q_{h,\dt})} }.
  \end{split}
\end{equation}
Note that $\frac {\alpha}{2-\alpha}<2$. From (\ref{eq:16}), 
\begin{equation}
  \label{eq:33}
  \begin{split}
       &\| m\|^{\frac 1 2}_{{L^{\frac {\alpha}{2-\alpha}} (Q_{h,\dt})} } \\ \le &
c\left(1+ h^2   \sum_{i,j}  m^{N_T}_{i,j} |\ln(m^{N_T}_{i,j})| +h^2 \dt \sum_{k=0}^{ N_T-1} \sum_{i,j}  m^k_{i,j}   
\left | g_q(x_{i,j}, \left[\nabla_h u^{k+1}\right]_{i,j}) \right|^2 \right)^{\frac {2-\alpha} {2\alpha}} .
  \end{split}
\end{equation}
The desired estimate on $\dt \sum_{k=0}^{N_T-1} \left \| \frac {m^{k+1}-m ^k} {\dt} \right \|^\alpha _{W^{-1,\alpha}(\T_h^2) }   $ follows from 
(\ref{eq:32})-(\ref{eq:33}) and from the estimate on
$ \|D_h m\|_{L^{\alpha} (Q_{h,\dt})}^{\alpha}$.
\end{proof}

Collecting the above results together with Lemma~\ref{sec:priori-estim-discr-6}, we obtain the following conclusion:
\begin{theorem}
  \label{sec:priori-estim-from-1}
  If $F$ is continuous and  bounded from below by a constant $\underline F$,
if ($\mathbf{g_1}$), ($\mathbf{g_3}$), ($\mathbf{g_5}$) hold, if  $u_0$ is continuous,
then there exists a constant $C$ such that a solution $(u,m)$ of (\ref{eq:21})-~(\ref{eq:22}) 
 satisfies (\ref{eq:28})-(\ref{eq:42}),
 and for all $\alpha\in (1,4/3)$,
  \begin{equation}\label{eq:34}
 \|D_h m\|_{L^{\alpha} (Q_{h,\dt})}^{\alpha} +
\left( \dt \sum_{k=0}^{N_T-1} \left \| \frac {m^{k+1}-m ^k} {\dt} \right \|^\alpha _{W^{-1,\alpha}(\T_h^2) )} \right)^{\frac 1 \alpha}
\le C.
  \end{equation}
\end{theorem}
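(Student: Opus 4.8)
The plan is to treat this statement as an assembly of the estimates established in the preceding lemmas, since essentially all the analytic work has already been carried out. First I would invoke Lemma~\ref{sec:priori-estim-discr-6}: its hypotheses ($\mathbf{g_1}$, $\mathbf{g_3}$, $\mathbf{g_5}$, $F$ bounded below, $u_0$ continuous, $m_T$ bounded above by $\bar m_T$) coincide with the running assumptions in force here, so (\ref{eq:28})--(\ref{eq:42}) hold with a constant independent of $h$ and $\dt$. The consequence I most want to carry forward is (\ref{eq:28}), namely the uniform bound
\[
h^2\dt\sum_{k=0}^{N_T-1}\sum_{i,j} m^k_{i,j}\,\bigl|g_q(x_{i,j},[\nabla_h u^{k+1}]_{i,j})\bigr|^2\le C,
\]
which controls the weighted discrete gradient of $u$ against the density.

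Next I would check that the discrete relative entropy of the terminal datum is bounded independently of the mesh. From the definition (\ref{eq:22}), $m^{N_T}_{i,j}$ is a cell average of $m_T$, so $0\le m^{N_T}_{i,j}\le \bar m_T:=\|m_T\|_{L^\infty(\T^2)}$. Since $s\mapsto s|\ln s|$ is continuous, hence bounded on $[0,\bar m_T]$ by a constant depending only on $\bar m_T$, and since the grid carries $N_h^2=1/h^2$ points, summing the pointwise bound and multiplying by $h^2$ yields
\[
h^2\sum_{i,j} m^{N_T}_{i,j}\,\bigl|\ln(m^{N_T}_{i,j})\bigr|\le \sup_{s\in[0,\bar m_T]} s|\ln s|=:C',
\]
a constant independent of $h$ and $\dt$. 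This is the one place where the $L^\infty$ bound on $m_T$ is genuinely needed, and it is the step to watch: without that bound the terminal entropy could diverge as $h\to 0$ and (\ref{eq:34}) would break down.

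Finally I would combine these two bounds with Corollary~\ref{sec:priori-estim-discr-2}. The right-hand side of (\ref{eq:26}) is
\[
c\Bigl(1 + h^2\sum_{i,j} m^{N_T}_{i,j}|\ln(m^{N_T}_{i,j})| + h^2\dt\sum_{k}\sum_{i,j} m^k_{i,j}\,|g_q(x_{i,j},[\nabla_h u^{k+1}]_{i,j})|^2\Bigr),
\]
and both nontrivial terms inside the parentheses have just been bounded uniformly in $h,\dt$ --- the entropy term by $C'$ and the gradient term by (\ref{eq:28}). Hence this right-hand side is at most a constant $C$, which is precisely (\ref{eq:34}) for every $\alpha\in(1,4/3)$. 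Since no new estimate is required, I expect no real obstacle beyond matching hypotheses correctly; the substantive difficulties have already been absorbed into Lemma~\ref{sec:priori-estim-discr-6} (the structural energy identity (\ref{eq:39})) and into Corollary~\ref{sec:priori-estim-discr-2} (the H\"older--interpolation argument bounding $D_h m$ and the time increments).
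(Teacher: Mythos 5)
Your proposal is correct and follows essentially the same route as the paper, which presents this theorem precisely as a collection of Lemma~\ref{sec:priori-estim-discr-6} and Corollary~\ref{sec:priori-estim-discr-2}, with the terminal entropy term $h^2\sum_{i,j}m^{N_T}_{i,j}|\ln(m^{N_T}_{i,j})|$ controlled exactly as you do via the $L^\infty$ bound on $m_T$ and the cell-average definition (\ref{eq:22}). The only (trivial) point left implicit is that (\ref{eq:26}) bounds $\dt\sum_k\|(m^{k+1}-m^k)/\dt\|^{\alpha}_{W^{-1,\alpha}(\T_h^2)}$ without the outer power $1/\alpha$ appearing in (\ref{eq:34}), which changes nothing since a bound on $X^{\alpha}$ gives one on $X$.
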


\section{$L^1$-compactness results}
\label{sec:l1-comp-results-3}
In this section we prove the $L^1$-compactness of $D_hu$ whenever the discrete heat equation has bounded $L^1$ data. More precisely, we assume that $u= (u^n)_{n=0,\dots,N_T}$ satisfies
\begin{equation}\label{eql1}
\ds   \frac {u^{n+1}_{i,j}- u^{n}_{i,j}} {\dt}  -\nu (\Delta_h u^{n+1})_{i,j} =   f_{i,j}^n
\end{equation}
for all $0\le i,j< N_h$  and all $n$, $0\le n < N_T$,
where the data $f=(f_{i,j}^n)$ and  the initial conditions $u^0= (u_{i,j}^0)$  are supposed to satisfy
\begin{equation}\label{l1bound}S
\|u ^0\|_{L^1(\T_h^2)} + \|f \|_{L^1(Q_{h,\dt})} \leq c
\end{equation}
for some $c$ independent of $h$ and $\dt$. 
In what follows, we reconstruct functions on $Q$ from the grid functions $u$, and we prove the convergence of these functions 
as $h$ and $\dt$ tend to $0$, at least for subsequences. Lemma \ref{l1-comp} below is concerned with  piecewise constant functions 
built using $u$. It is similar to results that can be found  in Gallou{\"e}t et al, 
see e.g. \cite{MR1652593,MR1804748,MR2904585} in the context of finite volume methods.
 Lemma~\ref{sec:l1-comp-results} deals with approximations of the gradient with respect to $x$.
It  seems new to the best of our knowledge and may have an independent interest. 
\begin{lemma}\label{l1-comp} 
Let $u_{h,\dt}$ be the piecewise constant function which takes the value $u_{i,j}^{n+1} $ in $(t_n,t_{n+1})\times (ih - h/2, ih+h/2) \times (jh -h/2, jh+h/2)$.  There exists a subsequence of $h$ and $\dt$ (not relabeled) and a function $\tilde u$ such that $u_{h,\dt} \to \tilde u$ in 
 in  $L^\beta(Q)$ for all $\beta\in [1,2)$.
Moreover, $\tilde u\in  L^\alpha(0,T;W^{1,\alpha}(\T^2))$ for any $\alpha\in [1,\frac43)$, and  there exist  a bounded Radon measure $\tilde \mu$ in $Q$ and a bounded Radon measure $\tilde \mu_0$ in $\T^2$ such that $\tilde u$ is the unique solution of  
\begin{equation}
  \label{eq:29}
\begin{cases}
\partial_t \tilde u -\nu \Delta \tilde u= \tilde \mu & \hbox{in $Q$,} \\
\tilde u(0,\cdot)= \tilde 
\mu^0 & \hbox{in $\T^2$}.
\end{cases}  
\end{equation}
\end{lemma}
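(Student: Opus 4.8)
The plan is to establish convergence and identify the limit in three movements: first a priori bounds, then compactness, then identification of the limiting equation. For the bounds, note that \eqref{eql1} together with the $L^1$-data assumption \eqref{l1bound} is exactly the hypothesis of the Fokker--Planck-type estimates proved above, except now applied to the discrete heat equation with generic right-hand side $f$. I would first observe that \eqref{eql1} is a discrete linear parabolic equation, so by the same duality/interpolation machinery used in Lemma~\ref{sec:priori-estim-discr-3} and Corollary~\ref{sec:priori-estim-discr-2} (transposed to the forward-in-time heat operator), one obtains uniform bounds
\begin{equation*}
\|D_h u\|_{L^\alpha(Q_{h,\dt})} + \Bigl(\dt \sum_{n} \|\partial_{t,\dt} u^n\|^\alpha_{W^{-1,\alpha}(\T_h^2)}\Bigr)^{1/\alpha} \le C
\end{equation*}
for every $\alpha\in[1,\tfrac43)$, with $C$ depending only on the constant $c$ in \eqref{l1bound}. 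The exponent $\tfrac43$ is the familiar parabolic-regularity threshold for $L^1$ data in dimension two (space-time dimension three), coming from the Aubin--Lions/Gagliardo--Nirenberg interpolation between the $L^\infty_t L^1_x$ mass control and the $L^1_t W^{1,1}_x$ gradient control.

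Next I would set up the reconstruction and compactness. Define $u_{h,\dt}$ as the stated piecewise-constant interpolant, and analogous piecewise-constant reconstructions of $D_h u$ and of the discrete time-increment. The uniform $L^\alpha(0,T;W^{1,\alpha})$ bound plus the $W^{-1,\alpha}$ bound on the discrete time derivative give, via a discrete Aubin--Lions--Simon compactness argument (as in the Gallou\"et et al.\ framework cited), relative compactness of $u_{h,\dt}$ in $L^\beta(Q)$ for $\beta<2$. Here the key technical point, and the one I expect to be the main obstacle, is the \emph{consistency of the reconstructed gradient}: one must show that the piecewise-constant reconstruction of $D_h u$ converges weakly to the genuine distributional gradient $D\tilde u$ of the $L^\beta$-limit $\tilde u$. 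This is not automatic for finite-difference gradients on a staggered/one-sided stencil; it requires checking that the discrete integration-by-parts identity passes to the limit against smooth test functions, i.e.\ that the translation errors (comparing $u_{i+1,j}$ with $u_{i,j}$ after reconstruction) vanish as $h\to0$. Once this is done, lower semicontinuity of the norm under weak convergence yields $\tilde u\in L^\alpha(0,T;W^{1,\alpha}(\T^2))$.

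Finally I would identify the limit equation \eqref{eq:29}. The uniform $L^1(Q_{h,\dt})$ bound on $f$ means the reconstructed right-hand side forms a bounded sequence in $L^1$, hence (up to a further subsequence) converges weakly-$*$ as measures to some bounded Radon measure $\tilde\mu$ on $Q$; similarly the initial data $u^0$, bounded in $L^1(\T_h^2)$, yield a limiting measure $\tilde\mu^0$ on $\T^2$. Multiplying the discrete equation \eqref{eql1} by a smooth test function, performing discrete summation by parts, and invoking the gradient-consistency just established together with the consistency of $\Delta_h$ with $\Delta$, every term passes to the limit and one recovers the weak formulation of $\partial_t\tilde u-\nu\Delta\tilde u=\tilde\mu$ with initial trace $\tilde\mu^0$. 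Uniqueness of $\tilde u$ as the solution of this linear parabolic problem with measure data follows from the standard duality argument (test against solutions of the smooth adjoint backward heat equation), which pins down $\tilde u$ uniquely even though the measures $\tilde\mu,\tilde\mu^0$ and the subsequence may not be unique. The delicate renormalization needed to handle the measure data—ensuring $\tilde u$ is the \emph{renormalized/duality} solution and not merely a distributional one—parallels the Boccardo--Murat--Puel techniques flagged in the introduction and is where I would be most careful.
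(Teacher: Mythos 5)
Your proposal follows essentially the same route as the paper: the uniform bounds $\|D_h u\|_{L^\alpha(Q_{h,\dt})}\le C$ and the $W^{-1,\alpha}$ control of the discrete time derivative obtained by the techniques of \S~\ref{sec:priori-estim-discr}, compactness via the discrete Aubin--Simon lemma of Gallou\"et et al., weak-$*$ convergence of the reconstructed data and initial condition to bounded Radon measures $\tilde\mu$, $\tilde\mu^0$, and identification of the limit by testing \eqref{eql1} against smooth functions, with uniqueness of $\tilde u$ for the resulting weak formulation. The only cosmetic difference is that you flag a renormalization issue for the measure-data problem, whereas the paper simply works with the (unique) duality/weak solution of the linear heat equation; this does not change the argument.
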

\begin{proof}
  Using the  $L^1$ bounds on the data, we may show with the same argument as in \S~\ref{sec:priori-estim-discr} that
  \begin{displaymath}
\|u\|_{L^\beta(Q_{h,\dt})}+ \left (\dt \sum_{ n=1} ^ {N_T}   | D_hu^n |^\alpha  _{L^\alpha(\T_h^2)}\right) ^{\frac 1 \alpha} \leq c
  \end{displaymath}
for any $\beta\in [1,2)$ and $\alpha\in [1,\frac43)$. From this estimate and (\ref{eql1}), we deduce that $\dt \sum_{ n=1} ^ {N_T}   \|  \frac {u^{n+1}-u ^n} {\dt} \|  _{W^{-1,\alpha}(\T_h^2) } $
 is uniformly bounded. \\
Recall that $u_{h,\dt}$ is the piecewise constant function which takes the value $u_{i,j}^{n+1} $ in $(t_n,t_{n+1})\times (ih - h/2, ih+h/2) \times (jh -h/2, jh+h/2)$.  We can apply the discrete Aubin-Simon lemma in \cite{MR2904585} (Theorem 3.1): up to the extraction of a subsequence,  $u_{h,\dt}$ converges to a function $\tilde u$ in $L^1(Q)$, and in fact in  $L^\beta(Q)$ for all $\beta\in [1,2)$.
Moreover, $\tilde u\in  L^\alpha(0,T;W^{1,\alpha}(\T^2))$ for any $\alpha\in [1,\frac43)$.
\\
Let  $f_{h,\dt}$  be the piecewise constant  function on  which takes the values $f_{i,j}^{n}$ in $(t_n,t_{n+1})\times (ih - h/2, ih+h/2) \times (jh -h/2, jh+h/2)$. Up to the extraction of a subsequence,    $f_{h\dt}$ converges in the weak-$*$ topology to some bounded Radon measure $\tilde \mu$ on $Q$. Call $u^0_h$ the  piecewise constant function on $\T^2$ which takes the values $u^0_{i,j}$ 
 in $(ih - h/2, ih+h/2) \times (jh -h/2, jh+h/2)$.
We may assume that $u^0_{h}$ converges to  a bounded measure $\tilde \mu^0$ on $\T^2$. 
In particular, testing (\ref{eql1}) with smooth functions and passing to the limit, this implies that $\tilde u$ satisfies
 \begin{displaymath}
\int_0^T \int_{\T^2} \tilde u\left( -\vfi_t-\nu \Delta \vfi\right)\, dxdt= \int_0^T\int_{\T^2} \vfi\, d\tilde\mu + \int_{\T^2} \vfi(0)d\tilde\mu^0\,,   
 \end{displaymath}
for every $\vfi\in C^2(\overline Q)$ such that $\vfi(T)=0$. Notice that $\tilde u$ is  the unique solution of the above weak formulation.
\end{proof}

We now define an approximation of $D\tilde u$ from the grid function $u$. For a real number $z$, let ${ \rm floor}(z)$ be the largest integer that does not exceed $z$,
  ${\rm ceil}(z)$ be the smallest integer that is not less than $z$. 
  Let $\widetilde {D u}_{h,\dt}$ be the piecewise constant function from $Q$ to $\R^2$ which takes the value
\begin{displaymath}
    \left(  \left(D_1^+ u^{n+1}\right)_{{\rm floor}(\frac i 2),  {\rm ceil}(\frac j 2)  },
      \left(D_2^+ u^{n+1}\right)_{{\rm ceil}(\frac i 2 ),  {\rm floor}(\frac j 2)  } \right)
    \end{displaymath}
in $  \left(t_n,t_{n+1}\right)\times \left(i\frac h 2 , (i+1)\frac h 2\right)\times \left( j\frac h 2 , (j+1)\frac h 2\right)$.
More explicitly, $\widetilde {D u}_{h,\dt}$  takes the value
  \begin{displaymath}
    \begin{array}[c]{lcl}
    \left(  \left(D_1^+ u^{n+1}\right)_{i,j},  \left(D_2^+ u^{n+1}\right)_{i,j}\right)  &\hbox{ in } &\left(t_n,t_{n+1}\right)\times \left(ih, ih+\frac h 2\right)\times \left(jh, jh+\frac h 2\right)
\\
\left(  \left(D_1^+ u^{n+1}\right)_{i,j},  \left(D_2^+ u^{n+1}\right)_{i,j-1}\right)  &\hbox{ in } &\left(t_n,t_{n+1}\right)\times \left(ih, ih+\frac h 2\right)\times \left(jh-\frac h 2, jh\right)
\\
\left(  \left(D_1^+ u^{n+1}\right)_{i-1,j},  \left(D_2^+ u^{n+1}\right)_{i,j-1}\right)  &\hbox{ in }& \left(t_n,t_{n+1}\right)\times \left(ih-\frac h 2, ih\right)\times \left(jh-\frac h 2, jh\right)
\\
\left(  \left(D_1^+ u^{n+1}\right)_{i-1,j},  \left(D_2^+ u^{n+1}\right)_{i,j}\right)  &\hbox{ in }& \left(t_n,t_{n+1}\right)\times \left(ih-\frac h 2, ih\right)\times \left(jh, jh+\frac h 2\right).      
    \end{array}
  \end{displaymath}
\begin{lemma}\label{sec:l1-comp-results}
Up to the extraction of a subsequence, the functions $\widetilde {D u}_{h,\dt}$ converge a.e. to $D\tilde u$ in $Q$, and  in $L^\alpha(Q)$ for any $\alpha\in [1,\frac43)$.
\end{lemma}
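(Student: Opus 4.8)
The plan is to combine a uniform integrability bound on $\widetilde {D u}_{h,\dt}$, an identification of its weak limit as $D\tilde u$, and a discrete truncation argument (in the spirit of \cite{MR766873}) that upgrades weak convergence to convergence almost everywhere.

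First I would record the a priori bound. By construction each of the four values defining $\widetilde {D u}_{h,\dt}$ on the sub-cells surrounding $x_{i,j}$ is one of the one-sided differences entering $[\nabla_h u^{n+1}]_{i,j}$; summing the contributions of the four sub-cells reproduces, up to a fixed multiplicative constant, the quantity $\dt\sum_n |D_h u^{n+1}|^\alpha_{L^\alpha(\T_h^2)}$. Hence, by the gradient estimate established in the proof of Lemma~\ref{l1-comp}, the family $\{\widetilde {D u}_{h,\dt}\}$ is bounded in $L^{\alpha'}(Q;\R^2)$ for every $\alpha'\in[1,\tfrac43)$, so up to a subsequence it converges weakly in $L^{\alpha}(Q;\R^2)$ to some limit $\chi$. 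To identify $\chi$, I would test against a smooth compactly supported vector field $\varphi$ and perform a discrete summation by parts (the half-cell reconstruction is a consistent approximation of the gradient, so this matches the continuous integration by parts up to an error of order $h$), then pass to the limit using the strong $L^\beta(Q)$ convergence $u_{h,\dt}\to\tilde u$ of Lemma~\ref{l1-comp}. This yields $\int_Q \chi\cdot\varphi = -\int_Q \tilde u\,\diver\varphi = \int_Q D\tilde u\cdot\varphi$, so that $\chi=D\tilde u$ and the weak limit is identified along the whole subsequence.

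The heart of the matter is to promote this to convergence almost everywhere. For $k>0$ set $T_k(s)=\max(-k,\min(k,s))$ and let $\Theta_k$ be its convex primitive. I would test \eqref{eql1} with $\dt\,T_k(u^{n+1})$ and sum over $n$: since $T_k$ is nondecreasing and $1$-Lipschitz one has cellwise $(D_h u^{n+1})\cdot(D_h T_k(u^{n+1}))\ge |D_h T_k(u^{n+1})|^2$, the discrete time term is bounded below by the telescoping sum of $\Theta_k(u^{n})$, and the right-hand side is bounded by $k\,\|f\|_{L^1(Q_{h,\dt})}$. This gives the uniform truncation estimate $\dt\sum_n |D_h T_k(u^{n+1})|^2_{L^2(\T_h^2)}\le Ck$ with $C$ independent of $h,\dt$, so the truncated reconstruction $\widetilde{D T_k(u)}_{h,\dt}$ is bounded in $L^2(Q;\R^2)$ for each fixed $k$. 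Since $u_{h,\dt}\to\tilde u$ a.e. (after a further extraction) and $T_k$ is continuous and bounded, $T_k(u_{h,\dt})\to T_k(\tilde u)$ strongly in every $L^q(Q)$, whence the weak $L^2$ limit of $\widetilde{D T_k(u)}_{h,\dt}$ is $DT_k(\tilde u)$, and $T_k(\tilde u)\in L^2(0,T;H^1(\T^2))$.

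The main obstacle is to show that for each fixed $k$ this convergence is in fact \emph{strong} in $L^2(Q)$: here I would adapt the Boccardo--Murat--Puel technique \cite{MR766873}, using test functions built from $T_k(u_{h,\dt})$ minus a smooth approximation of $T_k(\tilde u)$ and exploiting the monotonicity of $T_k$ together with the linear diffusion to annihilate the defect of strong convergence, the delicate point being the control of the pairing of the (in the limit, merely measure) datum with the truncation across its level sets. Granting $\widetilde{D T_k(u)}_{h,\dt}\to DT_k(\tilde u)$ strongly, a subsequence converges a.e.; since $DT_k(\tilde u)=D\tilde u\,\mathbf 1_{\{|\tilde u|<k\}}$ and $\widetilde{D T_k(u)}_{h,\dt}$ coincides with $\widetilde {D u}_{h,\dt}$ off the super-level set $\{|u_{h,\dt}|\ge k\}$, whose measure is at most $C/k^{\beta}$ uniformly in $h,\dt$ by the $L^\beta(Q)$ bound, a diagonal extraction letting $k\to\infty$ yields a single subsequence with $\widetilde {D u}_{h,\dt}\to D\tilde u$ a.e. in $Q$. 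Finally, the $L^{\alpha'}$ bound with $\alpha<\alpha'<\tfrac43$ makes $\{|\widetilde {D u}_{h,\dt}|^\alpha\}$ uniformly integrable, so a.e. convergence upgrades to strong convergence in $L^\alpha(Q)$ for every $\alpha\in[1,\tfrac43)$ by Vitali's theorem.
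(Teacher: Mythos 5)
Your outline is structurally reasonable (uniform $L^{\alpha'}$ bound, identification of the weak limit, truncation estimate, Vitali at the end), and the truncation estimate $\dt\sum_n |D_h T_k(u^{n+1})|^2_{L^2(\T_h^2)}\le Ck$ is correct and is indeed used in the paper in a closely related form. But there is a genuine gap at the step you yourself identify as "the heart of the matter": the strong $L^2$ convergence of $\widetilde{D T_k(u)}_{h,\dt}$ to $DT_k(\tilde u)$ is not proved, only "granted" after a one-sentence appeal to Boccardo--Murat--Puel. That step is precisely the content of the lemma, and the route you sketch does not obviously close: the BMP strong-convergence-of-truncations argument requires passing to the limit in the pairing of the datum with $T_k$ of the difference of solutions, and here the data $f_{h,\dt}$ converge only weak-$*$ to a measure $\tilde\mu$, so the pairing $\sum f^n_{i,j}\,T_k(\cdot)$ cannot be passed to the limit against a merely a.e.\ convergent, discontinuous test object. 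You flag this as "the delicate point" but do not resolve it, so the proof is incomplete exactly where it needs to be complete.

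The paper circumvents this by never trying to prove strong convergence of truncations of $u$ itself. Instead it regularizes the \emph{limit} problem: it takes smooth data $\tilde\mu_\de,\tilde\mu^0_\de$ converging weak-$*$ to $\tilde\mu,\tilde\mu^0$, lets $U_\de$ be the \emph{discrete} solution with these smooth data, and applies the truncation $T_\sigma$ to the difference $e=u-U_\de$. Because both $f$ and $\mu_\de$ are grid functions with uniform $L^1$ bounds, the problematic pairing becomes simply $\sigma\bigl(\|f-\mu_\de\|_{L^1(Q_{h,\dt})}+\|e^0\|_{L^1(\T^2_h)}\bigr)$, and a H\"older splitting over the level sets of $e$ followed by optimization in $\sigma$ yields the quantitative inequality $\|\,|D_h u-D_h U_\de|^s\|_{L^1(Q_{h,\dt})}\le c\,\|u-U_\de\|_{L^1(Q_{h,\dt})}^{\theta(1-s)}$. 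The a.e.\ convergence then follows from a triangle inequality, consistency of the scheme for the smooth $\tilde U_\de$ at fixed $\de$, and the convergence $\tilde U_\de\to\tilde u$ in $L^\alpha(0,T;W^{1,\alpha}(\T^2))$. If you want to salvage your plan, the cleanest fix is to replace your step 3 by this comparison with $U_\de$; as written, the argument has a hole that the remaining steps (diagonal extraction in $k$, uniform integrability, Vitali) cannot fill.
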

\begin{proof}
  Since $\tilde u$ is the unique weak solution of (\ref{eq:29}),
for every sequence of smooth functions $\tilde \mu_\de$ and smooth initial data $\tilde \mu^0_{\de}$ converging to $\tilde \mu$ and to $\tilde \mu^0$  respectively, in the weak-$*$ sense of measures,
(which, for instance, can be constructed by convolution), the smooth solutions $\tilde U_\de$ satisfying
\begin{displaymath}
\begin{cases}
\partial_t \tilde U_\de -\nu \Delta \tilde U_\de= \tilde \mu_\de & \hbox{in $Q$,} \\
\tilde U_\de(0,\cdot)= \tilde 
\mu^0_{\de} & \hbox{in $\T^2$}
\end{cases}  
\end{displaymath}
will converge to $\tilde u$, e.g. in $L^\alpha(0,T; W^{1,\alpha}(\T^2))$ for any $\alpha\in [1,\frac43)$, see e.g. \cite{MR1025884}. 
\\
We now consider the  finite difference approximation 
\begin{equation}\label{eql1de}
\ds   \frac {U^{n+1}_{\de,i,j}- U^{n}_{\de,i,j}} {\dt}  -\nu (\Delta_h U^{n+1}_\de)_{i,j} =   \mu_{\de,i,j}^n,
\end{equation}
with $(U^0_\de)_{i,j}= \tilde \mu^0_{\de}(x_{i,j})$ and $\mu_{\de,i,j}^n=  \tilde \mu_\de(t_n, x_{i,j})$.
\\
Let $\sigma$ be a positive real number: let $T_\sigma$ be the piecewise linear function defined on $\R$ by $T_\sigma(z)= \max(-\sigma,\min(z,\sigma))$. Let the grid function $e$ on $Q_{h,\dt}$ be given by $e_{i,j} ^n = u_{i,j} ^n-U_{\delta,i,j} ^n$.
Define also, for $0\leq i,j<N_h$, 
$$
\cI_j^n=\left\{i\; {\rm{s.t.}} \; 
  \max[ |e_{i+1,j}^n |\,,\, |e_{i,j}^n| ]\le \sigma \right\}
\,;\qquad 
\cJ_i^n=\left\{j\; {\rm{s.t.}} \; 
  \max[ |e_{i,j+1}^n |\,,\, |e_{i,j}^n| ]\le \sigma \right\}.
$$
For any $s: 0<s<1$ we have
$$
h^ 2 \dt \sum_{n=0}^{N_T-1} \sum_{i,j }  |D_h e^{n+1}_{i,j}|^s \leq  h^ 2 \dt \sum_{n=0}^{N_T-1} \sum_{i,j }  |D_1^+ e^{n+1}_{i,j}|^s+ |D_2^+ e^{n+1}_{i,j}|^s\,.
$$
The first term can be estimated as follows
\begin{displaymath}
  \begin{split}
  & h^ 2 \dt \sum_{n=0}^{N_T-1} \sum_{i,j }  |D_1^+ e^{n+1}_{i,j}|^s
  \leq h^ 2 \dt \sum_{n=0}^{N_T-1}\sum_j\sum_{i\in \cI_j^{n+1} }  (D_1^+ e^{n+1}_{i,j}D_1^+ T_\sigma(e^{n+1})_{i,j})^{\frac s2}
  \\
   &  \qquad + h^ 2 \dt \sum_{n=0}^{N_T-1}\sum_j\sum_{i\not\in \cI_j^{n+1} }  |D_1^+ e^{n+1}_{i,j}|^s 
   \\  \leq  & T^{1-\frac s2} \left(h^ 2 \dt \sum_{n=0}^{N_T-1}\sum_{i,j} D_1^+ e^{n+1}_{i,j}D_1^+ T_\sigma(e^{n+1})_{i,j}\right)^{\frac s2}
   \\
   & \qquad + 
\left(h^ 2 \dt \sum_{n=0}^{N_T-1}\sum_{i,j} |D_1^+ e^{n+1}_{i,j}|\right)^{s}   \left( h^ 2 \dt \sum_{n=0}^{N_T-1}\sum_j\sum_{i\not\in \cI_j^{n+1} } 1 \right)^{1-s}
\\
\leq & T^{1-\frac s2} \left(h^ 2 \dt \sum_{n=0}^{N_T-1}\sum_{i,j} D_1^+ e^{n+1}_{i,j}D_1^+ T_\sigma(e^{n+1})_{i,j}\right)^{\frac s2}
\\
& \qquad + 
\left(h^ 2 \dt \sum_{n=0}^{N_T-1}\sum_{i,j} |D_1^+ e^{n+1}_{i,j}|\right)^{s}   \left( h^ 2 \dt \sum_{n=0}^{N_T-1}\sum_{i,j} \frac{|e_{i+1,j}^{n+1} |+|e_{i,j}^{n+1}| }\sigma  \right)^{1-s}.
\end{split}
\end{displaymath}
Similarly we estimate the term with $D_2^+$ using the set $\cJ^n_i$, and overall we deduce that
\begin{equation}\label{presigma}
  \begin{split}
 &  h^ 2 \dt \sum_{n=0}^{N_T-1} \sum_{i,j }  |D_h e^{n+1}_{i,j}|^s
 \leq c\, T^{1-\frac s2} \left(h^ 2 \dt \sum_{n=0}^{N_T-1}\sum_{i,j} D_h e^{n+1}_{i,j}\cdot D_h T_\sigma(e^{n+1})_{i,j}\right)^{\frac s2} 
 \\
& \qquad + c \| D_h e\|_{L^1 (Q_{h,\dt})}^{s} \| e\|_{L^1 (Q_{h,\dt})}^{1-s} \sigma^{-(1-s)} 
\end{split}
\end{equation}
for some constant $c$ only depending on $s$. We estimate the first term from the discrete equation
\begin{displaymath}
\begin{split}
 & \nu\, h^ 2 \dt \sum_{n=0}^{N_T-1}\sum_{i,j} D_h e^{n+1}_{i,j}\cdot D_h T_\sigma(e^{n+1})_{i,j} = h^ 2 \dt \sum_{n=0}^{N_T-1}\sum_{i,j} (f^n_{i,j}-\mu_{\de,i,j}^n) T_\sigma(e^{n+1}_{i,j}) 
 \\
 & \qquad - h^ 2 \dt \sum_{n=0}^{N_T-1}\sum_{i,j} \frac{e^{n+1}_{i,j}- e^{n}_{i,j}}{\dt} T_\sigma(e^{n+1}_{i,j})
 \end{split}
\end{displaymath}
which implies, using that $(x-y)T_\sigma(x)\geq \Theta_\sigma(x)-\Theta_\sigma(y)$ for the nonnegative and convex function $\Theta(s)= \int_0^s T_\sigma(r)dr$, 
$$
 \nu\, h^ 2 \dt \sum_{n=0}^{N_T-1}\sum_{i,j} D_h e^{n+1}_{i,j}\cdot D_h T_\sigma(e^{n+1})_{i,j} \leq \sigma \left( \|f-\mu_{\de}\|_{L^1(Q_{h,\dt})} + \|e^0\|_{L^1(\T^2_h)}\right)\,.
 $$
Therefore, we deduce from (\ref{presigma})
\begin{displaymath}
   \begin{split}
 &  h^ 2 \dt \sum_{n=0}^{N_T-1} \sum_{i,j }  |D_h e^{n+1}_{i,j}|^s
 \leq c\, T^{1-\frac s2} \sigma^{\frac s2} \left( \|f-\mu_{\de}\|_{L^1(Q_{h,\dt})} + \|e^0\|_{L^1(\T ^2_ h)}\right)^{\frac s2} 
 \\
& \qquad + c \| D_h e\|_{L^1 (Q_{h,\dt})}^{s} \| e\|_{L^1 (Q_{h,\dt})}^{1-s} \sigma^{-(1-s)} \,.
\end{split}
\end{displaymath}
Taking the minimum  of the right hand side w.r.t. $\sigma$, and using the $L^1$ bounds for $\mu_\de$, $\mu_{0\de}$ and the data in (\ref{l1bound}),  we see that 
$$
\| |D_h e|^s\|_{L^1 (Q_{h,\dt})} \leq c  \| D_h e\|_{L^1 (Q_{h,\dt})}^{\theta s} \| e\|_{L^1 (Q_{h,\dt})}^{\theta(1-s)}
$$
for some $c$ and $\theta$ depending on $s$ but not on $h$ or $\de$. Recalling the definition of $e$, and the estimate on the discrete gradient, we have proved that
$$
\| |D_h u- D_h U_{\de} |^s\|_{L^1 (Q_{h,\dt})} \leq c  \| u- U_{\de}\|_{L^1 (Q_{h,\dt})}^{\theta(1-s)}\,.
$$
Hence,
\begin{equation}
  \label{eq:38}
\|\, |D_h u- D_h U_{\de} |^s\|_{L^1 (Q_{h,\dt})} \leq c  \| u_{h,\dt}- U_{\de,h,\dt}\|_{L^1 (Q_{h,\dt})}^{\theta(1-s)},
\end{equation}
 where $u_{h,\dt}$ has been defined in Lemma~\ref{l1-comp} and $ U_{\de,h,\dt}$ is the piecewise constant function that takes the value
$U_{\delta,i,j}^{n+1} $ in $(t_n,t_{n+1})\times (ih - h/2, ih+h/2) \times (jh -h/2, jh+h/2)$. 
\\
Let us also define $\widetilde {D U}_{\delta,h,\dt}$ from the grid function $U_\delta$ in a similar way as $\widetilde {D u}_{h,\dt}$: it takes the values
\begin{displaymath}
    \left(  \left(D_1^+ U_\delta^{n+1}\right)_{{\rm floor}(\frac i 2),  {\rm ceil}(\frac j 2)  },
      \left(D_2^+  U_\delta^{n+1}\right)_{{\rm ceil}(\frac i 2 ),  {\rm floor}(\frac j 2)  } \right)
    \end{displaymath}
in $  \left(t_n,t_{n+1}\right)\times \left(i\frac h 2 , (i+1)\frac h 2\right)\times \left( j\frac h 2 , (j+1)\frac h 2\right)$.
Therefore, we see that
\begin{displaymath}
  \begin{split}
& \| \,|\widetilde {Du}_{h,\dt}- D\tilde u  |^s\|_{L^1 (Q)}   \\
 \leq &   \| \,|\widetilde {Du}_{h,\dt}    -  
\widetilde {D U}_{\delta,h,\dt}
|^s\|_{L^1(Q)}
+ \| \,|
\widetilde {D U}_{\delta,h,\dt}
- D\tilde U_\de|^s\|_{L^1(Q)}+ \|\, |D\tilde U_\de-D\tilde u  |^s\|_{L^1 (Q)}
\\
 \leq &  c \| u_{h,\dt}- U_{\de,h,\dt}\|_{L^1 (Q_{h,\dt})}^{\theta(1-s)}
 + \|\,|\widetilde {D U}_{\de,h,\dt}- D\tilde U_\de|^s\|_{L^1(Q)}+ \|\,|D\tilde U_\de-D\tilde u  |^s\|_{L^1 (Q)}\\
\leq & c  \left(\| u_{h,\dt}-\tilde u\|_{L^1 (Q_{h,\dt})}^{\theta(1-s)}   + \|\tilde u - \tilde U_\delta\| _{L^1 (Q_{h,\dt})}^{\theta(1-s)}+   \| \tilde U_\delta    - U_{\de,h,\dt}\|_{L^1 (Q_{h,\dt})}^{\theta(1-s)}\right)
\\  & + \|\,|\widetilde {D U}_{\de,h,\dt}- D\tilde U_\de|^s\|_{L^1(Q)}+ \|\,|D\tilde U_\de-D\tilde u  |^s\|_{L^1 (Q)}
  \end{split}
\end{displaymath}
where we have used (\ref{eq:38}) to obtain the third line.
At fixed $\de$, since $\tilde U_\de$ is a smooth solution of the heat equation, the discrete approximation  $U_{\de, h, \dt} $ converges
to $\tilde U_\de$ in $L^2(Q) $   and $\widetilde {D U}_{\de,h,\dt}$ converges to $D \tilde U_\delta$ in $L^2(Q;\R^2) $.
Using also Lemma \ref{l1-comp}, we get that   
$$
\limsup\limits_{h, \dt \to 0} \| |Du_{h,\dt}- D\tilde u  |^s\|_{L^1 (Q)}   \leq c \| \tilde u- \tilde U_\de\|_{L^1 (Q)}^{\theta(1-s)} + \||D\tilde U_\de-D\tilde u  |^s\|_{L^1 (Q)}\,.
$$
We conclude using the strong convergence of $\tilde U_\de$ to $\tilde u$ in $L^\alpha(0,T; W^{1,\alpha}(\T^2))$ for any $\alpha\in [1,\frac43)$ (see e.g. \cite{MR1025884}, as $\de\to 0$. So
$$
\| \,|\widetilde{Du}_{h,\dt}- D\tilde u  |^s\|_{L^1 (Q)}  \mathop{\to}\limits^{\dt,h\to 0} 0
$$
which in particular implies that $\widetilde{Du}_{h,\dt}$ converges to $ D\tilde u$ a.e. in $Q$ and  then, by Vitali's theorem, in $L^\alpha(Q)$ for any $\alpha\in [1,\frac43)$.
\end{proof}

\begin{remark}
  \label{sec:l1-comp-results-2}
As a consequence of Lemma~\ref{sec:l1-comp-results}, for any    $\xi \in \R^2$,  there exists a subsequence of $h$ and $\dt$ (not relabeled) such that the maps $(t,x)\mapsto \widetilde{Du}_{h,\dt} (t, x+h\xi)$
also converge to $ D\tilde u$ a.e. and in $L^\alpha(Q)$ for any $\alpha\in [1,\frac43)$.
\end{remark}

\begin{remark}
  \label{sec:l1-comp-results-1}
Alternative strategies can be used to construct  a function defined on $Q$ from  the grid function $u$. For example, we can  
define $w_{h,\dt}$ as the continuous and piecewise trilinear function on $\bar Q$ which takes the values $u_{i,j}^{\max (1,n)} $
 at $(t_n,x_{i,j})$   and which is trilinear in the rectangles  of the time-space grid $Q_{h,\dt}$.
The advantage of taking $w_{h,\dt}$ instead of $u_{h,\dt}$ is that the former has weakly integrable partial derivatives with respect to the spatial 
variable. Therefore, we can use directly $   Dw_{h,\dt}$ instead of having to define an independent approximation of $D\tilde u$ such as  $\widetilde {Du}_{h,\dt}$.
 It is then possible to prove the following
lemma, which may replace both Lemmas \ref{l1-comp} and \ref{sec:l1-comp-results}:
\begin{lemma}
  There exists a subsequence of $h$ and $\dt$ (not relabeled) and a function $\tilde u$ such that $w_{h,\dt} \to \tilde u$ in 
$ L^\alpha(0,T;W^{1,\alpha}(\T^2))$ for any $\alpha\in [1,\frac43)$ and in $L^\beta(Q)$ for all $\beta\in[1,2)$.
In particular, $w_{h,\dt} \to \tilde u$ and $Dw_{h,\dt} \to D\tilde u$ in $L^1(Q)$ and  almost everywhere in $Q$. 
\end{lemma}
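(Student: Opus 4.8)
The plan is to treat $w_{h,\dt}$ as a genuine element of $L^\alpha(0,T;W^{1,\alpha}(\T^2))$—it is continuous and piecewise trilinear, hence spatially weakly differentiable on each cell—and to reuse the two compactness results already proved: Lemma~\ref{l1-comp} for the identification of the function limit and Lemma~\ref{sec:l1-comp-results} for the gradient. First I would record the uniform bounds. The value of $w_{h,\dt}$ at any point is a convex combination of the nodal values $u^{\max(1,n)}$ and $u^{\max(1,n+1)}$ at the corners of the surrounding space--time cell, so $\|w_{h,\dt}\|_{L^\beta(Q)}\le C\|u\|_{L^\beta(Q_{h,\dt})}$ for $\beta\in[1,2)$; likewise each component of $Dw_{h,\dt}$ is, on each cell, a convex combination of one-sided differences $D_1^+u,D_2^+u$ at neighbouring nodes, whence $\|Dw_{h,\dt}\|_{L^\alpha(Q)}\le C(\dt\sum_n|D_hu^n|^\alpha_{L^\alpha(\T_h^2)})^{1/\alpha}$ for $\alpha\in[1,4/3)$. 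Both right-hand sides are controlled by the discrete estimates recalled in the proof of Lemma~\ref{l1-comp}, so $w_{h,\dt}$ is bounded in $L^\alpha(0,T;W^{1,\alpha}(\T^2))\cap L^\beta(Q)$.

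Next I would obtain strong compactness of $w_{h,\dt}$ itself. On each slab $\partial_t w_{h,\dt}$ is constant in time and equals the spatial trilinear interpolant of $(u^{n+1}-u^{\max(1,n)})/\dt$; since the negative norm does not increase under interpolation (up to a uniform constant), the discrete bound on $\dt\sum_n\|(u^{n+1}-u^n)/\dt\|_{W^{-1,\alpha}(\T_h^2)}$ also recalled in Lemma~\ref{l1-comp} gives a uniform bound for $\partial_t w_{h,\dt}$ in $L^1(0,T;W^{-1,\alpha}(\T^2))$. With the compact embedding $W^{1,\alpha}(\T^2)\hookrightarrow\hookrightarrow L^\alpha(\T^2)$, the Aubin--Lions--Simon lemma then yields relative compactness of $w_{h,\dt}$ in $L^\alpha(Q)$; extracting a subsequence, $w_{h,\dt}\to v$ strongly in $L^\alpha(Q)$ and weakly in $L^\alpha(0,T;W^{1,\alpha}(\T^2))$, and the $L^\beta$ bound with Vitali's theorem upgrades the convergence to $L^\beta(Q)$ for every $\beta\in[1,2)$. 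To identify $v$ I would compare with $u_{h,\dt}$: the spatial interpolation defect of $w_{h,\dt}-u_{h,\dt}$ is $O(h)$ in $L^1(Q)$, whereas the time-interpolation defect, tested against any smooth $\varphi$, reduces up to $O(h)$ to a sum of the form $-\tfrac{\dt}{2}\sum_n\dt\,\langle(u^{n+1}-u^n)/\dt,\varphi(t_n)\rangle$, which is $O(\dt)$ again by the $W^{-1,\alpha}$ bound. Hence $w_{h,\dt}$ and $u_{h,\dt}$ have the same weak limit, and by Lemma~\ref{l1-comp} it is $\tilde u$, so $v=\tilde u$.

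Finally I would prove the strong convergence of the gradient. On each cell the component $\partial_{x_1}w_{h,\dt}$ is a convex combination, with coefficients in $[0,1]$ given by the local time and $x_2$ weights, of the four differences $D_1^+u^{n}_{i,j},D_1^+u^{n}_{i,j+1},D_1^+u^{n+1}_{i,j},D_1^+u^{n+1}_{i,j+1}$, and each of these is—up to a space shift of size $h$ and a time shift of size $\dt$—a value of the reconstructed gradient $\widetilde{Du}_{h,\dt}$ of Lemma~\ref{sec:l1-comp-results}; similarly for $\partial_{x_2}w_{h,\dt}$. Writing $Dw_{h,\dt}-D\tilde u=\sum_k\lambda_k(a_k-D\tilde u)$ with $0\le\lambda_k\le1$, $\sum_k\lambda_k=1$ and $a_k$ the corresponding space--time translates of $\widetilde{Du}_{h,\dt}$, the pointwise bound $|Dw_{h,\dt}-D\tilde u|\le\sum_k|a_k-D\tilde u|$, the strong convergence $\widetilde{Du}_{h,\dt}\to D\tilde u$ in $L^\alpha(Q)$ from Lemma~\ref{sec:l1-comp-results} (see also Remark~\ref{sec:l1-comp-results-2}) and the continuity of translations in $L^\alpha(Q)$ give $Dw_{h,\dt}\to D\tilde u$ in $L^\alpha(Q)$ for all $\alpha\in[1,4/3)$, hence almost everywhere and in $L^1(Q)$ along a subsequence. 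Together the three steps give $w_{h,\dt}\to\tilde u$ in $L^\alpha(0,T;W^{1,\alpha}(\T^2))$ and in $L^\beta(Q)$.

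I expect the main obstacle to be the identification of the limit in the second step: because the discrete time increments are controlled only in the negative norm $W^{-1,\alpha}$—mirroring the fact that $\tilde u$ solves a heat equation with a mere measure right-hand side—the difference $w_{h,\dt}-u_{h,\dt}$ is not small in any strong topology and cannot be discarded by a crude estimate. One is thus forced to produce compactness for $w_{h,\dt}$ on its own and only then match the limit weakly against smooth test functions, where the $O(\dt)$ gain coming from the negative norm finally pays off.
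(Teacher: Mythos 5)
Your proof is correct, but it takes a genuinely different route from the one the paper intends. The paper's one‑line proof means the lemma to be \emph{self-contained}, so that it can replace Lemmas~\ref{l1-comp} and~\ref{sec:l1-comp-results}: one establishes the uniform bounds on $w_{h,\dt}$ in $L^\alpha(0,T;W^{1,\alpha}(\T^2))$ and on $\partial_t w_{h,\dt}$ in $L^1(0,T;W^{-1,\alpha}(\T^2))$, applies the continuous Aubin--Simon lemma to get strong $L^\alpha(Q)$ compactness of $w_{h,\dt}$ itself, and then reruns the truncation/comparison argument of Lemma~\ref{sec:l1-comp-results} (comparison with the smooth solutions $\tilde U_\de$ of the limit heat equation, the estimate with $T_\sigma$) directly on $Dw_{h,\dt}$ --- which is legitimate precisely because $w_{h,\dt}$ has genuine weak spatial derivatives, so no ad hoc reconstruction of the gradient is needed. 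You follow that plan for the function part (uniform bounds by convexity of the interpolation weights, Aubin--Lions--Simon, Vitali, identification of the limit against $u_{h,\dt}$ through the $O(\dt)$ gain from the $W^{-1,\alpha}$ bound on the time increments, which is indeed the right way to handle the fact that $u^{n+1}-u^n$ is not small in any strong norm). For the gradient, however, you take a shortcut: you write $Dw_{h,\dt}$ cell by cell as a convex combination of one-sided differences, recognize these as values of space--time translates (by $O(h)$ and $O(\dt)$) of the reconstruction $\widetilde {D u}_{h,\dt}$, and conclude from the strong convergence already proved in Lemma~\ref{sec:l1-comp-results}, Remark~\ref{sec:l1-comp-results-2} and the continuity of translations in $L^\alpha$. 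This is shorter and perfectly valid as a proof of the stated convergence, but it makes the lemma a \emph{consequence} of Lemmas~\ref{l1-comp} and~\ref{sec:l1-comp-results} rather than an alternative to them, which is the purpose announced in Remark~\ref{sec:l1-comp-results-1}. Two details you assert without proof --- that the discrete $W^{-1,\alpha}(\T_h^2)$ bound transfers to the spatial interpolant of the time increments, and that the pieces $a_k$ are only piecewise assemblies of translates of $\widetilde {D u}_{h,\dt}$ rather than single translates --- are standard to fill in and do not affect the argument.
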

\begin{proof}
  The strategy of proof is  similar except that we may directly use the continuous version of 
the compactness lemma of  Aubin-Simon, see \cite{MR916688}, for the function $w_{h,\dt}$.
\end{proof}
\end{remark}

\section{From the discrete to the continuous system}
\label{sec:dis-to-con}

\subsection{A priori estimates and compactness}
\label{sec:priori-estim-comp}
Let $u_{h,\dt}$ and  $\widetilde {Du}_{h,\dt}$ be the piecewise constant functions defined in  Lemmas \ref{l1-comp} and \ref{sec:l1-comp-results} respectively: up to the extraction of a subsequence, we can assume that  $u_{h,\dt} \to \tilde u$  in  $L^\beta(Q)$ for all $\beta\in [1,2)$
 and that $\widetilde {Du}_{h,\dt}$  a.e. to $D\tilde u$ in $Q$ and  in $L^\alpha(Q)$ for any $\alpha\in [1,\frac43)$.
 \\
Let $m_{h,\dt}$ be the piecewise constant function 
 which takes the value $m_{i,j}^{n} $ in $(t_n,t_{n+1})\times (ih - h/2, ih+h/2) \times (jh -h/2, jh+h/2)$, 
and $\widetilde {D m}_{h,\dt}$  be
the piecewise constant function from $Q$ to $\R^2$ which takes the value
\begin{displaymath}
    \left(  \left(D_1^+ m^{n}\right)_{{\rm floor}(\frac i 2),  {\rm ceil}(\frac j 2)  },
      \left(D_2^+ m^{n}\right)_{{\rm ceil}(\frac i 2 ),  {\rm floor}(\frac j 2)  } \right)
    \end{displaymath}
in $  \left(t_n,t_{n+1}\right)\times \left(i\frac h 2 , (i+1)\frac h 2\right)\times \left( j\frac h 2 , (j+1)\frac h 2\right)$.
From Theorem \ref{sec:priori-estim-from-1}, we may also assume that  $m_{h,\dt} \to  \tilde m$ in $L^1(Q)$  and almost everywhere in $Q$,
and that $\widetilde {D m} _{h,\dt}\to D\tilde m$ weakly in $L^\alpha(Q)$ for any $\alpha\in [1,\frac 43)$.
Moreover, for all $\eta>0$, there exists a constant $c_\eta$ 
such that for all $z\ge 0$, $ F(z)\le \frac {z F(z)}\eta +c_\eta$. This fact and estimate (\ref{eq:42}) yield the equi-integrability of $F(m_{h,\dt})$. By Vitali's theorem,
$F(m_{h,\dt})\to F(\tilde m)$   in $L^1(Q)$.
\\
From the observations above, the piecewise constant function which takes the value 
\begin{displaymath}
   \frac {u^{n+1}_{i,j}- u^{n}_{i,j}} {\dt}  -\nu (\Delta_h u^{n+1})_{i,j} -   F(m^{n}_{i,j}) 
\end{displaymath}
in $(t_n,t_{n+1})\times (ih - h/2, ih+h/2) \times (jh -h/2, jh+h/2)$ converges to $
\frac{\partial \tilde u}{\partial t}-\nu \Delta \tilde u -F(\tilde m)$ in the sense of distributions.

\subsection{Stability of the discrete Bellman equation}
We now pass to the limit in the discrete Bellman equation.

The main difficulty is to handle the nonlinear term $g(x_{i,j}, \left[\nabla_h u^{n+1}\right]_{i,j} )$; 
here  we wish to use the a.e. convergence of the gradients obtained in \S~\ref{sec:l1-comp-results-3}.
 We adapt the method used for continuous problems in \cite{MR766873}.
Note that $\left[\nabla_h u^{n+1}\right]_{i,j}$ is the value taken by the piecewise constant function with values in $\R^4$
\begin{displaymath}
\left(
  \begin{array}[c]{ll}
     \vec{e_1} \cdot \widetilde {D u}_{h,\dt} (\cdot+\frac h 2 \vec{e_1}  )  , 
 \vec{e_1} \cdot \widetilde {D u}_{h,\dt} (\cdot-\frac h 2 \vec{e_1} ), 
 \vec{e_2} \cdot \widetilde {D u}_{h,\dt} (\cdot+\frac h 2 \vec{e_2})  , 
 \vec{e_2} \cdot \widetilde {D u}_{h,\dt} (\cdot- \frac h 2 \vec{e_2} )
  \end{array}\right)
\end{displaymath}
at $(t,x)$ such that  $|x_1-ih|< h/2$, $|x_2-jh|< h/2$, $t_n\le t< t_{n+1}$.
From  the continuity of $g$, 
the consistency assumption and Remark \ref{sec:l1-comp-results-2},
$$
g_{h,\dt} \to g(x, D_1 \tilde u, D_1\tilde u, D_2 \tilde u, D_2 \tilde u)= H(x,D\tilde u) \qquad \mbox{a.e. in $Q$,}
$$
where $g_{h,\dt}$ is the piecewise constant function which take the value 
$g(x_{i,j}, \left[\nabla_h u^{n+1}\right]_{i,j} )$  for $(t,x)$ such that  $|x_1-ih|< h/2$, $|x_2-jh|< h/2$, $t_n\le t< t_{n+1}$.

Let now $\vfi$ be a smooth function on $\T^2$ such that $\vfi\geq 0$, with $\vfi(T)=0$. We multiply the discrete Bellman equation by $\vfi(t_{n+1}, x_{i,j})$ and sum for all $i,j$ and $n=0,\dots, N_T-1$. Since, by convexity, 
$$
g(x,q)\geq g(x,0) + g_q(x,0)\cdot q
$$
the regularity of $g$ w.r.t. $x$ and the $L^1$-compactness of $\widetilde {Du}_{h,\dt}$ allow us to apply Fatou's lemma obtaining 
$$
\liminf\limits_{h\to 0} \, h^ 2 \dt \sum_{n=0}^{N_T-1} \sum_{i,j }g(x_{i,j}, \left[\nabla_h u^{n+1}\right]_{i,j} )\vfi(t_{n+1}, x_{i,j}) \geq  \int_Q H(x,D\tilde u) \vfi\, dxdt\,.
$$
Passing to the limit in the other terms of the equation, we deduce that
$$
-\int_Q \tilde u\,\vfi_t\, dxdt +\nu  \int_Q D\tilde uD\vfi\, dxdt+ \int_Q H(x,D\tilde u) \vfi\, dxdt \leq  \int_{\T^2} u_0\, \vfi(0)\, dx+ \int_Q F(\tilde m)\vfi\, dxdt\,.
$$
We now wish to obtain the reverse inequality, which is the difficult part. We start by noticing that, since the monotonicity assumption implies
$$
g(x,q_1,q_2,q_3,q_4) \leq g(x, -q_1^-, q_2^+, -q_3^-, q_4^+)
$$
from (\ref{eq:27}) and (\ref{eq:31}) and the fact that $g(x,0)$ is bounded, we know there exists $\la>0$ such that
\begin{equation}\label{ggrow}
g(x,q_1,q_2,q_3,q_4) \leq \nu\, \la \left[ 1+ (q_1^-)^2+ (q_2^+)^2 + (q_3^-)^2+ (q_4^+)^2  \right]\,.
\end{equation}
We multiply the discrete Bellman equation by $e^{-\la  u_{i,j}^{n+1}}\vfi(t_{n+1}, x_{i,j})$ and sum for all $i,j$ and $n=0,\dots, N_T-1$. We obtain
\begin{equation}\label{whole}
\begin{split}
& h^ 2 \dt \sum_{n=0}^{N_T-1}\sum_{i,j} \frac{u_{i,j}^{n+1}- u^n_{i,j}}{\dt} \, e^{-\la  u_{i,j}^{n+1}}\vfi(t_{n+1}, x_{i,j})
\\
 & \qquad + \nu\, h^ 2 \dt \sum_{n=0}^{N_T-1}\sum_{i,j} D_h u^{n+1}_{i,j}\cdot D_h (e^{-\la  u_{i,j}^{n+1}}\vfi(t_{n+1}, x_{i,j}))_{i,j} 
 \\
 & \qquad + h^ 2 \dt \sum_{n=0}^{N_T-1}\sum_{i,j}g(x_{i,j}, \left[\nabla_h u^{n+1}\right]_{i,j} ) e^{-\la  u_{i,j}^{n+1}}\vfi(t_{n+1}, x_{i,j})
 \\
 & = h^ 2 \dt \sum_{n=0}^{N_T-1}\sum_{i,j} F(m_{i,j}^n)e^{-\la  u_{i,j}^{n+1}}\vfi(t_{n+1}, x_{i,j}).
  \end{split}
\end{equation}
Since $u$ is uniformly bounded below, the last term converges by dominated convergence, so 
\begin{equation}\label{term4}
\lim\limits_{h\to 0}h^ 2 \dt \sum_{n=0}^{N_T-1}\sum_{i,j} F(m_{i,j}^n)e^{-\la  u_{i,j}^{n+1}}\vfi(t_{n+1}, x_{i,j})  = \int_Q F(\tilde m)e^{-\la \tilde u}\, \vfi\, dxdt
.\end{equation}
By convexity of $s\mapsto e^{-\la s}$ and since $\phi(T,\cdot)=0$, we have
\begin{displaymath}
\begin{split}
& h^ 2 \dt \sum_{n=0}^{N_T-1}\sum_{i,j} \frac{u_{i,j}^{n+1}- u^n_{i,j}}{\dt} \, e^{-\la  u_{i,j}^{n+1}}\vfi(t_{n+1}, x_{i,j}) \\
&\qquad  \leq \frac1\la \, h^ 2 \dt \sum_{n=0}^{N_T-1}\sum_{i,j} \frac{e^{-\la u_{i,j}^{n}}-e^{-\la u_{i,j}^{n+1}}}{\dt} \,  \vfi(t_{n+1}, x_{i,j}) 
\\
 &\qquad = \frac1\la \, h^ 2 \dt \sum_{n=0}^{N_T-1}\sum_{i,j} e^{-\la u_{i,j}^n}\, \frac{\vfi(t_{n+1},x_{i,j})-\vfi(t_n,x_{i,j})}{\dt} + \frac1\la \, h^ 2 \sum_{i,j} e^{-\la u_{i,j}^0}\vfi(0,x_{i,j})\,,
\end{split}
\end{displaymath}
and so, again by dominated convergence,
\begin{equation}\label{term1} 
\begin{split} & 
\limsup\limits_{h\to 0} \quad h^ 2 \dt \sum_{n=0}^{N_T-1}\sum_{i,j} \frac{u_{i,j}^{n+1}- u^n_{i,j}}{\dt} \, e^{-\la  u_{i,j}^{n+1}}\vfi(t_{n+1}, x_{i,j}) 
\\
& \qquad \leq \frac1\la \int_Q e^{-\la u} \vfi_t\, dxdt + \frac1\la \int_{\T^2} e^{-\la u_0}\, \vfi(0)\, dx \,.
\end{split}
\end{equation}
Let us deal now jointly with the second and third term in (\ref{whole}). First we split the energy term according to the sign of $D_1^+(u)_{i,j}$ (and $D_2^+(u)_{i,j}$, respectively); indeed, we can write
\begin{align*}
& D_1^+(u^{n+1})_{i,j} D_1^+(e^{-\la u^{n+1}}\vfi^{n+1})_{i,j} 
\\
& = 
(D_1^+(u^{n+1})_{i,j})^+ \left( e^{-\la u_{i+1,j}^{n+1}}- e^{-\la u_{i,j}^{n+1}}\right) \vfi_{i+1,j}^{n+1} - (D_1^+(u^{n+1})_{i,j})^-  \left( e^{-\la u_{i+1,j}^{n+1}}- e^{-\la u_{i,j}^{n+1}}\right)\vfi_{i,j}^{n+1}
\\
& 
+ (D_1^+(u^{n+1})_{i,j})^+ (\vfi_{i+1,j}^{n+1}-\vfi_{i,j}^{n+1}) e^{-\la u_{i,j}^{n+1}} - (D_1^+(u^{n+1})_{i,j})^- (\vfi_{i+1,j}^{n+1}-\vfi_{i,j}^{n+1}) e^{-\la u_{i+1,j}^{n+1}} 
\end{align*}
and the same for the term with $D_2^+$. Reordering the indexes in the sum, this means that 
the $D_1$ part in the second order term can be read as
\begin{align*}
& \nu\, h^ 2 \dt \sum_{n=0}^{N_T-1}\sum_{i,j} (D_1^+(u^{n+1})_{i-1,j})^+ \frac1h\left( e^{-\la u_{i,j}^{n+1}}- e^{-\la u_{i-1,j}^{n+1}}\right) \vfi_{i,j}^{n+1} 
\\
&
- \nu\, h^ 2 \dt \sum_{n=0}^{N_T-1}\sum_{i,j}  (D_1^+(u^{n+1})_{i,j})^-  \frac1h\left( e^{-\la u_{i+1,j}^{n+1}}- e^{-\la u_{i,j}^{n+1}}\right)\vfi_{i,j}^{n+1}
\\
&\quad +  \nu\, h^ 2 \dt \sum_{n=0}^{N_T-1}\sum_{i,j}  (D_1^+(u^{n+1})_{i,j})^+ D^+_1(\vfi^{n+1})_{i,j} e^{-\la u_{i,j}^{n+1}}
\\
& \qquad - \nu\, h^ 2 \dt \sum_{n=0}^{N_T-1}\sum_{i,j}(D_1^+(u^{n+1})_{i,j})^- D^+_1(\vfi^{n+1})_{i,j} e^{-\la u_{i+1,j}^{n+1}} 
\,,
\end{align*}
which is equal to 
\begin{align*}
& \nu\, h^ 2 \dt \sum_{n=0}^{N_T-1}\sum_{i,j} |(D_1^+(u^{n+1})_{i-1,j})^+|^2 \frac{ e^{-\la u_{i,j}^{n+1}}- e^{-\la u_{i-1,j}^{n+1}}}{ u_{i,j}^{n+1}-  u_{i-1,j}^{n+1}} \,  \vfi_{i,j}^{n+1} 
\\
&
+ \nu\, h^ 2 \dt \sum_{n=0}^{N_T-1}\sum_{i,j} |D_1^+(u^{n+1})_{i,j})^-|^2  \frac{e^{-\la u_{i+1,j}^{n+1}}- e^{-\la u_{i,j}^{n+1}}}{u_{i+1,j}^{n+1}- u_{i,j}^{n+1}}
\vfi_{i,j}^{n+1}
\\
&\quad +  \nu\, h^ 2 \dt \sum_{n=0}^{N_T-1}\sum_{i,j}  \left\{(D_1^+(u^{n+1})_{i,j})^+ e^{-\la u_{i,j}^{n+1}}-(D_1^+(u^{n+1})_{i,j})^- e^{-\la u_{i+1,j}^{n+1}} \right\}D^+_1(\vfi^{n+1})_{i,j} \,.
\end{align*}
We proceed similarly for the part with $D_2$. Therefore, 
\begin{align*}
& \nu\, h^ 2 \dt \sum_{n=0}^{N_T-1}\sum_{i,j} D_h u^{n+1}_{i,j}\cdot D_h (e^{-\la  u_{i,j}^{n+1}}\vfi(t_{n+1}, x_{i,j}))_{i,j} 
 \\
 & \qquad + h^ 2 \dt \sum_{n=0}^{N_T-1}\sum_{i,j}g(x_{i,j}, \left[\nabla_h u^{n+1}\right]_{i,j} ) e^{-\la  u_{i,j}^{n+1}}\vfi(t_{n+1}, x_{i,j})
 \\
 & = \nu\, h^ 2 \dt \sum_{n=0}^{N_T-1}\sum_{i,j}  \left\{(D_1^+(u^{n+1})_{i,j})^+ e^{-\la u_{i,j}^{n+1}} - (D_1^+(u^{n+1})_{i,j})^- e^{-\la u_{i+1,j}^{n+1}} \right\}D^+_1(\vfi^{n+1})_{i,j}  
 \\
 & +\nu\, h^ 2 \dt \sum_{n=0}^{N_T-1}\sum_{i,j}  \left\{(D_2^+(u^{n+1})_{i,j})^+ e^{-\la u_{i,j}^{n+1}} - (D_2^+(u^{n+1})_{i,j})^- e^{-\la u_{i,j+1}^{n+1}} \right\}D^+_2(\vfi^{n+1})_{i,j} 
 \\
& + \nu\, h^ 2 \dt \sum_{n=0}^{N_T-1}\sum_{i,j}  |(D_1^+(u^{n+1})_{i-1,j})^+|^2 \frac{ e^{-\la u_{i,j}^{n+1}}- e^{-\la u_{i-1,j}^{n+1}}}{ u_{i,j}^{n+1}-  u_{i-1,j}^{n+1}} \vfi_{i,j}^{n+1} 
\\
&
+ \nu\, h^ 2 \dt \sum_{n=0}^{N_T-1}\sum_{i,j}  |D_1^+(u^{n+1})_{i,j})^-|^2  \frac{e^{-\la u_{i+1,j}^{n+1}}- e^{-\la u_{i,j}^{n+1}}}{u_{i+1,j}^{n+1}- u_{i,j}^{n+1}}
\vfi_{i,j}^{n+1}
\\
& + \nu\, h^ 2 \dt \sum_{n=0}^{N_T-1}\sum_{i,j} |(D_2^+(u^{n+1})_{i,j-1})^+|^2 \frac{ e^{-\la u_{i,j}^{n+1}}- e^{-\la u_{i,j-1}^{n+1}}}{u_{i,j}^{n+1}- u_{i,j-1}^{n+1}} \vfi_{i,j}^{n+1} 
\\
&
+ \nu\, h^ 2 \dt \sum_{n=0}^{N_T-1}\sum_{i,j}  |(D_2^+(u^{n+1})_{i,j})^- |^2\, \frac{ e^{-\la u_{i,j+1}^{n+1}}- e^{-\la u_{i,j}^{n+1}}}{u_{i,j+1}^{n+1}-u_{i,j}^{n+1}}\vfi_{i,j}^{n+1}
\\ &
\qquad + h^ 2 \dt \sum_{n=0}^{N_T-1}\sum_{i,j}g(x_{i,j}, \left[\nabla_h u^{n+1}\right]_{i,j} ) e^{-\la  u_{i,j}^{n+1}}\vfi_{i,j}^{n+1}.
 \end{align*}
The first two terms in the right-hand side converge to $\nu \int_0^T \int_{\T_2}  e^{-\lambda \tilde u } D\tilde u\cdot d\phi$
by Lebesgue theorem, since $\widetilde {Du}_{h,\dt}$ converges strongly in $L^1$, $\vfi$ is smooth and $e^{-\la u_h}$ is uniformly bounded and a.e. convergent. 
As far as the remaining terms are concerned, 
using that 
$$
\frac{e^{-\la s}- e^{-\la s'}}{s-s'} \leq -\la e^{-\la \max(s,s')},
$$
and due to  (\ref{ggrow}), 
we observe that the last five terms under summation are bounded above, so that we can again apply Fatou's lemma, on account of the a.e. convergence of $u_{h, \dt}$ and $\widetilde {Du}_{h,\dt}$. Therefore, we conclude that
\begin{equation}\label{term2+3} 
\begin{split} & 
\limsup\limits_{h\to 0} \quad \nu\, h^ 2 \dt \sum_{n=0}^{N_T-1}\sum_{i,j} D_h u^{n+1}_{i,j}\cdot D_h (e^{-\la  u_{i,j}^{n+1}}\vfi(t_{n+1}, x_{i,j}))_{i,j} 
 \\
 & \qquad + h^ 2 \dt \sum_{n=0}^{N_T-1}\sum_{i,j}g(x_{i,j}, \left[\nabla_h u^{n+1}\right]_{i,j} ) e^{-\la  u_{i,j}^{n+1}}\vfi(t_{n+1}, x_{i,j})
 \\
 & \leq \nu \int_Q D\tilde u D\vfi\, e^{-\la \tilde u}\, dxdt -\nu \,\la  \int_Q |D\tilde u|^2\, e^{-\la \tilde u}\, \vfi\, dxdt + H(x,D\tilde u)e^{-\la \tilde u} \,\vfi\, dxdt\,.
\end{split}
\end{equation}
Putting together \rife{term4}-\rife{term1}-\rife{term2+3}, we deduce from \rife{whole} that $\tilde u$ satisfies
\begin{equation}\label{supers}
\begin{split}
& \frac1\la \int_Q e^{-\la \tilde u} \vfi_t\, dxdt + \frac1\la \int_{\T^2} e^{-\la u_0}\, \vfi(0)\, dx+ 
\nu \int_Q D\tilde u D\vfi\, e^{-\la \tilde u}\, dxdt 
\\
&\quad  -\nu \,\la  \int_Q |D\tilde u|^2\, e^{-\la \tilde u}\, \vfi\, dxdt + \int_QH(x,D\tilde u)e^{-\la \tilde u} \,\vfi\, dxdt \geq \int_Q F(\tilde m) e^{-\la \tilde u} \,\vfi\, dxdt 
\end{split}
\end{equation}
for every smooth $\vfi\geq 0$. In order to conclude, we need now to get rid of the exponential in the above inequality \rife{supers}. To this purpose, we first observe that 
\begin{equation}\label{expu}
e^{-\la \tilde u}\in L^2(0,T; H^1(\T^2))\cap L^\infty(Q).
\end{equation}
This can be easily proved obtaining an a  priori estimate on $e^{-\la u_{h,\dt}}$. Indeed, whenever $u$ is a grid function which solves  \rife{eql1} for some data satisfying \rife{l1bound}, we have
\begin{align*}
& \left| h^ 2 \dt \sum_{n=0}^{N_T-1}\sum_{i,j} \frac{u_{i,j}^{n+1}- u^n_{i,j}}{\dt} \, \psi(u_{i,j}^{n+1})
+ \nu\, h^ 2 \dt \sum_{n=0}^{N_T-1}\sum_{i,j} D_h u^{n+1}_{i,j}\cdot D_h \psi(u_{i,j}^{n+1}) \right| \leq C\, \|\psi\|_\infty
\end{align*}
for any bounded real function $\psi(r)$. In particular, if $\psi$ is nondecreasing, this implies
$$
h^ 2   \sum_{n=0}^{N_T-1}\sum_{i,j} \Psi(u_{i,j}^{n+1})- \Psi(u^n_{i,j})+ \nu\, h^ 2 \dt \sum_{n=0}^{N_T-1}\sum_{i,j} D_h u^{n+1}_{i,j}\cdot D_h \psi(u_{i,j}^{n+1}) \leq C\, \|\psi\|_\infty
$$
where $\Psi(s)=\int_0^s \psi(r)dr$. Thus, since $|\Psi(s)|\leq \|\psi\|_\infty |s|$, one gets
$$
\nu\, h^ 2 \dt \sum_{n=0}^{N_T-1}\sum_{i,j} D_h u^{n+1}_{i,j}\cdot D_h \psi(u_{i,j}^{n+1}) \leq C\, \|\psi\|_\infty
$$
where $C$ only depends on the $L^1$-norm of the data. This is the desired  a priori estimate; from which, using Fatou's lemma, we deduce
$$
\int_Q |D\tilde u|^2\, \psi'(\tilde u) \, dxdt \leq C \|\psi\|_\infty\,.
$$
On account of the fact that $u$ is bounded below, we can take for example $\psi(r)= 1-e^{-\mu r}$ to deduce  that $e^{-\mu \tilde u}\in L^2(0,T; H^1(\T^2))$ for any $\mu>0$. 

Thanks to \rife{expu},  inequality  \rife{supers} holds true not only for smooth functions $\vfi$ but also for $\vfi\in  H^1(Q)\cap L^\infty$, through a standard density argument. Moreover, there is no loss of generality in assuming  that $u_0\in H^1(\T^2)$, so we  extend  $\tilde u$  for negative $t$ as identically equal to $u_0$. Then, we choose
$$
\vfi(x,t) = \xi(t) \frac1h\int_{t-h}^{t} e^{\la T_k(\tilde u)(x,s)}\, ds 
$$
where $T_k(r)=\min(r,k)$ and $\xi \in C^1_c[0,T)$. 
%
Using the monotone character of $s\mapsto e^{-\la s}$ we have (see Lemma 2.3
in \cite{MR2119989})
\begin{align*}
& \limsup\limits_{h\to 0} \left\{\frac1\la \int_Q e^{-\la u} \vfi_t\, dxdt + \frac1\la \int_{\T^2} e^{-\la u_0}\, \vfi(0)\, dx\right\} 
\\
& \quad \leq - \int_Q \xi_t \int_0^u e^{-\la (r-T_k(r))}\, dr
- \int_{\T^2} \xi(0) \int_0^{u_0} e^{-\la (r-T_k(r))}\, dr\,.
\end{align*}
Moreover, $\frac1h\int_{t-h}^{t} e^{\la T_k(\tilde u)(x,s)}\, ds$ converges to $e^{\la T_k(u)}$ in $L^2(0,T;H^1(\T^2))$ and weak$-*$ 
in $L^\infty(Q)$, so we can pass to the limit as $h\to 0$ in the remaining terms of \rife{supers}. Finally, we obtain
\begin{align*}
& - \int_Q \xi_t \int_0^u e^{-\la (r-T_k(r))}\, dr
- \int_{\T^2} \xi(0) \int_0^{u_0} e^{-\la (r-T_k(r))}\, dr \\
& \quad + 
\nu \int_Q D\tilde u D\xi\, e^{-\la (\tilde u-T_k(\tilde u))}\, dxdt 
 -\nu \,\la  \int_{\{ \tilde u>k\}} |D\tilde u|^2\, e^{-\la (\tilde u-T_k(\tilde u))}\, \xi\, dxdt 
 \\
&\quad + \int_Q H(x,D\tilde u)e^{-\la (\tilde u-T_k(\tilde u))} \,\xi\, dxdt \geq \int_Q F(\tilde m) e^{-\la (\tilde u-T_k(\tilde u))}\xi \, dxdt .
\end{align*}
We conclude by letting $k\to \infty$, thanks to the dominated convergence theorem:
$$
-\int_Q \tilde u\,\xi_t\, dxdt - \int_{\T^2} u_0\, \xi(0)\, dx+\nu  \int_Q D\tilde uD\xi\, dxdt+ \int_Q H(x,D\tilde u) \xi\, dxdt \geq  \int_Q F(\tilde m) \,\xi \, dxdt \,,
$$
for every $\xi\geq 0$. Since the reverse inequality was already obtained previously, in the end, we proved that $u$ solves 
$$
-\int_Q \tilde u\,\xi_t\, dxdt - \int_{\T^2} u_0\, \xi(0)\, dx+\nu  \int_Q D\tilde uD\xi\, dxdt+ \int_Q H(x,D\tilde u) \xi\, dxdt  = \int_Q F(\tilde m) \,\xi \, dxdt
$$
for every $\xi\in C^1_c([0,T)), \xi\geq 0$, and therefore for every $\xi$. This concludes the proof that $\tilde u$ is a weak solution to the limit equation.

\subsection{Stability of the discrete Fokker-Planck  equation}

We now pass to the limit in the discrete Fokker-Planck equation.

By \rife{eq:28},  the $L^1$-compactness of $m_{h,\dt}$ and of $\widetilde {Du}_{h,\dt}$, 
we deduce the strong convergence in $L^1(Q)$ for the piecewise constant function  which takes the value $m_{i,j}^n \nabla_q g (x_{i,j},[\nabla_h u^{n+1}]_{i,j}) $  for $(t,x)$ such that  $|x_1-ih|< h/2$, $|x_2-jh|< h/2$, $t_n\le t< t_{n+1}$. Moreover, by the consistency assumption we have,
\begin{align*}
& h^2\dt \sum_{n=0}^{N_T-1}\sum_{i,j}    m_{i,j}^n \nabla_q g (x_{i,j},[\nabla_h u^{n+1}]_{i,j}) \cdot [\nabla_h \vfi]_{i,j} \to 
\\
& \qquad \to \int_Q \nabla_q g(x, D_1 \tilde u, D_1\tilde u, D_2 \tilde u, D_2 \tilde u)\cdot (D_1 \vfi, D_1\vfi, D_2 \vfi, D_2 \vfi)dxdt
\\
& \qquad\qquad \qquad = \int_Q m\, \frac{\partial H}{\partial p}(x,D\tilde u)\cdot D\vfi \,dxdt
\end{align*}
Therefore, we can  pass to the limit in the weak formulation and deduce that $m$ is a weak solution of the Fokker-Planck equation.

We notice that the regularity $m[H_p(\cdot ,D\tilde u)D\tilde u-H(\cdot,D\tilde u)]\in L^1(Q)$ follows from inequality (\ref{eq:39}), by using Fatou's lemma. Moreover, we also find that $m|H_p(\cdot ,D\tilde u)|^2\in  L^1(Q)$. The regularity $\tilde u, \tilde m\in C^0([0,T];L^1(\T^2))$ follows from properties of weak solutions,  see \cite{MR3305653}.
\\
Finally, this concludes the proof of Theorem \ref{main}.

%
%

\section*{\bf Acknowledgements}
 The first author  was partially funded  by the ANR projects ANR-12-MONU-0013 and ANR-12-BS01-0008-01.

\bibliographystyle{plain}
\bibliography{MFG}

\end{document}